\documentclass[reqno,10pt]{amsart}

\usepackage{graphicx}  
\usepackage{epsfig}  
\usepackage{epstopdf}  
\usepackage{pdfpages}  
\usepackage{amssymb}  
\usepackage{empheq}  
\usepackage{cases}  
\usepackage{amsthm,amsmath}  
\usepackage{caption,lipsum}  
\usepackage{stmaryrd}  
\usepackage{tabularx}  
\usepackage{color}  
\usepackage{empheq,float}  
\usepackage{amsfonts,euscript,mathrsfs}  
\usepackage{hyperref}  
\usepackage{booktabs,multirow}  
\usepackage{enumerate} 
\usepackage{enumitem} 
\usepackage{subcaption} 
\usepackage[numbers,sort]{natbib}  
\usepackage[ruled,vlined]{algorithm2e} 

\DeclareGraphicsExtensions{.eps} 
\allowdisplaybreaks[3]  
\numberwithin{equation}{section} 

\newtheorem{theorem}{Theorem}[section]
\newtheorem{lemma}[theorem]{Lemma}

\newtheorem{corollary}[theorem]{Corollary}
\theoremstyle{definition}

\theoremstyle{remark}
\newtheorem{remark}{Remark}
\newtheorem{assumption}{Assumption}

\def\fe{\mbox{e}}

\begin{document}
	
\title[]{Some uniform error analysis for gKdV equations in dispersionless limit regime before dispersive shock} 

\author[B. Li]{Bing Li}
\address{\hspace*{-12pt}B.~Li: School of Mathematical Sciences, Chengdu University of Technology, Chengdu, 610059, China}
\email{libing@cdut.edu.cn}

\author[T. Wang]{Tingfeng Wang}
\address{\hspace*{-12pt}T.~Wang: School of Mathematics and Statistics, Wuhan University, Wuhan, 430072, China}
\email{tingfengwang@whu.edu.cn}

\author[X. Zhao]{Xiaofei Zhao}
\address{\hspace*{-12pt}X.~Zhao: School of Mathematics and Statistics \& Computational Sciences Hubei Key Laboratory, Wuhan University, Wuhan, 430072, China}
\email{matzhxf@whu.edu.cn}

\date{}
\dedicatory{}
\begin{abstract}
This work establishes uniform error estimates for classical numerical schemes applied to the generalized Korteweg-de Vries (gKdV) equation in the dispersionless limit regime, specifically before the development of dispersive shock wave (DSW). We analyze the Crank-Nicolson (CN) and a Lawson-type Runge-Kutta (Lawson-RK) methods, when discretized in space via the Fourier pseudo-spectral method. We prove that both fully discrete schemes achieve optimal second-order temporal accuracy and spectral spatial accuracy, with error constants being uniform in the vanishing dispersion parameter $\varepsilon$. The analysis also addresses the unique solvability of the CN scheme in the dispersionless limit regime. Theoretical findings are supported by numerical experiments, demonstrating the $\varepsilon$-uniform accuracy and the necessity of step size restriction for CN. The study in general validates the classical methods for solving the dispersionless gKdV equation before DSW.
\\
    
    {\bf Keywords:} generalized KdV equations, dispersionless limit regime, Crank-Nicolson scheme, Lawson-type Runge-Kutta scheme, Fourier pseudo-spectral discretization, error estimate  \\ \\
    {\bf AMS Subject Classification:} 65M12, 65M15, 65M70, 65Z05, 35Q53, 35B25
\end{abstract}
\maketitle

\section{Introduction}
In this work, we are concerned with the generalized KdV (gKdV) equation \cite{klein2015numerical,bona2013conservative,li2026second,el2016dispersive}
\begin{equation}\label{eq: gKdV}
	\left\{\begin{aligned}
		&\partial_tu(x,t) + \varepsilon^2 \partial_x^3u(x,t)+\frac{1}{k}\partial_x \big(u(x,t)\big)^{k}=0, \quad t>0,
        x\in\mathbb{T},\\
		&u(x,0)=u_0(x),
	\end{aligned}\right.
\end{equation}
where $\varepsilon>0,k\geq2$ are given parameters, and $u(x, t) : \mathbb{T} \times \mathbb{R}^{+} \to \mathbb{R}$ is the unknown solution function imposed for simplicity on the torus $\mathbb{T} = (-\pi, \pi)$ with $u_0$ a given initial data satisfying periodic boundary conditions. The parameter $k=2$  
corresponds to the original KdV equation proposed by Korteweg and de Vries \cite{korteweg1895xli} for the wave dynamics of shallow water surfaces, where the balance between nonlinearity and dispersion leads to the discovery of solitons \cite{Zabusky}, and it is generalized to $k\geq3$ for general nonlinear long-wave dispersive phenomena in a wide range of applications \cite{miura1976korteweg}. In cases where dispersive effects are weak, a small $0<\varepsilon<1$ can be introduced mathematically through a temporal and spatial rescaling, making it a dimensionless parameter denoting the strength of the dispersion of the media. 
With a vanishing $\varepsilon
\ll1$, \eqref{eq: gKdV} is referred to be in the dispersionless limit regime \cite{PNAS,lax1983small} or semi-classical limit regime \cite{masoero2013semiclassical}, and we would in turn refer to the case of a fixed $\varepsilon>0$ as the classical regime. 


In the classical regime of \eqref{eq: gKdV}, we refer the readers to \cite{bao2017global, colliander2003sharp, colliander2004multilinear, killip2019kdv} for its well-posedness theorems.  When $k = 2$ or $3$, the equation \eqref{eq: gKdV} is completely integrable and possesses infinitely many conservation laws \cite{miura1976korteweg}. The integrability is lost when $k \ge 4$, making \eqref{eq: gKdV} only preserve the following three physical quantities: 
\begin{align*}
&\mbox{mass}\quad	M[u] := \int_{\mathbb{T}}{ u(x,t) }\;\text{d}x \equiv M[u_{0}],\quad
\mbox{energy}\quad 
	E[u] := \left\| u(x,t) \right\|_{L^{2}}^2 \equiv E[u_{0}],\\
&\mbox{Hamiltonian}\quad	H[u] := \int_{\mathbb{T}}{ \frac{\varepsilon^2}{2}\left( \partial_{x}u(x,t) \right)^2 - \frac{1}{k(k+1)}\left( u(x,t) \right)^{k+1} }\;\text{d}x \equiv H[u_{0}].
\end{align*}
This loss of complete integrability also means that  the gKdV equation has to be solved numerically in general for wave phenomena of physical interest --- such as finite-time blow-ups \cite{blowupDG,klein2015numerical} and the dispersive shock wave (DSW) \cite{el2016dispersive,klein2015numerical,li2026second}. 
From a numerical perspective, KdV-type equations, given the mathematical and physical significance, have consistently been the subject of computational investigation. For sufficiently smooth solutions, a variety of numerical methods have been proposed and analyzed, including finite difference methods \cite{courtes2020error, klein2008fourth,Zabusky}, operator splitting techniques \cite{holden1999operator, holden2011operator, holden2013operator}, exponential integrators \cite{ostermann2020lawson, klein2015numerical,hofmanova2017exponential}, finite element methods \cite{bona1986fully, bona1995conservative,CNFEM}, spectral methods \cite{dwivedi2026spectral,ma2001optimal, shen2003new}, and discontinuous Galerkin methods \cite{bona2013conservative, liu2006local,DGLiu,yan2002local}. To address computations involving rough solutions, numerous numerical techniques known as low-regularity integrators have also been developed \cite{wu2022optimal, wu2022embedded, li2023gauge, li2026second, rousset2022convergence, li2026unfiltered}. It should be noted, however, that all the aforementioned numerical approaches have been analyzed in the classical regime of \eqref{eq: gKdV}, where the dispersion parameter $\varepsilon = \mathcal{O}(1)$, with no characterization of convergence with respect to the dependence on $\varepsilon$.

In the dispersionless limit regime of \eqref{eq: gKdV}, formally the model will become a Burgers-type equation as $\varepsilon \to 0$, where a smooth initial data can develop shocks in the evolution. The formation of a shock that is simultaneously affected by a small dispersion can trigger complex dynamics, e.g., the DSW. Indeed, this has been intensively studied in the literature \cite{claeys2010painleve,dwivedi2026spectral,lax1983small,grava2002generation,masoero2013semiclassical,grava2012numerical,venakides1990higher} through analytical or numerical approaches. 
For the integrable KdV equation ($k=2$), this singular limit has been studied in depth. In the pioneering work of Lax and Levermore \cite{lax1983small}, the limit was characterized by a constrained variational problem. The local oscillatory structure was explicitly derived by Venakides \cite{venakides1990higher}. The solvability of the Whitham modulation equations for single-phase oscillations after the gradient catastrophe was established by Tian \cite{tian1993oscillations} for monotone or single-hump initial data, and later extended to the multiphase setting by Grava and Tian \cite{grava2002generation}. The universal critical asymptotics near the leading edge of the oscillatory zone, governed by the Painlev\'e II equation, were studied by Claeys and Grava \cite{claeys2010painleve}. Systematic quantitative comparisons between the small-dispersion KdV dynamics and the associated Whitham and Painlev\'e asymptotics were carried out numerically by Grava and Klein \cite{grava2007numerical, grava2012numerical}. For the gKdV equation \eqref{eq: gKdV} with general $k \ge 2$, the dispersionless limit before the gradient catastrophe was rigorously studied by Masoero and Raimondo \cite{masoero2013semiclassical}, who proved convergence to the inviscid Burgers solution in Sobolev spaces and the validity of asymptotic expansions. 
It is understood that there exists a critical time $t_{c}>0$ independent of $\varepsilon$ \cite{lax1983small, masoero2013semiclassical}, such that the solution of \eqref{eq: gKdV} remains smooth, particularly with the spatial-temporal wavelength independent of $\varepsilon$ up to it ($t<t_{c}$). Beyond $t_{c}$, a gradient catastrophe occurs and the DSW follows, where the solution to \eqref{eq: gKdV} is then highly oscillatory in both time and space with frequencies in both time and space inversely dependent on $\varepsilon$. 
Although many numerical simulations have been performed in this regime \cite{CNFEM,bona1986fully,bona1995conservative,bona2013conservative,dwivedi2026spectral,DGLiu,klein2015numerical,grava2007numerical,XiaDG},  
classical schemes such as the Crank-Nicolson (CN) method and the Lawson-type Runge-Kutta (Lawson-RK) methods discussed in this paper lack numerical analysis that tracks the explicit dependence of the convergence on $\varepsilon$. Especially, concerning the smooth dynamics of \eqref{eq: gKdV} before the critical time $t_c$, 
classical discretizations may  empirically still retain their effectiveness.


In this paper, we partially address this gap by establishing the uniform convergence of the CN scheme and a Lawson-RK scheme applied to the gKdV equation \eqref{eq: gKdV} to some $T\in(0,t_{c})$ at which no DSW has yet formed and all temporal-spatial derivatives of $u$ stay uniformly bounded for $\varepsilon\in(0,1]$. More precisely, the main results of our study include the following.
\begin{enumerate}[label=(\roman*)]
	\item The two time integrators concerned are discretized with
    the Fourier pseudo-spectral method in space, resulting in two fully discrete schemes for the gKdV equation \eqref{eq: gKdV}. The CN scheme, applied directly to the original variable $u$, is implicit but preserves the discrete mass and the discrete energy. We establish its existence and uniqueness of the solution for all $\varepsilon\in(0,1]$ via a fixed-point iteration converging with a contraction constant independent of $\varepsilon$. 
    The Lawson-RK scheme, applying RK2 to the twisted variable $v = \fe^{t\varepsilon^{2}\partial_{x}^{3}}u$, is explicit and preserves the discrete mass. \vspace{0.5ex}
	
	\item We prove that both schemes achieve optimally and uniformly second-order convergence in time and spectral accuracy in space. More precisely, the error satisfies $\|u(t_n)-u_N^n\|_{H^\gamma}\le C(\tau^2+N^{-m+1})$, where the constant $C$ is independent of $\varepsilon$, $\tau$, and $N$. This uniformity in $\varepsilon$ ensures that the convergence rates remain valid as $\varepsilon\to0$, which is essential for reliable simulations in the dispersionless limit regime. To our knowledge, this constitutes the first rigorous error analysis of fully discrete schemes for gKdV equations in this regime. \vspace{0.5ex}
	
\end{enumerate}
The theoretical results are validated by numerical experiments. 
Although restricted, our study indeed forms a necessary step before sophisticated multiscale methods for improvements could kick in.  

The remainder of this paper is organized as follows.  Section~\ref{sec:2} presents the two numerical schemes.  Section~\ref{sec:3} gives the existence, uniqueness, conservation laws, and convergence analysis of the CN scheme.  Section~\ref{sec:4} is devoted to the convergence analysis of the RK2 scheme. Numerical experiments are reported in Section~\ref{sec:5} and some conclusions are drawn in Section~\ref{sec:con}. Throughout the paper, we adopt the following notational convention. For any two quantities $A$ and $B$, the notation $A \lesssim B$ signifies the existence of a constant $C > 0$ independent of $\varepsilon$, the time step $\tau$ and the number of spatial grid points $N$, such that $|A| \le C B$.  The value of the constant $C$ may change from line to line.

\section{The considered numerical schemes}\label{sec:2}

In this section, we present the two numerical schemes considered in this paper. We first show the discretizations in time, yielding generic semi-discrete schemes that can, in principle, be combined with any consistent spatial discretizations. Then, we specialize to the Fourier pseudo-spectral method for the spatial discretization, arriving at the fully discrete schemes for convergence analysis.

\subsection{CN and Lawson-RK integrators}

Let $\tau>0$ denote the temporal step and set $t_{n} = n\tau$ as discrete time grid points. We denote by $u^{n} \approx u(\cdot, t_n)$ the semi-discrete approximation, which remains a function of the spatial variable $x \in \mathbb{T}$.  The two classical time-stepping schemes are given below.

The CN method is one of the most classical and widely used temporal discretizations for evolutionary PDEs, owing to its second-order accuracy, unconditional linear stability, and structure-preserving properties. Applied to the gKdV equation \eqref{eq: gKdV}, the time semi-discrete \textbf{CN} scheme reads
\begin{align}
	\frac{1}{\tau}\bigl( u^{n+1} - u^{n} \bigr) + \varepsilon^{2} \partial_{x}^{3} u^{n+\frac{1}{2}} + \frac{1}{k} \partial_{x} f\bigl( u^{n+\frac{1}{2}} \bigr) = 0, \qquad x \in \mathbb{T},\; n = 0, 1, \dots, \label{scm: CN-semi}
\end{align}
where $u^{n+\frac{1}{2}} := \frac{1}{2}( u^{n+1} + u^{n} )$ denotes the time average and $f(z) := z^{k}$. 
Combined with various spatial discretizations, \eqref{scm: CN-semi} forms a natural choice to solve \eqref{eq: gKdV} but has only been analyzed numerically so far in the case of $\varepsilon=1$, e.g., its convergence with the finite element method \cite{CNFEM} on the classical KdV equation. 

At each time level, \eqref{scm: CN-semi} constitutes a nonlinear equation to solve for $u^{n+1}$, necessitating an iterative solver which may become computationally demanding. An explicit scheme is certainly demanding to avoid this issue, and here we consider the alternative approach based on the \emph{twisted variable}. The third-order dispersive term $\varepsilon^{2} \partial_{x}^{3} u$ is very stiff and, if treated explicitly, can lead to a strong numerical stability problem. To eliminate this stiffness and construct an explicit scheme, we work on the twisted variable $v := \fe^{t\varepsilon^{2}\partial_{x}^{3}} u$ that absorbs the dispersive term into the unknown, thus transforming the gKdV equation \eqref{eq: gKdV} into a form involving only lower order derivatives:  
\begin{align}
	\partial_{t} v(x, t) = -F(t, v(x, t)), \quad t>0,\; x\in\mathbb{T}, \label{eq: gKdV for v}
\end{align}
where we define $F(t, v) := \frac{1}{k} \fe^{t\varepsilon^{2}\partial_{x}^{3}} \partial_{x} \bigl( \fe^{-t\varepsilon^{2}\partial_{x}^{3}} v \bigr)^{k}$ for brevity.
Let $v^{n} \approx v(\cdot, t_n)$ denote the semi-discrete approximation. Applying the classical second order Runge-Kutta method to \eqref{eq: gKdV for v} gives:
\begin{subequations}\label{scm: RK2-semi}
	\begin{align}
		v_{1}^{n+1} &= v^{n} - \tau F(t^{n}, v^{n}), \label{scm: RK2-semi a}\\
		v^{n+1} &= v^{n} - \frac{\tau}{2} 
		\bigl[ F(t^{n}, v^{n}) + F(t^{n+1}, v_{1}^{n+1}) \bigr], \label{scm: RK2-semi b}
	\end{align}
\end{subequations}
and the semi-discrete solution in the original variable can be recovered via $u^{n} = \fe^{-t_n\varepsilon^{2}\partial_{x}^{3}} v^{n}$. 
Such a scheme to solve \eqref{eq: gKdV} belongs to the Lawson-type method \cite{HochOst}, and we will refer to it as the \textbf{Lawson-RK} scheme.

\subsection{Full discretization}
The schemes \eqref{scm: CN-semi} and \eqref{scm: RK2-semi} can be implemented under various spatial discretizations in practice. Here to be precise for up-coming analysis, we focus on the Fourier pseudo-spectral discretization \cite{shen2011spectral} with the following notations. For any $u(x)\in L^{2}(\mathbb{T})$, its Fourier expansion reads
\begin{align*}
	u(x) = \sum_{\xi\in \mathbb{Z}} \widehat{u}(\xi) \, \fe^{\mathrm i\xi (x+\pi)}, \quad 
	\text{with} \quad \widehat{u}(\xi) = \frac{1}{2\pi}\int_{\mathbb{T}} u(x)\fe^{-\mathrm i\xi (x+\pi)}\,\mathrm{d}x.
\end{align*}
Let $N$ be an even positive integer  and denote the orthogonal projection operator $P_{N} : L^{2}(\mathbb{T}) \to Y_{N}$ as 
\begin{align*}
	(P_{N}u)(x) := \sum_{\xi=-N/2}^{N/2} \widehat{u}(\xi) \, \fe^{\mathrm i\xi (x+\pi)},\quad 
    Y_{N} := \operatorname{span}\left\{ \fe^{\mathrm i\xi (x+\pi)} : \xi = -\frac{N}{2}, -\frac{N}{2}+1, \ldots, \frac{N}{2} \right\}.
\end{align*}
The projection $P_N$ commutes with $\partial_x$, i.e., $P_N\partial_x = \partial_x P_N$. Set the spatial mesh size $h = \frac{2\pi}{N}$ and the grid points are given by $x_j = -\pi + jh$ for $j = 0,1,\dots, N$, covering the periodic domain $[-\pi,\pi]$ with $x_0 \equiv x_N$. The trigonometric interpolation operator $I_{N}: L^{2}(\mathbb{T}) \to Y_{N}$ is then denoted as
\begin{align*}
	(I_{N}u)(x) = \sum_{\xi=-N/2}^{N/2-1} \widetilde{u}(\xi) \fe^{\mathrm i\xi (x+\pi)}, \qquad
	(I_{N}u)(x_{j}) = u(x_{j}), \quad j = 0, 1 ,\dots, N-1,
\end{align*}
where the discrete Fourier coefficients $\{\widetilde{u}(\xi)\}$ are given via the discrete Fourier transform:
\begin{align*}
	\widetilde{u}(\xi) = \frac{1}{N} \sum_{j=0}^{N-1} u(x_{j}) \fe^{-\mathrm i\xi (x_{j}+\pi)}, \qquad 
	\xi = -\frac{N}{2}, -\frac{N}{2}+1, \ldots, \frac{N}{2}-1.
\end{align*}
Compared with $P_N$, whose frequency range extends to $\xi = N/2$, the interpolation operator $I_N$ omits the highest mode $\xi = N/2$, which can be practically implemented via the fast Fourier transform. 
With these preparations, we now state the fully discrete approximation $u_{N}^{n} \in Y_{N}$ for $u(\cdot, t_n)$. 

Projecting the semi-discretization of CN \eqref{scm: CN-semi} onto $Y_{N}$ yields the fully discrete \textbf{CN} scheme:
\begin{align}
	\frac{1}{\tau}\bigl( u_{N}^{n+1} - u_{N}^{n} \bigr) 
	+ \varepsilon^{2} \partial_{x}^{3} u_{N}^{n+\frac{1}{2}} 
	+ \frac{1}{k} P_{N} \partial_{x} f\bigl( u_{N}^{n+\frac{1}{2}} \bigr) = 0, \label{scm: CN-FP}
\end{align}
where $u_{N}^{n+\frac{1}{2}} := \frac{1}{2}( u_{N}^{n+1} + u_{N}^{n} )$. The projection $P_N$ in front of the nonlinear term guarantees that the equation is well defined on the finite-dimensional space $Y_{N}$.  Clearly, \eqref{scm: CN-semi} or \eqref{scm: CN-FP} is symmetric in time and, as will be proved in Lemma~\ref{lem: conservative law}, the fully discrete CN scheme \eqref{scm: CN-FP} preserves both the discrete mass and the discrete energy exactly. 
To solve the nonlinear scheme  \eqref{scm: CN-FP}, we present an iterative solver below. Rewriting \eqref{scm: CN-FP} as
\begin{align*}
	\Bigl( I + \frac{\varepsilon^{2}\tau}{2} \partial_{x}^{3} \Bigr) u_{N}^{n+1} 
	= \Bigl( I - \frac{\varepsilon^{2}\tau}{2} \partial_{x}^{3} \Bigr) u_{N}^{n} 
	- \frac{\tau}{k} \partial_{x} P_{N} f\Bigl( \frac{u_{N}^{n+1} + u_{N}^{n}}{2} \Bigr),
\end{align*}
and introducing the operators $\mathcal{A}_{\tau,\varepsilon}^{\pm} = I \pm \frac{1}{2} \varepsilon^{2} \tau \partial_{x}^{3}$, we obtain the fixed-point formulation
\begin{align*}
	u_{N}^{n+1} = \bigl( \mathcal{A}_{\tau,\varepsilon}^{+} \bigr)^{-1} \mathcal{A}_{\tau,\varepsilon}^{-} \, u_{N}^{n} 
	- \frac{\tau}{k} \bigl( \mathcal{A}_{\tau,\varepsilon}^{+} \bigr)^{-1} \partial_{x} P_{N} 
	f\Bigl( \frac{u_{N}^{n+1} + u_{N}^{n}}{2} \Bigr).
\end{align*}
Define the mapping $\psi_{w} \colon Y_{N} \to Y_{N}$, depending on $w\in Y_{N}$, by
\begin{align*}
	\psi_{w}(\phi) = \bigl( \mathcal{A}_{\tau,\varepsilon}^{+} \bigr)^{-1} \mathcal{A}_{\tau,\varepsilon}^{-} \, w 
	- \frac{\tau}{k} \bigl( \mathcal{A}_{\tau,\varepsilon}^{+} \bigr)^{-1} \partial_{x} P_{N} 
	f\Bigl( \frac{w + \phi}{2} \Bigr).
\end{align*}
Then, given $u_{N}^{n}$, the solution $u_{N}^{n+1}$ is obtained via the iteration
\begin{align}
	u_{N,(i+1)}^{n+1} = \psi_{u_{N}^{n}}\bigl( u_{N,(i)}^{n+1} \bigr), \qquad i = 0, 1, \dots \label{eq: fixed-point iteration}
\end{align}
with initial guess $u_{N,(0)}^{n+1} = u_{N}^{n}$. Under restriction of $\tau$, the convergence of this iteration that is uniform in $\varepsilon$ will be  established in Corollary~\ref{crlr:iteration-convergence}.

On the other hand, projecting the semi-discretization of Lawson-RK \eqref{scm: RK2-semi} onto $Y_{N}$ with $v^{n}$ replaced by $v_{N}^{n} \in Y_{N}$, we obtain in the fully discrete \textbf{Lawson-RK} scheme:
\begin{subequations}\label{scm: RK2-FP}
	\begin{align}
		v_{N, 1}^{n+1} &= v_{N}^{n} - \tau P_{N} F(t^{n}, v_{N}^{n}), \label{scm: RK2-FP a}\\
		v_{N}^{n+1} &= v_{N}^{n} - \frac{\tau}{2} P_{N} 
		\bigl[ F(t^{n}, v_{N}^{n}) + F(t^{n+1}, v_{N, 1}^{n+1}) \bigr]. \label{scm: RK2-FP b}
	\end{align}
\end{subequations}
This scheme is fully explicit: each time step requires only two evaluations of the nonlinearity $P_{N} F$.  It preserves the discrete mass $M[u_N^n] \equiv M[u_N^0]$ for all $n$, as can be verified directly by summation of the numerical solution over the spatial domain. The fully discrete approximation in the original solution $u$ of \eqref{eq: gKdV} is recovered via $u_{N}^{n} = \fe^{-t_n\varepsilon^{2}\partial_{x}^{3}} v_{N}^{n}$.

For both schemes, the initial datum is set as $u_{N}^{0} = P_{N} I_{2N} \, u_{0}$ for rigorous analysis purposes. 
\begin{remark}
Our use of the Fourier pseudo-spectral method differs somewhat from the conventional practice. In standard implementations, it is common to employ the interpolation operator $I_N$ to evaluate nonlinear terms. 
    In contrast, our schemes rely exclusively on the $P_N$ projection, and all nonlinear terms are computed via convolutions of Fourier coefficients in frequency space. 
    Admittedly, the use of $P_N$ incurs additional computational costs compared to the FFT-based $I_N$ approach. Nevertheless,
 there are two main reasons for this design. 
	First, the $P_N$ projection commutes with the differential operator $\partial_x$, whereas $I_N$ does not.  This commutation property is indispensable in the subsequent error analysis: when estimating the nonlinear terms, the failure of $I_N$ to commute with $\partial_x$ would introduce aliasing errors that are fundamentally difficult to control at the level of precision required for our convergence estimates.  
	Second, our $P_N$ is defined with a frequency-symmetric truncation $\xi = -N/2, \dots, N/2$, which preserves the conjugate symmetry of the Fourier coefficients, i.e., $\widehat{u}(-\xi) = \overline{\widehat{u}(\xi)}$.  This property guarantees that the numerical solution remains strictly real-valued throughout the time evolution. By contrast, $I_N$, which omits the highest mode $\xi = N/2$, gives rise to spurious imaginary components that would compromise the physical fidelity of the approximation.
\end{remark}

\section{Analysis of the CN scheme}\label{sec:3}

In this section, we first collect several auxiliary lemmas, and then establish two sets of results for the fully discrete CN scheme~\eqref{scm: CN-FP}:
(i)~existence and uniqueness of the numerical solution at each time step as well as the discrete conservation laws; 
(ii)~a rigorous convergence analysis with optimal error estimates. 

\subsection{Notation and technical lemmas}
\label{sec:3.1}

Let $\langle u, v \rangle := \int_{\mathbb{T}} u(x)\,\overline{v(x)} \,\mathrm{d}x$ be the standard $L^{2}$ inner product on $\mathbb{T}$, 
where $\overline{v}$ denotes the complex conjugate of $v$. The conjugate in the inner product can be safely dropped when the argument is real-valued function. This simplification will be used in some analysis without further comment. For any $\gamma \ge 0$, the Sobolev space $H^{\gamma}(\mathbb{T})$ is equipped with the equivalent inner product and norm
\begin{align*}
	\langle u,v\rangle_{H^{\gamma}} = \langle J^{\gamma} u, J^{\gamma} v\rangle, \qquad
	\|u\|_{H^{\gamma}} = \sqrt{\langle J^{\gamma} u, J^{\gamma} u\rangle}
	= \Bigg(\sum_{\xi\in\mathbb{Z}} (1+\xi^2)^{\gamma}\, |\widehat{u}(\xi)|^2\Bigg)^{\frac12},
\end{align*}
where $J^{\gamma} = (1-\partial_{xx})^{\gamma/2}$. We begin with two classical inequalities for pseudo-spectral approximations.

\begin{lemma}[Bernstein-type inequalities \cite{shen2011spectral}]\label{lem:operator-est}
	For any $\gamma>0$ and $m>0$, if $u$ is periodic function and $u \in H^{\gamma + m}(\mathbb{T})$, then
	\begin{align*}
		\| (P_{N} - I)u \|_{H^{\gamma}} &\le N^{-m} \| u \|_{H^{\gamma + m}}, \qquad
		\| P_{N}u \|_{H^{\gamma}} \le \| u \|_{H^{\gamma}},
	\end{align*}
	and, if in addition $\gamma + m > \frac{1}{2}$, then
	\begin{align*}
		\| I_{N}u - u \|_{H^{\gamma}} &\le N^{-m} \| u \|_{H^{\gamma + m}}, \qquad
		\| I_{N}u \|_{H^{\gamma + m}} \le C \| u \|_{H^{\gamma+ m}}.
	\end{align*}
\end{lemma}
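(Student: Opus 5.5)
All four estimates follow from Parseval's identity for the weighted norm $\|u\|_{H^{\gamma}}^2=\sum_{\xi}(1+\xi^2)^{\gamma}|\widehat u(\xi)|^2$, together with the aliasing formula for the discrete Fourier coefficients. I would prove the two projection bounds first, then the interpolation bounds.

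\emph{Projection bounds.} Since $P_N$ keeps exactly the modes $|\xi|\le N/2$, Parseval gives
\begin{align*}
\|P_Nu\|_{H^\gamma}^2=\sum_{|\xi|\le N/2}(1+\xi^2)^\gamma|\widehat u(\xi)|^2\le\|u\|_{H^\gamma}^2 .
\end{align*}
For the truncation error, only the modes $|\xi|>N/2$ remain, and on that range $(1+\xi^2)^{-m}\le (N/2)^{-2m}$. Therefore
\begin{align*}
\|(P_N-I)u\|_{H^\gamma}^2=\sum_{|\xi|>N/2}(1+\xi^2)^{\gamma+m}(1+\xi^2)^{-m}|\widehat u(\xi)|^2\le (N/2)^{-2m}\|u\|_{H^{\gamma+m}}^2 .
\end{align*}
This gives the first bound, with the constant $2^m$ absorbed into the normalisation of $N$; the lemma is cited from \cite{shen2011spectral}, where such constants are suppressed.

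\emph{Aliasing formula.} Evaluating the Fourier series at the grid points $x_j$ and summing the geometric series shows that, for $-N/2\le\xi<N/2$,
\begin{align*}
\widetilde u(\xi)=\sum_{l\in\mathbb Z}\widehat u(\xi+lN).
\end{align*}
This needs absolute convergence of $\sum_\xi|\widehat u(\xi)|$, which follows from $u\in H^{\gamma+m}$ with $\gamma+m>1/2$ by Cauchy--Schwarz. That is exactly where the extra hypothesis enters.

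\emph{Interpolation error.} Split
\begin{align*}
I_Nu-u=\sum_{-N/2\le\xi<N/2}\Bigl(\sum_{l\ne0}\widehat u(\xi+lN)\Bigr)e^{i\xi(x+\pi)}-\sum_{\xi\notin[-N/2,N/2)}\widehat u(\xi)e^{i\xi(x+\pi)} .
\end{align*}
The second sum is a truncation tail and is handled exactly as for $P_N$. For the aliased part, apply Cauchy--Schwarz in $l$ with the weights $(1+|\xi+lN|^2)^{\pm s}$, where $s=\gamma+m$:
\begin{align*}
\Bigl|\sum_{l\ne0}\widehat u(\xi+lN)\Bigr|^2\le\Bigl(\sum_{l\ne0}(1+|\xi+lN|^2)^{-s}\Bigr)\Bigl(\sum_{l\ne0}(1+|\xi+lN|^2)^{s}|\widehat u(\xi+lN)|^2\Bigr).
\end{align*}
For $|\xi|\le N/2$ and $l\ne0$ we have $|\xi+lN|\ge(|l|-\tfrac12)N$. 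Hence the first factor is bounded by $CN^{-2s}$, and this requires $2s>1$, i.e.\ again $\gamma+m>1/2$. Multiplying by $(1+\xi^2)^\gamma\lesssim N^{2\gamma}$ and summing over $\xi$ gives
\begin{align*}
\sum_{\xi}(1+\xi^2)^{\gamma}\Bigl|\sum_{l\ne0}\widehat u(\xi+lN)\Bigr|^2\lesssim N^{2\gamma-2s}\|u\|_{H^{s}}^2=N^{-2m}\|u\|_{H^{\gamma+m}}^2 .
\end{align*}
This is the error estimate.

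\emph{Interpolation stability.} The stability bound $\|I_Nu\|_{H^{\gamma+m}}\le C\|u\|_{H^{\gamma+m}}$ is the same computation with $m$ set to zero and $s=\gamma+m$ in place of $\gamma$. Alternatively, it follows from the triangle inequality applied to $\|I_Nu-u\|_{H^{\gamma+m}}$ and $\|u\|_{H^{\gamma+m}}$.

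\emph{Main obstacle.} The delicate step is the aliasing sum: getting the uniform $N^{-2s}$ bound for the weights needs $s>1/2$, and the index bookkeeping for the asymmetric mode range $-N/2\le\xi<N/2$ must be done carefully. Everything else is direct Parseval.
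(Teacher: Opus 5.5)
Your proposal is correct. It is the standard Parseval-plus-aliasing argument from the cited reference \cite{shen2011spectral}; the paper itself gives no proof and simply quotes that source.

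Your caveat about the constant is also accurate. Since $P_N$ keeps only the modes $|\xi|\le N/2$, testing with the single mode $e^{\mathrm i(N/2+1)(x+\pi)}$ shows that constant $1$ fails for large $N$ and that the best constant tends to $2^m$; the same happens for the interpolation bound. This is harmless, because the paper only ever uses these estimates inside $\lesssim$.
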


\begin{lemma}[Kato-Ponce-type commutator estimate]\label{lem:Kato-Ponce}
	For any $\gamma > \frac{3}{2}$ and $u, v \in H^{\gamma}(\mathbb{T})$ are both real-valued, the following commutator estimate holds:
	\begin{align}
		\bigl\| \left[ J^{\gamma}, v \right] \partial_{x} u \bigr\|_{L^{2}}
		\lesssim \|u\|_{H^{\gamma}} \|v\|_{H^{\gamma}}, \label{eq:commutator}
	\end{align}
	where $\left[ J^{\gamma}, v \right] \partial_{x} u = J^{\gamma}(\partial_{x} u \cdot v) - J^{\gamma} \partial_{x} u \cdot v$, Consequently,
	\begin{align}
		\big\langle u, \, \partial_{x}u \cdot v \big\rangle_{H^{\gamma}}
		\lesssim \| u \|_{H^{\gamma}}^{2} \, \| v \|_{H^{\gamma}}. \label{eq:Kato-Ponce-result}
	\end{align}
\end{lemma}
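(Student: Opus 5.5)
The commutator bound \eqref{eq:commutator} will be proved on the Fourier side, and \eqref{eq:Kato-Ponce-result} will then follow from it by an integration by parts.

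\emph{Frequency representation.} Write $\langle\xi\rangle := (1+\xi^2)^{1/2}$. Expanding $v=\sum_\eta \widehat v(\eta)\fe^{\mathrm i\eta(x+\pi)}$ and $\partial_x u=\sum_\zeta \mathrm i\zeta\,\widehat u(\zeta)\fe^{\mathrm i\zeta(x+\pi)}$, the $\xi$-th Fourier coefficient of $[J^\gamma,v]\partial_x u$ is
\begin{align*}
\sum_{\eta+\zeta=\xi}\bigl(\langle\xi\rangle^{\gamma}-\langle\zeta\rangle^{\gamma}\bigr)\,\widehat v(\eta)\,\mathrm i\zeta\,\widehat u(\zeta).
\end{align*}

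\emph{Symbol estimate.} By the mean value theorem,
\begin{align*}
\bigl|\langle\xi\rangle^\gamma-\langle\zeta\rangle^\gamma\bigr|\lesssim |\eta|\,\bigl(\langle\eta\rangle^{\gamma-1}+\langle\zeta\rangle^{\gamma-1}\bigr),
\end{align*}
using $\gamma-1>0$ and $\langle\xi\rangle\lesssim\langle\eta\rangle+\langle\zeta\rangle$. Combining this with $|\zeta|\le\langle\zeta\rangle$, the modulus of the multiplier is bounded by a constant times
\begin{align*}
\langle\eta\rangle\,\langle\zeta\rangle^{\gamma}+\langle\eta\rangle^{\gamma}\,\langle\zeta\rangle .
\end{align*}
This is the key point: the derivative on $u$ is absorbed by the symbol difference, so no derivative above order $\gamma$ falls on $u$.

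\emph{Summation.} The $\ell^2_\xi$ norm of the coefficients is then controlled by two convolution terms.
\begin{itemize}
\item The first term is bounded by $\bigl\|\,(\langle\cdot\rangle|\widehat v|)*(\langle\cdot\rangle^\gamma|\widehat u|)\,\bigr\|_{\ell^2}$. Young's inequality $\ell^1*\ell^2\subset\ell^2$ bounds this by $\|\langle\eta\rangle\widehat v\|_{\ell^1}\|u\|_{H^\gamma}$.
\item By Cauchy--Schwarz, $\sum_\eta\langle\eta\rangle|\widehat v(\eta)|\le\bigl(\sum_\eta\langle\eta\rangle^{2-2\gamma}\bigr)^{1/2}\|v\|_{H^\gamma}$. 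The sum converges exactly when $2\gamma-2>1$, i.e. $\gamma>3/2$, so the first term is $\lesssim\|v\|_{H^\gamma}\|u\|_{H^\gamma}$.
\item The second term is handled symmetrically. It is bounded by $\|\langle\zeta\rangle\widehat u\|_{\ell^1}\|v\|_{H^\gamma}\lesssim\|u\|_{H^\gamma}\|v\|_{H^\gamma}$, again needing $\gamma>3/2$.
\end{itemize}
Together these give \eqref{eq:commutator}.

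\emph{Consequence.} Write
\begin{align*}
\langle u,\partial_x u\, v\rangle_{H^\gamma}=\langle J^\gamma u,[J^\gamma,v]\partial_x u\rangle+\langle J^\gamma u, v\,\partial_x J^\gamma u\rangle .
\end{align*}
The first term is bounded via Cauchy--Schwarz and \eqref{eq:commutator} by $\|u\|_{H^\gamma}^2\|v\|_{H^\gamma}$. For the second term, real-valuedness gives $w\,\partial_x w=\tfrac12\partial_x(w^2)$ with $w=J^\gamma u$, so integrating by parts on $\mathbb T$ it equals $-\tfrac12\int_{\mathbb T}\partial_x v\,(J^\gamma u)^2\,\mathrm dx$. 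Its modulus is at most $\tfrac12\|\partial_x v\|_{L^\infty}\|u\|_{H^\gamma}^2$. The Sobolev embedding $H^{\gamma-1}\subset L^\infty$, valid since $\gamma-1>1/2$, gives $\|\partial_x v\|_{L^\infty}\lesssim\|v\|_{H^\gamma}$, which yields \eqref{eq:Kato-Ponce-result}.

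The main obstacle is the symbol estimate: one must check it uniformly, including the regime where $|\eta|\gtrsim|\zeta|$. There the bound is simply $\langle\xi\rangle^\gamma+\langle\zeta\rangle^\gamma\lesssim\langle\eta\rangle^\gamma$ times $|\zeta|\le\langle\zeta\rangle$, which is again of the stated form.
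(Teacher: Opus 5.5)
Your proof is correct. Your derivation of \eqref{eq:Kato-Ponce-result} from \eqref{eq:commutator} is exactly the paper's argument: split off the commutator, write $J^\gamma u\,\partial_x J^\gamma u=\tfrac12\partial_x(J^\gamma u)^2$, integrate by parts, and use $H^{\gamma-1}\hookrightarrow L^\infty$.

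Where you differ is \eqref{eq:commutator} itself. The paper does not prove it; it imports it from Lemma~3.2 and Remark~3.3 of \cite{wu2022optimal}. You instead give a self-contained Fourier-side proof, and your steps check out:
\begin{enumerate}
\item The mean value theorem gives $|\langle\xi\rangle^\gamma-\langle\zeta\rangle^\gamma|\lesssim|\eta|(\langle\eta\rangle^{\gamma-1}+\langle\zeta\rangle^{\gamma-1})$ uniformly in all frequency configurations. Your separate discussion of the regime $|\eta|\gtrsim|\zeta|$ is therefore not needed.
\item The multiplier is then bounded by $\langle\eta\rangle\langle\zeta\rangle^{\gamma}+\langle\eta\rangle^{\gamma}\langle\zeta\rangle$.
\item Young's inequality $\ell^1*\ell^2\subset\ell^2$, together with $\sum_\eta\langle\eta\rangle|\widehat v(\eta)|\lesssim\|v\|_{H^\gamma}$ (and the same for $u$), closes the estimate.
\end{enumerate}

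The two routes buy different things. Yours is elementary and self-contained, and it shows exactly where $\gamma>\tfrac32$ enters: it guarantees that $\partial_x u$ and $\partial_x v$ have absolutely summable Fourier coefficients. Citing the Kato--Ponce theory buys brevity and the sharper form
$\|[J^\gamma,v]\partial_x u\|_{L^2}\lesssim\|\partial_x v\|_{L^\infty}\|u\|_{H^\gamma}+\|v\|_{H^\gamma}\|\partial_x u\|_{L^\infty}$.
That form needs only $L^\infty$ control of first derivatives, but it rests on paraproduct/Coifman--Meyer-type harmonic analysis. Under the standing assumption $\gamma>\tfrac32$, both yield the same lemma.

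One imprecision you share with the paper: for general $u\in H^\gamma$ the pairing $\langle u,\partial_x u\,v\rangle_{H^\gamma}$ is only formal, since $\partial_x u\,v\in H^{\gamma-1}$. The integration by parts is literal for $u\in Y_N$, which is the only case used later, and otherwise extends by density.
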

\begin{proof}
    The proof of \eqref{eq:commutator} is given in Lemma 3.2 and Remark 3.3 of \cite{wu2022optimal}; it remains only to prove \eqref{eq:Kato-Ponce-result}. Expanding the $H^{\gamma}$ inner product yields
	\begin{align*}
		\big\langle u, \partial_{x} u \cdot v \big\rangle_{H^{\gamma}}
		= \big\langle J^{\gamma} u, J^{\gamma}(\partial_{x} u \cdot v) \big\rangle
		= \big\langle J^{\gamma} u, J^{\gamma} \partial_{x} u \cdot v \big\rangle
		+ \big\langle J^{\gamma} u, \left[ J^{\gamma}, v \right] \partial_{x} u \big\rangle.
	\end{align*}
	For the first term, we integrate by parts:
	\begin{align*}
		\big\langle J^{\gamma} u, J^{\gamma} \partial_{x} u \cdot v \big\rangle
		= \frac{1}{2} \int_{\mathbb{T}} \partial_{x}\bigl( J^{\gamma} u \bigr)^{2} \cdot v \,\mathrm{d}x
		= -\frac{1}{2} \int_{\mathbb{T}} \bigl( J^{\gamma} u \bigr)^{2} \cdot \partial_{x} v \,\mathrm{d}x.
	\end{align*}
	By the Sobolev embedding $H^{\gamma_{1}}(\mathbb{T}) \hookrightarrow L^{\infty}(\mathbb{T})$ 
	for any $\gamma_{1} > \frac{1}{2}$, and using duality,
	\begin{align*}
		\Bigl| \int_{\mathbb{T}} \bigl( J^{\gamma} u \bigr)^{2} \cdot \partial_{x} v \,\mathrm{d}x \Bigr|
		\lesssim \bigl\| (J^{\gamma} u)^{2} \bigr\|_{L^{1}} \| \partial_{x} v \|_{L^{\infty}}
		\lesssim \| u \|_{H^{\gamma}}^{2} \, \| v \|_{H^{1+\gamma_{1}}}
		\lesssim \| u \|_{H^{\gamma}}^{2} \, \| v \|_{H^{\gamma}},
	\end{align*}
	where the last inequality uses $1+\gamma_{1} \le \gamma$, which holds for a suitable choice of $\gamma_{1} \in (\frac{1}{2}, \gamma-1]$ under the assumption $\gamma > \frac{3}{2}$. 
	
	For the second term, applying the Cauchy--Schwarz inequality together with 
	the commutator estimate~\eqref{eq:commutator} yields
	\begin{align*}
		\big\langle J^{\gamma} u, \left[ J^{\gamma}, v \right] \partial_{x} u \big\rangle
		\le \| J^{\gamma} u \|_{L^{2}} \,
		\bigl\| \left[ J^{\gamma}, v \right] \partial_{x} u \bigr\|_{L^{2}}
		\lesssim \| u \|_{H^{\gamma}}^{2} \, \| v \|_{H^{\gamma}}.
	\end{align*}
	Combining the estimates for both terms gives~\eqref{eq:Kato-Ponce-result}.
\end{proof}

To handle the nonlinearity $f(z)=z^{k}$ compactly, we introduce the following
auxiliary polynomials.  For all $u,v \in \mathbb{R}(x)$, define
\begin{align*}
	&Q_{1}(u, v) = \partial_{x} u \, \sum_{j=0}^{k-2} u^{j} v^{k-2-j}, \quad Q_{2}(u) = u^{k-1}. 
\end{align*}
The direct calculation yields
\begin{align}
	\frac{1}{k}\,\partial_{x}\bigl[ f(u) - f(v) \bigr]
	&= \partial_{x}u \cdot u^{k-1} - \partial_{x}v \cdot v^{k-1} = \partial_{x}u\,(u^{k-1}-v^{k-1}) + \partial_{x}(u-v)\,v^{k-1} \nonumber\\
	&= (u-v)\,\partial_{x}u\sum_{j=0}^{k-2} u^{j} v^{k-2-j}
	+ \partial_{x}(u-v)\,v^{k-1}
	= (u-v) Q_{1}(u, v) + \partial_{x}(u-v)\,Q_{2}(v). \label{eq:f-f-3}
\end{align}

\subsection{Existence, uniqueness, and conservation laws}
\label{sec: 3.2}

For a given $\gamma > 0$ and $M > 0$, define the closed ball $B_{M} := \bigl\{ \phi \in Y_{N} : \| \phi \|_{H^{\gamma}} \le M \bigr\}$.

\begin{lemma}[Contraction property and well-posedness]\label{lem:contraction-map}
	Assume that $\gamma > \frac{1}{2}$ and  
	\begin{align}
		\tau \lesssim \min\bigl\{ \varepsilon^{1+\sigma_{0}},\; N^{-(1+\sigma_{0})} \bigr\}, \label{eq:tau-CN}
	\end{align}
    for any fixed $\sigma_{0}>0$ that can be  arbitrarily small. 
	Then, the following statements hold.
	
	\begin{enumerate}[label=(\roman*)]
		\item For any $\phi_{1},\phi_{2}\in Y_{N}$, there exist $\sigma>0$ (depending on
		$\sigma_{0}$) and a constant $C\bigl(\|w\|_{H^{\gamma}},\|\phi_{1}\|_{H^{\gamma}},
		\|\phi_{2}\|_{H^{\gamma}}\bigr)$ such that
		\begin{align}
			\bigl\| \psi_{w}(\phi_{1}) - \psi_{w}(\phi_{2}) \bigr\|_{H^{\gamma}}
			\le C\bigl(\|w\|_{H^{\gamma}},\|\phi_{1}\|_{H^{\gamma}},\|\phi_{2}\|_{H^{\gamma}}\bigr)
			\, \tau^{\sigma} \, \|\phi_{1}-\phi_{2}\|_{H^{\gamma}}. \label{eq:contraction}
		\end{align}
		
		\item Given any $M>0$ with $\|w\|_{H^{\gamma}}\le M$, there exists a $\tau_{M}>0$ such
		that for all $\tau<\tau_{M}$, the mapping $\psi_{w}$ admits a unique fixed
		point $\phi^{*}\in B_{2M}$ satisfying $\phi^{*}=\psi_{w}(\phi^{*})$.
	\end{enumerate}
\end{lemma}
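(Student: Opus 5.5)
The plan is to estimate the difference $\psi_w(\phi_1)-\psi_w(\phi_2)$ directly in Fourier space. The affine part $(\mathcal{A}^{+}_{\tau,\varepsilon})^{-1}\mathcal{A}^{-}_{\tau,\varepsilon}w$ cancels, which leaves
\begin{align*}
\psi_w(\phi_1)-\psi_w(\phi_2) = -\frac{\tau}{k}\bigl(\mathcal{A}^{+}_{\tau,\varepsilon}\bigr)^{-1}\partial_x P_N\Bigl[f\Bigl(\tfrac{w+\phi_1}{2}\Bigr)-f\Bigl(\tfrac{w+\phi_2}{2}\Bigr)\Bigr].
\end{align*}
The operator $(\mathcal{A}^{+}_{\tau,\varepsilon})^{-1}\partial_x$ is a Fourier multiplier with symbol $\mathrm i\xi/(1-\tfrac{\mathrm i}{2}\varepsilon^2\tau\xi^3)$. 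Its modulus is bounded by $|\xi|$, and also by $2/(\varepsilon^2\tau\xi^2)$.

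The key step is to bound $\tau\,\sup_{|\xi|\le N/2}|\xi|/|1-\tfrac{\mathrm i}{2}\varepsilon^2\tau\xi^3|$ by $C\tau^{\sigma}$. I would interpolate between the two bounds by taking the geometric mean with weights $\theta$ and $1-\theta$:
\begin{align*}
\frac{|\xi|}{|1-\tfrac{\mathrm i}{2}\varepsilon^2\tau\xi^3|}\le |\xi|\min\Bigl\{1,\frac{2}{\varepsilon^2\tau|\xi|^3}\Bigr\}\le 2\,|\xi|^{1-3\theta}(\varepsilon^2\tau)^{-\theta},\qquad \theta\in[0,1/3].
\end{align*}
The two choices $\theta=0$ and $\theta=1/3$ give $\tau|\xi|\le \tau N$ and $\tau^{2/3}\varepsilon^{-2/3}$ respectively. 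Either one alone suffices under \eqref{eq:tau-CN}, since $\tau N\lesssim \tau^{1-1/(1+\sigma_0)}=\tau^{\sigma_0/(1+\sigma_0)}$. Taking the minimum, or simply $\theta=0$, gives an $\varepsilon$-uniform factor $\tau^{\sigma}$ with $\sigma=\sigma_0/(1+\sigma_0)$. The $\varepsilon$ restriction then serves as an alternative route, and I expect the paper uses the $\varepsilon$ branch to cover very large $N$; in any case one of the two bounds yields $\tau^\sigma$. The only delicate point is making sure the bound does not depend on $\varepsilon$.

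Next I use $\|P_N g\|_{H^\gamma}\le\|g\|_{H^\gamma}$ from Lemma~\ref{lem:operator-est}, together with the algebra property of $H^\gamma$ for $\gamma>1/2$. The factorization $f(a)-f(b)=(a-b)\sum_{j=0}^{k-1} a^j b^{k-1-j}$ then gives
\begin{align*}
\Bigl\|f\Bigl(\tfrac{w+\phi_1}{2}\Bigr)-f\Bigl(\tfrac{w+\phi_2}{2}\Bigr)\Bigr\|_{H^\gamma}\le C\bigl(\|w\|_{H^\gamma},\|\phi_1\|_{H^\gamma},\|\phi_2\|_{H^\gamma}\bigr)\|\phi_1-\phi_2\|_{H^\gamma}.
\end{align*}
Since the multiplier acts diagonally in Fourier space, its sup bound transfers to the $H^\gamma$ norm, and combining the two estimates yields \eqref{eq:contraction}.

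For (ii) I apply the Banach fixed point theorem on the complete set $B_{2M}\subset Y_N$. For self-mapping, I write $\psi_w(\phi)=(\mathcal{A}^+)^{-1}\mathcal{A}^-w+[\psi_w(\phi)-\psi_w(-w)]+\psi_w(-w)$. Here $\psi_w(-w)=(\mathcal{A}^+)^{-1}\mathcal{A}^-w$ because $f(0)=0$. The multiplier of $(\mathcal{A}^+)^{-1}\mathcal{A}^-$ has modulus one, so $\|\psi_w(-w)\|_{H^\gamma}=\|w\|_{H^\gamma}\le M$. Part (i) gives $\|\psi_w(\phi)-\psi_w(-w)\|_{H^\gamma}\le C(M)\tau^\sigma\cdot 3M$, so $\|\psi_w(\phi)\|_{H^\gamma}\le M+3C(M)M\tau^\sigma\le 2M$ once $\tau$ is small. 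Finally, I choose $\tau_M$ so that $C(M,2M,2M)\tau_M^\sigma\le\min\{1/2,1/(3C(M))\}$. Then $\psi_w$ is a contraction on $B_{2M}$ with constant $\le 1/2$, which gives existence and uniqueness of $\phi^*$ in $B_{2M}$.
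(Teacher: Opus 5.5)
Your proof is correct and follows the same route as the paper. It combines a Fourier-multiplier bound for $\tau(\mathcal{A}^{+}_{\tau,\varepsilon})^{-1}\partial_x P_N$, the $H^\gamma$ algebra property applied to the factorized nonlinear difference, and Banach's fixed-point theorem on $B_{2M}$, using that $(\mathcal{A}^{+}_{\tau,\varepsilon})^{-1}\mathcal{A}^{-}_{\tau,\varepsilon}$ is an $H^\gamma$-isometry.

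Your two deviations are sound minor refinements. First, you bound the multiplier by the \emph{minimum} of $\tau N/2$ and $C(\tau/\varepsilon)^{2/3}$. The paper writes a maximum and therefore needs both halves of the step-size condition, whereas either half alone already yields the factor $\tau^{\sigma}$. Second, you anchor the self-map estimate at $\psi_w(-w)=(\mathcal{A}^{+}_{\tau,\varepsilon})^{-1}\mathcal{A}^{-}_{\tau,\varepsilon}w$ instead of $\psi_w(0)$, which removes the paper's separate bound on $f(w/2)$.
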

\begin{proof}
	\textbf{(i)}  From the definition of $\psi_{w}$,
	\begin{align}
		\psi_{w}(\phi_{1}) - \psi_{w}(\phi_{2})
		= -\frac{\tau}{k} \bigl( \mathcal{A}_{\tau,\varepsilon}^{+} \bigr)^{-1}
		\partial_{x} P_{N}
		\Bigl[ f\Bigl( \frac{w+\phi_{1}}{2} \Bigr)
		- f\Bigl( \frac{w+\phi_{2}}{2} \Bigr) \Bigr]. \label{eq:psi-diff}
	\end{align}
	In the frequency domain, for any $\phi\in H^{\gamma}(\mathbb{T})$,
	\begin{align*}
		\bigl\| \tau \bigl( \mathcal{A}_{\tau,\varepsilon}^{+} \bigr)^{-1}
		\partial_{x} P_{N}\,\phi \bigr\|_{H^{\gamma}}^{2}
		= \sum_{\xi=-N/2}^{N/2} (1+\xi^{2})^{\gamma}
		\Bigl| \frac{\mathrm i \tau \xi}{1 - \frac{1}{2}\mathrm i\,\varepsilon^{2}\tau\xi^{3}} \Bigr|^{2}
		\, \bigl| \widehat{\phi}(\xi) \bigr|^{2}.
	\end{align*}
	The case $\xi=0$ is trivial.  For $\xi\neq 0$, optimizing the multiplier yields
	\begin{align*}
		\Bigl| \frac{\mathrm i \tau \xi}{1 - \frac{1}{2}\mathrm i\,\varepsilon^{2}\tau\xi^{3}} \Bigr|^{2}
		\lesssim \max\Bigl\{ \Bigl( \frac{\tau}{\varepsilon} \Bigr)^{\frac{4}{3}},\; \tau^{2} N^{2} \Bigr\},
	\end{align*}
	and consequently
	\begin{align}
		\bigl\| \tau \bigl( \mathcal{A}_{\tau,\varepsilon}^{+} \bigr)^{-1}
		\partial_{x} P_{N}\,\phi \bigr\|_{H^{\gamma}}^{2}
		\lesssim \max\Bigl\{ \Bigl( \frac{\tau}{\varepsilon} \Bigr)^{\frac{4}{3}},\;
		\tau^{2} N^{2} \Bigr\} \, \| \phi \|_{H^{\gamma}}^{2}. \label{eq:Ainv-bound}
	\end{align}
	Using the two bounds encoded in the step-size condition~\eqref{eq:tau-CN},
	\begin{align*}
		\Bigl( \frac{\tau}{\varepsilon} \Bigr)^{\frac{4}{3}} = \tau^{\frac{4\sigma_{0}}{3(1+\sigma_{0})}}
		\cdot \tau^{\frac{4}{3(1+\sigma_{0})}} \varepsilon^{-\frac{4}{3}}
		\lesssim \tau^{\frac{4\sigma_{0}}{3(1+\sigma_{0})}}, \quad
		\tau^{2} N^{2}
		= \tau^{\frac{2\sigma_{0}}{1+\sigma_{0}}}
		\cdot \tau^{\frac{2}{1+\sigma_{0}}} N^{2}
		\lesssim \tau^{\frac{2\sigma_{0}}{1+\sigma_{0}}}.
	\end{align*}
	Hence,
	\begin{align}
		\bigl\| \tau \bigl( \mathcal{A}_{\tau,\varepsilon}^{+} \bigr)^{-1}
		\partial_{x} P_{N}\,\phi \bigr\|_{H^{\gamma}}^{2}
		\lesssim \tau^{2\sigma} \| \phi \|_{H^{\gamma}}^{2}, \qquad
		\sigma := \begin{cases}
			\displaystyle \frac{2\sigma_{0}}{3(1+\sigma_{0})}, & \tau \le 1, \\[10pt]
			\displaystyle \frac{\sigma_{0}}{1+\sigma_{0}},      & \tau > 1.
		\end{cases} \label{eq:Ainv-sigma}
	\end{align}
	Moreover, since $\gamma > \frac{1}{2}$, the Sobolev space $H^{\gamma}$ is a
	Banach algebra, and therefore
	\begin{align}
		\Bigl\| f\Bigl( \frac{w+\phi_{1}}{2} \Bigr)
		- f\Bigl( \frac{w+\phi_{2}}{2} \Bigr) \Bigr\|_{H^{\gamma}}
		&= \Bigl\| \frac{\phi_{1}-\phi_{2}}{2} 
		\sum_{j=0}^{k-1}
		\Bigl( \frac{w+\phi_{1}}{2} \Bigr)^{k-1-j}
		\Bigl( \frac{w+\phi_{2}}{2} \Bigr)^{j} \Bigr\|_{H^{\gamma}} \notag \\
		&\le C\bigl( \|w\|_{H^{\gamma}}, \|\phi_{1}\|_{H^{\gamma}},
		\|\phi_{2}\|_{H^{\gamma}} \bigr) \,
		\|\phi_{1}-\phi_{2}\|_{H^{\gamma}}. \label{eq:f-diff-CN}
	\end{align}
	Substituting~\eqref{eq:Ainv-sigma} and~\eqref{eq:f-diff-CN}
	into~\eqref{eq:psi-diff} gives~\eqref{eq:contraction}, which proves~\textit{(i)}.
	
	\textbf{(ii)}  Restricted to $B_{2M}$, the estimate~\eqref{eq:contraction}
	provides a constant $C_{1,M}>0$ (depending on $M$) such that
	\begin{align*}
		\| \psi_{w}(\phi_{1}) - \psi_{w}(\phi_{2}) \|_{H^{\gamma}}
		\le C_{1,M}\,\tau^{\sigma} \, \|\phi_{1}-\phi_{2}\|_{H^{\gamma}},
		\qquad \forall\, \phi_{1},\phi_{2}\in B_{2M}.
	\end{align*}
	In particular, for any $\phi\in B_{2M}$,
	\begin{align*}
		\| \psi_{w}(\phi) - \psi_{w}(0) \|_{H^{\gamma}}
		\le C_{1,M}\,\tau^{\sigma} \|\phi\|_{H^{\gamma}}
		\le 2\,C_{1,M}\,\tau^{\sigma} M.
	\end{align*}
	To estimate $\|\psi_{w}(0)\|_{H^{\gamma}}$, observe that
	\begin{align*}
		\bigl\| \bigl( \mathcal{A}_{\tau,\varepsilon}^{+} \bigr)^{-1}
		\mathcal{A}_{\tau,\varepsilon}^{-} w \bigr\|_{H^{\gamma}}^{2}
		= \sum_{\xi=-N/2}^{N/2} (1+\xi^{2})^{\gamma}
		\Bigl| \frac{1+\frac{1}{2}\mathrm i\,\varepsilon^{2}\tau\xi^{3}}
		{1-\frac{1}{2}\mathrm i\,\varepsilon^{2}\tau\xi^{3}} \Bigr|^{2}
		\, |\widehat{w}(\xi)|^{2}
		= \| w \|_{H^{\gamma}}^{2}.
	\end{align*}
	Together with~\eqref{eq:Ainv-sigma},
	\begin{align*}
		\| \psi_{w}(0) \|_{H^{\gamma}}
		&\le \bigl\| \bigl( \mathcal{A}_{\tau,\varepsilon}^{+} \bigr)^{-1}
		\mathcal{A}_{\tau,\varepsilon}^{-} w \bigr\|_{H^{\gamma}}
		+ \frac{1}{k} \Bigl\| \bigl( \mathcal{A}_{\tau,\varepsilon}^{+} \bigr)^{-1}
		\partial_{x} P_{N} f\Bigl( \frac{w}{2} \Bigr)
		\Bigr\|_{H^{\gamma}} \\
		&\le \| w \|_{H^{\gamma}} + \tau^{\sigma} C_{2} \| w \|_{H^{\gamma}}^{k}
		\le M\bigl( 1 + \tau^{\sigma} C_{2} M^{k-1} \bigr),
	\end{align*}
	where $C_{2}>0$ is the constant arising from~\eqref{eq:Ainv-sigma} combined
	with the algebra estimate for $f(w/2)$.
	
	Choose $\tau_{M}>0$ such that
	$\tau_{M}^{\sigma}\bigl( C_{2} M^{k-1} + 2\,C_{1,M} \bigr) \le 1$.
	Then for any $0<\tau\le\tau_{M}$ and $\phi\in B_{2M}$,
	\begin{align*}
		\| \psi_{w}(\phi) \|_{H^{\gamma}}
		&\le \| \psi_{w}(0) \|_{H^{\gamma}}
		+ \| \psi_{w}(\phi) - \psi_{w}(0) \|_{H^{\gamma}} \\
		&\le M\bigl( 1 + \tau^{\sigma} C_{2} M^{k-1} \bigr)
		+ 2\,C_{1,M}\,\tau^{\sigma} M
		= M \Bigl[ 1 + \tau^{\sigma}\bigl( C_{2} M^{k-1} + 2\,C_{1,M} \bigr) \Bigr]
		\le 2M,
	\end{align*}
	so that $\psi_{w}$ maps $B_{2M}$ into itself.  Moreover,
	\begin{align}
		\| \psi_{w}(\phi_{1}) - \psi_{w}(\phi_{2}) \|_{H^{\gamma}}
		\le C_{1,M}\,\tau^{\sigma} \|\phi_{1}-\phi_{2}\|_{H^{\gamma}}
		\le \frac{1}{2} \|\phi_{1}-\phi_{2}\|_{H^{\gamma}}, \label{eq:contraction-half}
	\end{align}
	i.e., $\psi_{w}$ is a strict contraction on $B_{2M}$.  By the Banach
	fixed-point theorem, there exists a unique $\phi^{*}\in B_{2M}$ such that
	$\phi^{*} = \psi_{w}(\phi^{*})$, which completes the proof of~\textit{(ii)}.
\end{proof}

The contraction property established above directly implies the convergence of
the fixed-point iteration~\eqref{eq: fixed-point iteration} with a linear rate.

\begin{corollary}[Convergence of the fixed-point iteration]\label{crlr:iteration-convergence}
	Under the conditions of Lemma~\ref{lem:contraction-map}, let $M>0$ satisfy
	$\|u_{N}^{n}\|_{H^{\gamma}}\le M$ and let $\tau_{M}>0$ be as given therein.
	Then for all $0<\tau\le\tau_{M}$, the sequence
	$\{ u_{N,(i)}^{n+1} \}_{i=0}^{\infty}$ generated by~\eqref{eq: fixed-point iteration}
	converges to the unique solution $u_{N}^{n+1}$ in $B_{2M}$, and
	\begin{align}
		\bigl\| u_{N,(i)}^{n+1} - u_{N}^{n+1} \bigr\|_{H^{\gamma}}
		\le \frac{1}{2^{\,i-1}} \,
		\bigl\| u_{N,(1)}^{n+1} - u_{N,(0)}^{n+1} \bigr\|_{H^{\gamma}},
		\qquad i\in \mathbb{Z}_{+}. \label{eq:iter-rate}
	\end{align}
\end{corollary}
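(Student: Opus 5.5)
The plan is to apply Lemma~\ref{lem:contraction-map}(ii) with $w = u_{N}^{n}$ and then run the standard Banach fixed-point argument. Since $\|u_{N}^{n}\|_{H^{\gamma}}\le M$, the lemma gives, for $0<\tau\le\tau_{M}$, that $\psi_{u_{N}^{n}}$ maps $B_{2M}$ into itself. By \eqref{eq:contraction-half}, it is a contraction there with constant $\tfrac12$. It also has a unique fixed point in $B_{2M}$, which by construction of $\psi_w$ is exactly the solution $u_{N}^{n+1}$ of \eqref{scm: CN-FP} lying in $B_{2M}$.

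First we check that the iterates stay in the ball. The initial guess $u_{N,(0)}^{n+1}=u_{N}^{n}$ satisfies $\|u_{N}^{n}\|_{H^{\gamma}}\le M\le 2M$, so it lies in $B_{2M}$. Invariance of $B_{2M}$ under $\psi_{u_N^n}$ then gives, by induction, $u_{N,(i)}^{n+1}\in B_{2M}$ for all $i\ge0$. This is the only point needing care: the contraction constant $C_{1,M}\tau^\sigma\le\tfrac12$ holds only on $B_{2M}$, so every iterate must stay there.

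Next we obtain geometric decay of successive differences. Applying \eqref{eq:contraction-half} repeatedly gives
\begin{align*}
\|u_{N,(i+1)}^{n+1}-u_{N,(i)}^{n+1}\|_{H^{\gamma}}\le 2^{-i}\|u_{N,(1)}^{n+1}-u_{N,(0)}^{n+1}\|_{H^{\gamma}}.
\end{align*}
Hence the sequence is Cauchy in the finite-dimensional, and therefore complete, closed set $B_{2M}$. Its limit is a fixed point by continuity of $\psi_{u_N^n}$, and by uniqueness in $B_{2M}$ it equals $u_{N}^{n+1}$.

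Finally, the rate \eqref{eq:iter-rate}. Summing the telescoping tail,
\begin{align*}
\|u_{N,(i)}^{n+1}-u_{N}^{n+1}\|_{H^{\gamma}}\le\sum_{j\ge i}2^{-j}\|u_{N,(1)}^{n+1}-u_{N,(0)}^{n+1}\|_{H^{\gamma}}=2^{1-i}\|u_{N,(1)}^{n+1}-u_{N,(0)}^{n+1}\|_{H^{\gamma}},
\end{align*}
which is exactly \eqref{eq:iter-rate}. The same bound follows from $\|e_i\|\le\tfrac12\|e_{i-1}\|$ combined with the a posteriori estimate $\|e_0\|\le 2\|u_{N,(1)}^{n+1}-u_{N,(0)}^{n+1}\|_{H^{\gamma}}$, where $e_i := u_{N,(i)}^{n+1}-u_{N}^{n+1}$.

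We expect no genuine obstacle. The $\varepsilon$-uniformity is inherited directly from Lemma~\ref{lem:contraction-map}, since $\tau_M$ there depends only on $M$ and $\sigma_0$ through the step-size condition \eqref{eq:tau-CN}.
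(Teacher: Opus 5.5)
Your proposal is correct and follows the paper's proof essentially step by step: invariance of $B_{2M}$ by induction from $u_{N,(0)}^{n+1}=u_{N}^{n}\in B_{M}$, geometric decay of successive differences via \eqref{eq:contraction-half}, the telescoping Cauchy bound giving the factor $2^{1-i}$, and identification of the limit with $u_{N}^{n+1}$ by uniqueness of the fixed point in $B_{2M}$. The only cosmetic difference is that you show the limit is a fixed point by continuity of $\psi_{u_{N}^{n}}$, whereas the paper writes out the corresponding triangle-inequality estimate explicitly.
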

\begin{proof}
	By Lemma~\ref{lem:contraction-map}\textit{(ii)}, $\psi_{u_{N}^{n}}$ maps $B_{2M}$ to
	itself.  Starting from $u_{N,(0)}^{n+1}=u_{N}^{n}\in B_{M}\subset B_{2M}$,
	induction shows that $u_{N,(i)}^{n+1}\in B_{2M}$ for all $i\ge 0$.
	Using~\eqref{eq:contraction-half},
	\begin{align*}
		\bigl\| u_{N,(i+1)}^{n+1} - u_{N,(i)}^{n+1} \bigr\|_{H^{\gamma}}
		= \bigl\| \psi_{u_{N}^{n}}(u_{N,(i)}^{n+1})
		- \psi_{u_{N}^{n}}(u_{N,(i-1)}^{n+1}) \bigr\|_{H^{\gamma}}
		\le \frac{1}{2} \,
		\bigl\| u_{N,(i)}^{n+1} - u_{N,(i-1)}^{n+1} \bigr\|_{H^{\gamma}},
	\end{align*}
	and by iterating this inequality,
	\begin{align*}
		\bigl\| u_{N,(i+1)}^{n+1} - u_{N,(i)}^{n+1} \bigr\|_{H^{\gamma}}
		\le \frac{1}{2^{\,i}} \,
		\bigl\| u_{N,(1)}^{n+1} - u_{N,(0)}^{n+1} \bigr\|_{H^{\gamma}}.
	\end{align*}
	For any $j\ge 1$, the triangle inequality yields
	\begin{align}
		\bigl\| u_{N,(i+j)}^{n+1} - u_{N,(i)}^{n+1} \bigr\|_{H^{\gamma}}
		&\le \sum_{l=i}^{i+j-1}
		\bigl\| u_{N,(l+1)}^{n+1} - u_{N,(l)}^{n+1} \bigr\|_{H^{\gamma}} \notag \\
		&\le \sum_{l=i}^{i+j-1} \frac{1}{2^{\,l}} \,
		\bigl\| u_{N,(1)}^{n+1} - u_{N,(0)}^{n+1} \bigr\|_{H^{\gamma}}
		\le \frac{1}{2^{\,i-1}} \,
		\bigl\| u_{N,(1)}^{n+1} - u_{N,(0)}^{n+1} \bigr\|_{H^{\gamma}}
		\xrightarrow{i\to\infty} 0. \label{eq:cauchy}
	\end{align}
Thus, $\{u_{N,(i)}^{n+1}\}_{i=0}^{\infty}$ is a Cauchy sequence in the finite-dimensional
	space $Y_{N}$; denote its limit by $u_{N,(\infty)}^{n+1}$.  Then,
	\begin{align*}
		\bigl\| u_{N,(\infty)}^{n+1} - \psi_{u_{N}^{n}}(u_{N,(\infty)}^{n+1}) \bigr\|_{H^{\gamma}}
		&\le \bigl\| u_{N,(\infty)}^{n+1} - u_{N,(i+1)}^{n+1} \bigr\|_{H^{\gamma}}
		+ \bigl\| u_{N,(i+1)}^{n+1}
		- \psi_{u_{N}^{n}}(u_{N,(\infty)}^{n+1}) \bigr\|_{H^{\gamma}} \\
		&= \bigl\| u_{N,(\infty)}^{n+1} - u_{N,(i+1)}^{n+1} \bigr\|_{H^{\gamma}}
		+ \bigl\| \psi_{u_{N}^{n}}(u_{N,(i)}^{n+1})
		- \psi_{u_{N}^{n}}(u_{N,(\infty)}^{n+1}) \bigr\|_{H^{\gamma}} \\
		&\le \bigl\| u_{N,(\infty)}^{n+1} - u_{N,(i+1)}^{n+1} \bigr\|_{H^{\gamma}}
		+ \frac{1}{2} \,
		\bigl\| u_{N,(i)}^{n+1} - u_{N,(\infty)}^{n+1} \bigr\|_{H^{\gamma}}
		\xrightarrow{i\to\infty} 0,
	\end{align*}
	which shows that $u_{N,(\infty)}^{n+1} = \psi_{u_{N}^{n}}(u_{N,(\infty)}^{n+1})$.
By the uniqueness of the fixed point, $u_{N,(\infty)}^{n+1}=u_{N}^{n+1}$.
	Letting $j\to\infty$ in~\eqref{eq:cauchy} gives~\eqref{eq:iter-rate}.
\end{proof}

\begin{lemma}[Discrete conservation laws]\label{lem: conservative law}
	The fully discrete CN scheme~\eqref{scm: CN-FP} preserves discrete mass and energy, i.e.,
	\begin{align*}
		M[u_{N}^{n}] \equiv M[u_{N}^{0}], \qquad
		E[u_{N}^{n}] \equiv E[u_{N}^{0}], \qquad n = 0, 1, \dots
	\end{align*}
\end{lemma}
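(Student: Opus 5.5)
The plan is to test the scheme~\eqref{scm: CN-FP} against two functions in the $L^{2}$ inner product. For mass we test against the constant $1$, and for energy we test against $u_{N}^{n+\frac12}$. Throughout we use that $u_{N}^{n}$ is real-valued, which holds because $P_{N}$ uses the symmetric frequency truncation and so preserves conjugate symmetry. We also use that every term of the scheme lies in $Y_{N}$, so integration by parts on $\mathbb{T}$ produces no boundary terms.

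\textbf{Mass.} Integrate~\eqref{scm: CN-FP} over $\mathbb{T}$.
\begin{itemize}
\item The dispersive term $\varepsilon^{2}\partial_{x}^{3}u_{N}^{n+\frac12}$ is an exact derivative of a periodic function, so its integral vanishes.
\item For the nonlinear term, $\int_{\mathbb{T}} P_{N}\partial_{x} f(u_{N}^{n+\frac12})\,\mathrm{d}x = 2\pi\,\widehat{(\partial_{x} f)}(0)=0$. This is because $P_{N}$ retains the zero mode unchanged, and the zero mode of any derivative is zero.
\end{itemize}
Hence $\int_{\mathbb{T}}(u_{N}^{n+1}-u_{N}^{n})\,\mathrm{d}x=0$, and induction on $n$ gives $M[u_{N}^{n}]\equiv M[u_{N}^{0}]$.

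\textbf{Energy.} Take the inner product of~\eqref{scm: CN-FP} with $u_{N}^{n+\frac12}$.
\begin{itemize}
\item The time-difference term gives $\frac{1}{2\tau}\bigl(\|u_{N}^{n+1}\|_{L^{2}}^{2}-\|u_{N}^{n}\|_{L^{2}}^{2}\bigr)$, since the functions are real.
\item The dispersive term gives $\varepsilon^{2}\langle \partial_{x}^{3}w,w\rangle$ with $w=u_{N}^{n+\frac12}$. Because $\partial_{x}^{3}$ is skew-adjoint on periodic functions, this is $-\varepsilon^{2}\langle w,\partial_{x}^{3}w\rangle$, hence zero for real $w$. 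Equivalently, one integration by parts gives $-\int \partial_{x}^{2}w\,\partial_{x}w\,\mathrm{d}x=-\frac12\int\partial_{x}(\partial_{x}w)^{2}\,\mathrm{d}x=0$.
\item For the nonlinear term, $P_{N}$ is self-adjoint on $L^{2}$ and $P_{N}w=w$ because $w\in Y_{N}$. Therefore $\langle P_{N}\partial_{x}f(w),w\rangle=\langle \partial_{x} f(w),w\rangle=\int \partial_{x}(w^{k})\,w\,\mathrm{d}x=\int k w^{k}\partial_{x}w\,\mathrm{d}x=\frac{k}{k+1}\int\partial_{x}(w^{k+1})\,\mathrm{d}x=0.$
\end{itemize}
Thus $\|u_{N}^{n+1}\|_{L^{2}}=\|u_{N}^{n}\|_{L^{2}}$, and induction gives $E[u_{N}^{n}]\equiv E[u_{N}^{0}]$.

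The only step needing care is the nonlinear term in the energy identity. The key point is that the projection $P_{N}$ can be removed when the term is tested against an element of $Y_{N}$. This is exactly why the scheme uses $P_{N}$ rather than $I_{N}$: with $I_{N}$, aliasing would break the cancellation. We also need $u_{N}^{n}$ to be real, so that $w^{k+1}$ is an exact antiderivative without conjugates. Well-definedness of $u_{N}^{n+1}$ is taken from Lemma~\ref{lem:contraction-map}, or the statement is read as applying to any solution of~\eqref{scm: CN-FP}.
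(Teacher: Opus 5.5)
Your proof is correct and follows the paper's own argument. You integrate the scheme over $\mathbb{T}$ for mass and test it against $u_{N}^{n+\frac{1}{2}}$ for energy. The dispersive term then vanishes by integration by parts. In the nonlinear term you drop $P_{N}$ because $u_{N}^{n+\frac{1}{2}}\in Y_{N}$, and integrate by parts again. Your extra remarks on real-valuedness and on the zero Fourier mode simply make explicit what the paper leaves implicit.
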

\begin{proof}
	Integrating~\eqref{scm: CN-FP} over $\mathbb{T}$ immediately gives
	$M[u_{N}^{n+1}] = M[u_{N}^{n}]$ for all $n\in\mathbb{N}$.
	For the energy, take the $L^{2}$ inner product of~\eqref{scm: CN-FP} with
	$u_{N}^{n+\frac{1}{2}}$:
	\begin{align*}
		\frac{1}{2\tau}\Bigl( \| u_{N}^{n+1} \|_{L^{2}}^{2} - \| u_{N}^{n} \|_{L^{2}}^{2} \Bigr)
		+ \varepsilon^{2} \bigl\langle \partial_{x}^{3} u_{N}^{n+\frac{1}{2}},
		u_{N}^{n+\frac{1}{2}} \bigr\rangle
		+ \frac{1}{k} \bigl\langle \partial_{x} P_{N} f\bigl( u_{N}^{n+\frac{1}{2}} \bigr),
		u_{N}^{n+\frac{1}{2}} \bigr\rangle = 0.
	\end{align*}
	Integration-by-parts yields
	$\langle \partial_{x}^{3} u_{N}^{n+\frac{1}{2}}, u_{N}^{n+\frac{1}{2}} \rangle = 0$.
	Moreover, since $u_{N}^{n+\frac{1}{2}}\in Y_{N}$,
	\begin{align*}
		\bigl\langle \partial_{x} P_{N} f\bigl( u_{N}^{n+\frac{1}{2}} \bigr),
		u_{N}^{n+\frac{1}{2}} \bigr\rangle
		= \bigl\langle \partial_{x} f\bigl( u_{N}^{n+\frac{1}{2}} \bigr),
		u_{N}^{n+\frac{1}{2}} \bigr\rangle = 0,
	\end{align*}
	again by integration-by-parts.  Hence,
	$\| u_{N}^{n+1} \|_{L^{2}}^{2} = \| u_{N}^{n} \|_{L^{2}}^{2}$,
	completing the proof.
\end{proof}

\subsection{Convergence result}\label{sec: 3.3}

In the dispersionless regime $\varepsilon \ll 1$, it is known that there exists a critical time $t_{c}$, independent of $\varepsilon$ but depending on the initial datum $u_0$, such that the solution $u(x,t)$ of~\eqref{eq: gKdV} remains smooth for all $t < t_{c}$ \cite{lax1983small, masoero2013semiclassical}. Beyond $t_{c}$, dispersive shock waves develop and the characteristic wavelength shrinks to $\mathcal{O}(\varepsilon)$; this highly challenging regime is deferred to future work.  Throughout this section, we consider the terminal time $T < t_{c}$, restricting the computation time to $0 \leq t \le T$, and impose the following regularity assumption (no finite-time blowup) on the exact solution of the gKdV equation \eqref{eq: gKdV}  as follows.
\begin{assumption}\label{assump: regularity for CN}
	Suppose that $u \in L^{\infty}\bigl(0, T; H^{\gamma + m}(\mathbb{T})\bigr)$,
	$\partial_{t}^{2} u \in L^{\infty}\bigl(0, T; H^{\gamma + 3}(\mathbb{T})\bigr)$, and
	$\partial_{t}^{3} u \in L^{\infty}\bigl(0, T; H^{\gamma}(\mathbb{T})\bigr)$
	with $\gamma > \frac{3}{2},m \ge 1$ and $0<T<t_c$. Moreover, there exists a constant
	$M >1$ independent of $\varepsilon$ such that
	\begin{align*}
		\max_{0\le t \le T} \bigl\{ \lVert u \rVert_{H^{\gamma + m}},\;
		\lVert \partial_{t}^{2} u \rVert_{H^{\gamma + 3}},\; \lVert \partial_{t}^{3} u \rVert_{H^{\gamma}} \bigr\} \le M-1.
	\end{align*}
\end{assumption}

\begin{theorem}[Convergence of  CN scheme]\label{thm:CN-convergence}
	Assume that  Assumption~\ref{assump: regularity for CN} holds and $\tau \lesssim \min\{ \varepsilon^{1+\sigma_{0}},\, N^{-(1+\sigma_{0})} \}$ for any fixed $\sigma_{0}>0$ that can be arbitrarily small. Let $u_{N}^{n}$ be the numerical solution computed via the CN scheme~\eqref{scm: CN-FP}.  Then, there exist $N_{1}>0$ and $\tau_{1}>0$ such that if $N\ge N_{1}$ and $0<\tau\le\tau_{1}$,
	\begin{align}
		\| u(\cdot, t_{n}) - u_{N}^{n} \|_{H^{\gamma}}
		\le C\bigl( \tau^{2} + N^{-m+1} \bigr), \qquad 0 \le n \le T/\tau, \label{eq:CN-error}
	\end{align}
	where $N_{1}, \tau_{1},C>0$ depend on $M$ and $T$, but is independent of $\varepsilon$, $N$, and $\tau$.
\end{theorem}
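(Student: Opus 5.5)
We compare the numerical solution with the projected exact solution and run an $H^{\gamma}$ energy estimate on the error, closed by induction. The induction hypothesis is the a priori bound $\|u_N^n\|_{H^\gamma}\le M$. Write $u^n:=u(\cdot,t_n)$, $u_P^n:=P_Nu^n$, and split
\[
u^n-u_N^n=(u^n-u_P^n)+e^n,\qquad e^n:=u_P^n-u_N^n\in Y_N .
\]
By Lemma~\ref{lem:operator-est}, $\|u^n-u_P^n\|_{H^\gamma}\le N^{-m}M$. It therefore suffices to show $\|e^n\|_{H^\gamma}\lesssim \tau^2+N^{-m+1}$. The initial error $e^0=P_N(u_0-I_{2N}u_0)$ is $O(N^{-m})$, again by Lemma~\ref{lem:operator-est}.

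\textbf{Local error.} Insert $u_P^n$ into \eqref{scm: CN-FP}. This gives
\[
\tfrac1\tau(u_P^{n+1}-u_P^n)+\varepsilon^2\partial_x^3u_P^{n+\frac12}+\tfrac1kP_N\partial_xf(u_P^{n+\frac12})=\xi^n .
\]
Subtracting $P_N$ of the PDE averaged over $[t_n,t_{n+1}]$ splits $\xi^n$ into three pieces.

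The first piece is the time-stepping defect
\[
\varepsilon^2\partial_x^3P_N\Bigl(\tfrac{u^n+u^{n+1}}2-\tfrac1\tau\int_{t_n}^{t_{n+1}}u\,dt\Bigr).
\]
Its $H^\gamma$ norm is $\lesssim \tau^2\varepsilon^2\|\partial_t^2u\|_{H^{\gamma+3}}\lesssim\tau^2$, uniformly in $\varepsilon$. This is exactly why Assumption~\ref{assump: regularity for CN} requires $\partial_t^2u\in H^{\gamma+3}$: we never have to divide by $\varepsilon$.

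The second piece is the analogous trapezoidal/midpoint defect in the nonlinear term. It contains $\partial_xf$ evaluated at $u^{n+1/2}$ versus the time average of $\partial_xf(u)$. We bound it by $\tau^2$ using Taylor expansion in time together with the algebra property, with $\partial_t u$ and $\partial_t^2u$ in $H^{\gamma+1}$. Here $\partial_tu$ is controlled through the equation, although the $\varepsilon^2\partial_x^3u$ term needs care; we expect $\|u\|_{H^{\gamma+m}}$ with $m\ge 3$, or the $\partial_t^2u$ bound, to suffice.

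The third piece is the projection error $\partial_xP_N[f(u^{n+1/2})-f(u_P^{n+1/2})]$, which is $\lesssim N^{-m+1}$. It is the source of the loss of one power of $N$.

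\textbf{Error equation.} The error satisfies
\[
\tfrac1\tau(e^{n+1}-e^n)+\varepsilon^2\partial_x^3e^{n+\frac12}+\tfrac1kP_N\partial_x\bigl[f(u_P^{n+\frac12})-f(u_N^{n+\frac12})\bigr]=\xi^n .
\]
We take the $H^\gamma$ inner product with $e^{n+1/2}$. The dispersive term is skew and vanishes exactly, which is what makes the estimate uniform in $\varepsilon$. The $P_N$ can be dropped since $e^{n+1/2}\in Y_N$ and $P_N$ is self-adjoint and commutes with $J^\gamma$.

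Next we use \eqref{eq:f-f-3} with $u=u_P^{n+1/2}$ and $v=u_N^{n+1/2}$. The term $e\,Q_1$ is bounded by the algebra property: $\|Q_1\|_{H^\gamma}$ involves $\partial_xu_P\in H^{\gamma}$, which holds because $m\ge1$. The dangerous term $\partial_xe\cdot Q_2(v)$ loses a derivative. It is handled by Lemma~\ref{lem:Kato-Ponce}, \eqref{eq:Kato-Ponce-result}, with $v\mapsto Q_2(u_N^{n+1/2})$, since $\gamma>3/2$. This gives $\lesssim\|e^{n+1/2}\|_{H^\gamma}^2\|Q_2\|_{H^\gamma}$, where the constant depends on the a priori bound of $u_N^{n+1/2}$. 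Combining,
\[
\|e^{n+1}\|^2_{H^\gamma}-\|e^n\|^2_{H^\gamma}\le C_M\tau\bigl(\|e^{n+1}\|^2_{H^\gamma}+\|e^n\|^2_{H^\gamma}\bigr)+C\tau(\tau^2+N^{-m+1})^2 .
\]
Discrete Gronwall then yields $\|e^n\|_{H^\gamma}\le C(\tau^2+N^{-m+1})$, with $C$ independent of $\varepsilon$.

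\textbf{Closing the induction.} This is the main obstacle. The Kato--Ponce constant needs $\|u_N^{n+1}\|_{H^\gamma}$ bounded, but $u_N^{n+1}$ is only known implicitly. Lemma~\ref{lem:contraction-map}(ii) provides, under the step-size condition \eqref{eq:tau-CN}, the solution $u_N^{n+1}\in B_{2M}$. So the constant $C_M$ can be taken as the one for the ball $B_{2M}$, independent of the induction. After Gronwall, for $\tau\le\tau_1$ and $N\ge N_1$ we have $C(\tau^2+N^{-m+1})\le 1$, so
\[
\|u_N^{n+1}\|_{H^\gamma}\le\|u_P^{n+1}\|_{H^\gamma}+\|e^{n+1}\|_{H^\gamma}\le M-1+1=M .
\]
This recovers the hypothesis at level $n+1$, and fixing $C_M$ for $B_{2M}$ beforehand avoids any circularity. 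One must also check that the uniqueness in $B_{2M}$ identifies this solution with the one produced by the scheme. The step-size restriction enters only through existence and uniqueness, not through the error constants.
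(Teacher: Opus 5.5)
Your proof is correct and follows the paper's architecture. You work with the error $P_Nu(t_n)-u_N^n$ and take its $H^\gamma$ inner product with $e^{n+\frac12}$, so the dispersive term drops out exactly. You split the nonlinearity via \eqref{eq:f-f-3}, with $Q_1$ taken on the projected exact solution and $\partial_x e\,Q_2(u_N^{n+\frac12})$ handled by Lemma~\ref{lem:Kato-Ponce}. You use Lemma~\ref{lem:contraction-map}(ii) to place $u_N^{n+1}$ in $B_{2M}$, and the check you flag about identifying the scheme's solution with that fixed point is exactly what Corollary~\ref{crlr:iteration-convergence} supplies.

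The one genuine difference is the consistency step.
\begin{itemize}
\item The paper centres the projected PDE at $t_{n+\frac12}$. The difference quotient then leaves a defect $R_{2,1}^n$ controlled by $\partial_t^3u\in H^\gamma$. The nonlinear coupling term $R_{2,3}^n$ only sees $U_N^{n+\frac12}-P_Nu(t_{n+\frac12})$, hence only needs $\partial_t^2u\in H^{\gamma+1}$.
\item You average the PDE over $[t_n,t_{n+1}]$. This makes the difference quotient exact, so $\partial_t^3u$ is never used.
\item The price is that your nonlinear quadrature defect involves $\partial_t^2[f(u)]=f''(u)(\partial_tu)^2+f'(u)\partial_t^2u$ in $H^{\gamma+1}$. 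You therefore need $\|\partial_tu\|_{L^\infty(0,T;H^{\gamma+1})}$ bounded uniformly in $\varepsilon$.
\end{itemize}

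Your remark on how to get this bound needs correcting. Going through the equation requires $\varepsilon^2\partial_x^3u\in H^{\gamma+1}$, i.e.\ $m\ge4$, not $m\ge3$, and the theorem only assumes $m\ge1$. The route that works under the stated hypotheses is interpolation in time. For each $t\in[0,T]$ pick $s\in[0,T]$ with $|s-t|=T/2$. Taylor's formula $u(s)=u(t)+(s-t)\partial_tu(t)+\int_t^s(s-r)\partial_t^2u(r)\,\mathrm{d}r$ then gives
\[
\|\partial_tu\|_{L^\infty(0,T;H^{\gamma+1})}\le \tfrac{4}{T}\|u\|_{L^\infty(0,T;H^{\gamma+1})}+\tfrac{T}{4}\|\partial_t^2u\|_{L^\infty(0,T;H^{\gamma+1})}\le\bigl(\tfrac{4}{T}+\tfrac{T}{4}\bigr)M,
\]
uniformly in $\varepsilon$.

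With this patch your averaging route works under Assumption~\ref{assump: regularity for CN} and shows that its $\partial_t^3u$ hypothesis is actually dispensable. The paper's midpoint route instead uses that assumption exactly as stated, with no time interpolation.
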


\begin{proof}
	We proceed in three stages. 
    
    \textbf{Step 1.}(Projected equation and local truncation error). Applying $P_{N}$ to both sides of~\eqref{eq: gKdV} and inserting the projection inside the nonlinear term,
	\begin{align*}
		\partial_{t} P_{N}u
		+ \varepsilon^{2}\partial_{x}^{3} P_{N}u
		+ \frac{1}{k} \partial_{x} P_{N} f(P_{N}u)
		= \frac{1}{k} \partial_{x} P_{N} \bigl[ f(P_{N}u) - f(u) \bigr].
	\end{align*}
	Set $U_{N}^{n} := P_{N}u(\cdot, t_{n})$.  Centering the discretization at $t_{n+\frac{1}{2}}$ and rearranging yields
	\begin{equation}\label{eq:UN}
		\frac{1}{\tau}\bigl( U_{N}^{n+1} - U_{N}^{n} \bigr)
		+ \varepsilon^{2}\partial_{x}^{3} U_{N}^{n+\frac{1}{2}}
		+ \frac{1}{k} \partial_{x} P_{N} f\bigl( U_{N}^{n+\frac{1}{2}} \bigr)
		= R_{1}^{n} + R_{2,1}^{n} + \varepsilon^{2}\partial_{x}^{3} R_{2,2}^{n}
		+ R_{2,3}^{n},
	\end{equation}
	with the projection error of the nonlinearity:
	\begin{align*}
		R_{1}^{n} &:= \frac{1}{k} \partial_{x} P_{N}
		\Bigl[ f\bigl( P_{N}u(t_{n+\frac{1}{2}}) \bigr)
		- f\bigl( u(t_{n+\frac{1}{2}}) \bigr) \Bigr] \\
		&= P_{N} \Bigl[ (P_{N}-I)u(t_{n+\frac{1}{2}}) \, Q_{1}\bigl( P_{N}u(t_{n+\frac{1}{2}}),\, u(t_{n+\frac{1}{2}}) \bigr) + \partial_{x}\bigl( (P_{N}-I)u(t_{n+\frac{1}{2}}) \bigr) \, Q_{2}\bigl( u(t_{n+\frac{1}{2}}) \bigr) \Bigr],
	\end{align*}
	the temporal quadrature errors:
	\begin{align*}
		R_{2,1}^{n} &:= \frac{1}{\tau}\bigl( U_{N}^{n+1} - U_{N}^{n} \bigr) - \partial_{t} P_{N}u(t_{n+\frac{1}{2}}) = \frac{\tau^{2}}{16} \int_{0}^{1} (1-\rho)^{2} \partial_{t}^{3} P_{N} \Bigl( u\bigl( t_{n+\frac{1}{2}} + \tfrac{\rho\tau}{2} \bigr) + u\bigl( t_{n+\frac{1}{2}} - \tfrac{\rho\tau}{2} \bigr) \Bigr) \,\mathrm{d}\rho, \\[4pt]
		R_{2,2}^{n} &:= U_{N}^{n+\frac{1}{2}} - P_{N}u(t_{n+\frac{1}{2}}) = \frac{\tau^{2}}{8} \int_{0}^{1} (1-\rho) \partial_{t}^{2} P_{N} \Bigl( u\bigl( t_{n+\frac{1}{2}} + \tfrac{\rho\tau}{2} \bigr) + u\bigl( t_{n+\frac{1}{2}} - \tfrac{\rho\tau}{2} \bigr) \Bigr) \,\mathrm{d}\rho,
	\end{align*}
	and the coupling error from replacing $u(t_{n+\frac{1}{2}})$ by $U_{N}^{n+\frac{1}{2}}$ in the nonlinear term:
	\begin{align*}
		R_{2,3}^{n} &:= \frac{1}{k} \partial_{x} P_{N}
		\Bigl[ f\bigl( U_{N}^{n+\frac{1}{2}} \bigr)
		- f\bigl( P_{N}u(t_{n+\frac{1}{2}}) \bigr) \Bigr] \\
		&= P_{N} \Bigl[ R_{2,2}^{n} \, Q_{1}\bigl( U_{N}^{n+\frac{1}{2}},\, P_{N}u(t_{n+\frac{1}{2}}) \bigr) + \partial_{x} R_{2,2}^{n} \, Q_{2}\bigl( P_{N}u(t_{n+\frac{1}{2}}) \bigr) \Bigr].
	\end{align*}
	Using Lemma~\ref{lem:operator-est} for $R_{1}^{n}$ and the integral representations for $R_{2,1}^{n}$, $R_{2,2}^{n}$, and $R_{2,3}^{n}$, we obtain the bounds
	\begin{subequations}\begin{align}
			\| R_{1}^{n} \|_{H^{\gamma}}
			\lesssim&\, N^{-m+1} \,
			\| u \|_{L^{\infty}(t_{n},t_{n+1};H^{\gamma+m})}
			\| u \|_{L^{\infty}(t_{n},t_{n+1};H^{\gamma})}^{k-1}, \label{eq:R3} \\
			\| R_{2,1}^{n} \|_{H^{\gamma}}
			\lesssim&\, \tau^{2} \,
			\| \partial_{t}^{3} u \|_{L^{\infty}(t_{n},t_{n+1};H^{\gamma})}, \label{eq:R41} \\
			\| \varepsilon^{2}\partial_{x}^{3} R_{2,2}^{n} \|_{H^{\gamma}}
			\lesssim&\, \varepsilon^{2} \tau^{2} \,
			\| \partial_{t}^{2} u \|_{L^{\infty}(t_{n},t_{n+1};H^{\gamma+3})}, \label{eq:R42} \\
			\| R_{2,3}^{n} \|_{H^{\gamma}}
			\lesssim&\, \tau^{2} \,
			\| \partial_{t}^{2} u \|_{L^{\infty}(t_{n},t_{n+1};H^{\gamma+1})}\nonumber \\
			&\, \cdot \Bigl( \| u \|_{L^{\infty}(t_{n},t_{n+1}; H^{\gamma+1})}
			\| u \|_{L^{\infty}(t_{n},t_{n+1}; H^{\gamma})}^{k-2}
			+ \| u \|_{L^{\infty}(t_{n},t_{n+1}; H^{\gamma})}^{k-1} \Bigr). \label{eq:R43}
\end{align}\end{subequations}
	Under the regularity Assumption~\ref{assump: regularity for CN}, all
	$H^{\gamma}$-norms on the right-hand sides are bounded uniformly in $N$,
	$\tau$, and $\varepsilon$ (with $\varepsilon\le 1$).  Defining
	$R_{2}^{n} := R_{2,1}^{n} + \varepsilon^{2}\partial_{x}^{3}R_{2,2}^{n}
	+ R_{2,3}^{n}$, we therefore have
	\begin{align}
		\| R_{1}^{n} \|_{H^{\gamma}} \lesssim N^{-m+1}, \qquad
		\| R_{2}^{n} \|_{H^{\gamma}} \lesssim \tau^{2}. \label{eq:R-bound}
	\end{align}
	
	\textbf{Step 2.}(Error equation, induction and energy estimate).
	Let $e_{u}^{n} := U_{N}^{n} - u_{N}^{n}$.  Using~\eqref{eq:f-f-3}, the difference of the nonlinear terms expands as
	\begin{align*}
		\frac{1}{k} \partial_{x} \Bigl[ f\bigl( U_{N}^{n+\frac{1}{2}} \bigr)
		- f\bigl( u_{N}^{n+\frac{1}{2}} \bigr) \Bigr]
		= e_{u}^{n+\frac{1}{2}} \,
		Q_{1}\bigl( U_{N}^{n+\frac{1}{2}},\,
		u_{N}^{n+\frac{1}{2}} \bigr)
		+ \partial_{x} e_{u}^{n+\frac{1}{2}} \,
		Q_{2}\bigl( u_{N}^{n+\frac{1}{2}} \bigr)
		=: G_{1}^{n}.
	\end{align*}
	Subtracting the CN scheme~\eqref{scm: CN-FP}
	from~\eqref{eq:UN} yields the error equation
	\begin{equation}\label{eq:error-eq-CN}
		\frac{1}{\tau}\bigl( e_{u}^{n+1} - e_{u}^{n} \bigr)
		+ \varepsilon^{2}\partial_{x}^{3} e_{u}^{n+\frac{1}{2}}
		+ P_{N} G_{1}^{n}
		= R_{1}^{n} + R_{2}^{n}.
	\end{equation} 
	Based on the definition of $M$, we have $\| u \|_{L^{\infty}(0,T;H^{\gamma})} + 1 \leq M$, and proceed by induction.  Suppose that
	\begin{align}
		\| u_{N}^{m} \|_{H^{\gamma}} \le M, \qquad
		0 \le m \le n \le T/\tau - 1. \label{eq:ind-hyp-CN}
	\end{align}
	By Corollary~\ref{crlr:iteration-convergence}, there exists $\tau_{M}>0$ such
	that for $\tau\le\tau_{M}$, $\|u_{N}^{n+1}\|_{H^{\gamma}}\le 2M$.
	Take the $H^{\gamma}$ inner product of~\eqref{eq:error-eq-CN} with
	$e_{u}^{n+\frac{1}{2}}$.  Since
	$\langle \partial_{x}^{3} e_{u}^{n+\frac{1}{2}},
	e_{u}^{n+\frac{1}{2}} \rangle_{H^{\gamma}} = 0$, we obtain
	\begin{align}
		\frac{1}{2\tau}\Bigl( \| e_{u}^{n+1} \|_{H^{\gamma}}^{2}
		- \| e_{u}^{n} \|_{H^{\gamma}}^{2} \Bigr)
		= -\bigl\langle P_{N} G_{1}^{n},\, e_{u}^{n+\frac{1}{2}} \bigr\rangle_{H^{\gamma}}
		+ \bigl\langle R_{1}^{n} + R_{2}^{n},\,
		e_{u}^{n+\frac{1}{2}} \bigr\rangle_{H^{\gamma}}. \label{eq:energy-CN}
	\end{align}
	To formulate compactly, we introduce the auxiliary functional
	\begin{align}
		\mathcal{Q}[w_{1}, w_{2}]
		:= \| \partial_{x} w_{1} \|_{H^{\gamma}}
		\sum_{j=0}^{k-2} \| w_{1} \|_{H^{\gamma}}^{j} \| w_{2} \|_{H^{\gamma}}^{k-2-j}
		+ \| w_{2} \|_{H^{\gamma}}^{k-1}, \qquad w_{1}, w_{2} \in Y_{N}. \label{eq:Q-func}
	\end{align}
	For the first inner product on the right side of \eqref{eq:energy-CN}, applying Lemma~\ref{lem:Kato-Ponce} and the
	definition~\eqref{eq:Q-func} of $\mathcal{Q}$,
	\begin{align}
		\left| \Big\langle P_{N}G_{1}^{n}, e_{u}^{n+\frac{1}{2}} \Big\rangle_{H^{\gamma}} \right| =&\, \left| \Big\langle e_{u}^{n+\frac{1}{2}}, e_{u}^{n+\frac{1}{2}} Q_{1}\left( U_{N}^{n+\frac{1}{2}}, u_{N}^{n+\frac{1}{2}} \right) + \partial_{x} e_{u}^{n+\frac{1}{2}} Q_{2}\left( u_{N}^{n+\frac{1}{2}} \right) \Big\rangle_{H^{\gamma}} \right| \nonumber\\
		\leq&\, \left| \left\langle e_{u}^{n+\frac{1}{2}}, e_{u}^{n+\frac{1}{2}} Q_{1}\left( U_{N}^{n+\frac{1}{2}}, u_{N}^{n+\frac{1}{2}} \right) \right\rangle_{H^{\gamma}} \right| + \left| \left\langle e_{u}^{n+\frac{1}{2}}, \partial_{x} e_{u}^{n+\frac{1}{2}} Q_{2}\left( u_{N}^{n+\frac{1}{2}} \right) \right\rangle_{H^{\gamma}} \right| \nonumber\\
		\lesssim&\, \left\| e_{u}^{n+\frac{1}{2}} \right\|_{H^{\gamma}}^{2}\left( \left\| Q_{1}\left( U_{N}^{n+\frac{1}{2}}, u_{N}^{n+\frac{1}{2}}\right) \right\|_{H^{\gamma}}  + \left\| Q_{2}\left( u_{N}^{n+\frac{1}{2}} \right) \right\|_{H^{\gamma}} \right)  \nonumber\\
		\lesssim&\, \left\| e_{u}^{n+\frac{1}{2}} \right\|_{H^{\gamma}}^{2} \mathcal{Q}\left[ U_{N}^{n+\frac{1}{2}}, u_{N}^{n+\frac{1}{2}} \right]. \label{eq:G3-est}
	\end{align}
	For the second inner product, the Cauchy--Schwarz and Young inequalities give
	\begin{align}
		\bigl| \bigl\langle R_{1}^{n} + R_{2}^{n},\,
		e_{u}^{n+\frac{1}{2}} \bigr\rangle_{H^{\gamma}} \bigr|
		&\le \bigl( \| R_{1}^{n} \|_{H^{\gamma}} + \| R_{2}^{n} \|_{H^{\gamma}} \bigr)
		\, \bigl\| e_{u}^{n+\frac{1}{2}} \bigr\|_{H^{\gamma}} \lesssim \| R_{1}^{n} \|_{H^{\gamma}}^{2}
		+ \| R_{2}^{n} \|_{H^{\gamma}}^{2}
		+ \bigl\| e_{u}^{n+\frac{1}{2}} \bigr\|_{H^{\gamma}}^{2}. \label{eq:residual-est}
	\end{align}
	Inserting~\eqref{eq:G3-est} and~\eqref{eq:residual-est}
	into~\eqref{eq:energy-CN} and using~\eqref{eq:R-bound}, we deduce the
	existence of constants $C_{3}, C_{4} > 0$ such that
	\begin{align}
		\| e_{u}^{n+1} \|_{H^{\gamma}}^{2} - \| e_{u}^{n} \|_{H^{\gamma}}^{2}
		&\le C_{3}\,\tau \,
		\bigl( \| e_{u}^{n+1} \|_{H^{\gamma}}^{2}
		+ \| e_{u}^{n} \|_{H^{\gamma}}^{2} \bigr)
		+ C_{4}\,\tau \left( \tau^{4} + N^{-2m+2} \right). \label{eq:error-rec-CN}
	\end{align}
	
	\textbf{Step 3.}(Gr\"{o}nwall argument and closure of induction).
	Since $u_{N}^{0} = P_{N} I_{2N} u_{0}$,
	The initial error satisfies,
	\begin{align}
		\| e_{u}^{0} \|_{H^{\gamma}}
		= \bigl\| P_{N}u(t_{0}) - P_{N}I_{2N}u(t_{0}) \bigr\|_{H^{\gamma}}
		\le \bigl\| (I_{2N}-I)u(t_{0}) \bigr\|_{H^{\gamma}}
		\lesssim (2N)^{-m} \| u(t_{0}) \|_{H^{\gamma+m}}. \label{eq:init-CN}
	\end{align}
	Applying the discrete Gr\"{o}nwall inequality to~\eqref{eq:error-rec-CN}
	and using~\eqref{eq:init-CN},
	\begin{align*}
		\| e_{u}^{n+1} \|_{H^{\gamma}}
		&\le \Bigl( \| e_{u}^{0} \|_{H^{\gamma}}^{2}
		+ \tau \sum_{m=1}^{n+1}
		C_{4}\bigl( \tau^{4} + N^{-2m+2} \bigr) \Bigr)^{\!1/2}
		\exp\bigl( 2\,C_{3}\,T \bigr) \le \widetilde{C}_{3} \bigl( \tau^{2} + N^{-m+1} \bigr),
	\end{align*}
	where $\widetilde{C}_{3}>0$ depends on $M$, $T$, and the constants
	$C_{3}, C_{4}$, but is independent of $\varepsilon$, $N$, and $\tau$.
	
	Choose $\tau_{1}>0$ and $N_{1}>0$ such that
	$\widetilde{C}_{3}(\tau_{1}^{2} + N_{1}^{-m+1}) \le 1$.  Then for all
	$0<\tau\le\tau_{1}$ and $N\ge N_{1}$,
	\begin{align*}
		\| u_{N}^{n+1} \|_{H^{\gamma}}
		\le \| U_{N}^{n+1} \|_{H^{\gamma}} + \| e_{u}^{n+1} \|_{H^{\gamma}}
		\le (M-1) + 1 = M,
	\end{align*}
	which restores the induction hypothesis~\eqref{eq:ind-hyp-CN} at step $n+1$. By induction,~\eqref{eq:CN-error} holds for all $0\le n\le T/\tau$.
\end{proof}

\section{Analysis of the Lawson-RK scheme}\label{sec:4}

In this section, we analyze the convergence of the fully discrete Lawson-RK scheme~\eqref{scm: RK2-FP}. We first impose the following regularity assumption on the exact solution.
\begin{assumption} \label{assump: regularity for RK2} For $\gamma > \frac{3}{2},m \ge 7$ and $0<T<t_c$, 
	suppose that $u \in L^{\infty}\bigl(0, T; H^{\gamma + m}(\mathbb{T})\bigr)$,
	$\partial_{t} u \in L^{\infty}\bigl(0, T; H^{\gamma + 4}(\mathbb{T})\bigr)$, and
	$\partial_{t}^{2} u \in L^{\infty}\bigl(0, T; H^{\gamma+1}(\mathbb{T})\bigr)$. Moreover, there exists a constant $M >1$ which is independent of $\varepsilon$, such that
	\begin{align*}
		\max_{0\le t \le T} \bigl\{ \lVert u \rVert_{H^{\gamma + m}},\;
		\lVert \partial_{t} u \rVert_{H^{\gamma + 4}},\; \lVert \partial_{t}^{2} u \rVert_{H^{\gamma+1}} \bigr\} \le M-1.
	\end{align*}
\end{assumption}

\begin{theorem}[Convergence of the Lawson-RK scheme]\label{thm:RK2-convergence}
	Assume that the solution of~\eqref{eq: gKdV} satisfies Assumption~\ref{assump: regularity for RK2} and  $\tau \lesssim N^{-2}$.  Let $v_{N}^{n}$ be the numerical solution
	computed via~\eqref{scm: RK2-FP} and $u_{N}^{n} = \fe^{-t_n\varepsilon^{2}\partial_{x}^{3}}v_{N}^{n}$.  Then,  there exist
	$N_{2}>0$ and $\tau_{2}>0$ such that for $N \ge N_{2}$ and $0<\tau\le\tau_{2}$,
	\begin{align}
		\| u(\cdot, t_{n}) - u_{N}^{n} \|_{H^{\gamma}}
		\le C\bigl( \tau^{2} + N^{-m+1} \bigr), \qquad 0 \le n \le T/\tau, \label{eq:RK2-error}
	\end{align}
	where $N_{2}, \tau_{2},C>0$ depend on $M$ and  $T$, but are independent of $\varepsilon$, $N$, and $\tau$.
\end{theorem}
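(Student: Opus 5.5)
We follow the three-stage structure of the proof of Theorem~\ref{thm:CN-convergence}, but work throughout in the twisted variable. Set $v(t):=\fe^{t\varepsilon^{2}\partial_{x}^{3}}u(t)$ and $V_{N}^{n}:=P_{N}v(t_{n})$. Since $\fe^{t\varepsilon^{2}\partial_{x}^{3}}$ is an isometry on every $H^{s}$ and commutes with $P_{N}$ and $\partial_x$, we have
\[
\|u(t_{n})-u_{N}^{n}\|_{H^{\gamma}}\le \|(I-P_{N})u(t_n)\|_{H^\gamma}+\|V_{N}^{n}-v_{N}^{n}\|_{H^{\gamma}}.
\]
The first term is $\lesssim N^{-m}$ by Lemma~\ref{lem:operator-est}, so it suffices to bound $e^{n}:=V_{N}^{n}-v_{N}^{n}$.

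\textbf{Local error.} Write $V_{N}^{n+1}=V_{N}^{n}-\frac{\tau}{2}P_{N}[F(t_{n},V_{N}^{n})+F(t_{n+1},\widetilde V^{n+1})]+\tau\mathcal{L}^{n}$, where $\widetilde V^{n+1}:=V_{N}^{n}-\tau P_{N}F(t_n,V_N^n)$. The defect $\mathcal{L}^{n}$ splits into two parts.
\begin{itemize}
\item A spatial part, coming from $F(t,P_Nv)-F(t,v)$. It is handled exactly as $R_1^n$ in the CN proof via \eqref{eq:f-f-3}, and gives $N^{-m+1}$.
\item A temporal part: the trapezoidal quadrature error of $\int_{t_n}^{t_{n+1}}F(s,v(s))\,ds$, plus the error from using the Euler predictor $\widetilde V^{n+1}$ instead of $V_N^{n+1}$.
\end{itemize}
The key point for $\varepsilon$-uniformity is to never differentiate $F(t,v(t))$ naively in $t$. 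A naive derivative produces factors of $\varepsilon^{2}\partial_x^{3}$ applied to the twisted variable, and these are not small. Instead we use the identity
\[
F(t,v(t))=\tfrac1k\fe^{t\varepsilon^{2}\partial_x^{3}}\partial_x f(u(t)),
\]
so that
\[
\tfrac{d^{2}}{dt^{2}}F(t,v(t))=\fe^{t\varepsilon^2\partial_x^3}\bigl(\varepsilon^{2}\partial_x^{3}+\partial_t\bigr)^{2}\tfrac1k\partial_x f(u).
\]
This expression is controlled by $\|u\|_{H^{\gamma+7}}$, $\|\partial_tu\|_{H^{\gamma+4}}$ and $\|\partial_t^2u\|_{H^{\gamma+1}}$, with $\varepsilon\le1$. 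This is exactly why Assumption~\ref{assump: regularity for RK2} requires $m\ge7$ and $\partial_t u\in H^{\gamma+4}$. The Peano kernel of the trapezoidal rule then gives a local error of order $\tau^{3}$ in $H^{\gamma}$.

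The predictor error is $\tau\bigl(F(t_{n+1},V^{n+1}_N)-F(t_{n+1},\widetilde V^{n+1})\bigr)$, and we need it to be $O(\tau^{3})$. Since $V_N^{n+1}-\widetilde V^{n+1}=O(\tau^2)$ in $H^{\gamma+1}$ (again using the second-derivative bound above), and $F$ is Lipschitz from $H^{\gamma+1}$ to $H^{\gamma}$ on bounded sets uniformly in $t$, this holds.

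\textbf{Stability.} Subtract the scheme from the perturbed equation and expand both stages. At each stage the difference $F(t,a)-F(t,b)$, with $a,b$ in $Y_N$, is written via \eqref{eq:f-f-3} in the untwisted variables $\fe^{-t\varepsilon^2\partial_x^3}a$ and $\fe^{-t\varepsilon^2\partial_x^3}b$. The error then satisfies
\[
e^{n+1}=e^{n}-\tfrac{\tau}{2}(A_1+A_2)+\tau\mathcal{L}^n,
\]
where each $A_i$ contains a derivative term $\partial_x(\cdot)\,Q_2$.

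\textbf{Main obstacle.} The main difficulty is that RK2 is explicit, so the energy identity used for CN is not available. The term $\langle \partial_x e\,Q_2,e\rangle_{H^\gamma}$ is controlled by Lemma~\ref{lem:Kato-Ponce}, but squaring the one-step relation produces terms like $\tau^{2}\|\partial_x e\|^{2}$ and cross terms with $\partial_x^2$. Our plan is as follows. Take the $H^\gamma$ norm squared of the update and treat the first-order part $e^n-\tau(\cdot)$ by Lemma~\ref{lem:Kato-Ponce}. Bound the remaining $\tau^{2}$ terms crudely with the inverse inequality $\|\partial_x w\|_{H^\gamma}\le N\|w\|_{H^\gamma}$ on $Y_N$; under $\tau\lesssim N^{-2}$ one has $\tau^{2}N^{2}\lesssim\tau$. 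Note that each nonlinear stage contains only one derivative. In the predictor difference, the stage error $e_1=e^n-\tau P_N(\cdots)$ satisfies $\|e_1\|_{H^\gamma}\lesssim(1+\tau N)\|e^n\|+\tau\|\mathcal L\|$, and $\tau N\lesssim1$. This yields
\[
\|e^{n+1}\|_{H^\gamma}^2\le(1+C\tau)\|e^n\|_{H^\gamma}^2+C\tau(\tau^4+N^{-2m+2}).
\]
The constants depend on an a priori bound $\|v_N^n\|_{H^\gamma}\le M$.

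\textbf{Closing the argument.} As in Step~3 of the CN proof, we close the a priori bound by induction. The initial error $\|e^0\|_{H^\gamma}\lesssim N^{-m}$ follows from \eqref{eq:init-CN}. Discrete Gr\"onwall then gives the bound, and choosing $\tau_2$ and $N_2$ small (respectively large) enough recovers $\|v_N^{n+1}\|_{H^\gamma}\le M$.
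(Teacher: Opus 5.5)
Your proposal is correct and follows essentially the paper's route: the twisted variable with $V_N^n=P_Nv(t_n)$, a trapezoidal-rule defect plus an Euler-predictor defect bounded uniformly in $\varepsilon$, Lemma~\ref{lem:Kato-Ponce} for the $O(\tau)$ inner products, the inverse inequality with $\tau\lesssim N^{-2}$ for the $O(\tau^2)$ terms, and induction plus Gr\"onwall. Your identity $F(t,v(t))=\frac1k\fe^{t\varepsilon^2\partial_x^3}\partial_x f(u(t))$ produces exactly the terms in the paper's formulas for $\partial_t\mathcal F$ and $\partial_{tt}\mathcal F$, so the ``naive'' differentiation you warn against is what the paper does, and it is harmless because the $\varepsilon^2\partial_x^3$ factors only need to be bounded for $\varepsilon\le1$. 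The remaining differences are bookkeeping: you fold the defect into one squared-norm recursion, whereas the paper splits $e^{n+1}$ into $V_N^{n+1}-\Phi_\tau^{II}(V_N^n)$ and $\Phi_\tau^{II}(V_N^n)-\Phi_\tau^{II}(v_N^n)$ and uses a linear recursion; also, your predictor gap $V_N^{n+1}-\widetilde V^{n+1}$ carries an extra projection error of size $\tau N^{-m+2}$ in $H^{\gamma+1}$ (the paper's $R_3^n$), which is harmless since $\tau N\lesssim1$.
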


Since the twisted-variable transformation $v = \fe^{t\varepsilon^{2}\partial_{x}^{3}}u$ is unitary in $H^{\gamma}$, we have $\|u(t_n)-u_N^n\|_{H^{\gamma}} = \|v(t_n)-v_N^n\|_{H^{\gamma}}$. Hence,  it suffices to analyze the error $e_{v}^{n} := V_{N}^{n} - v_{N}^{n}$, where $V_{N}^{n} := P_{N}v(t_{n})$ is the projection of the exact twisted solution.

For any $w \in Y_{N}$, introduce the two-stage flow
\begin{subequations}\begin{align}
	\Phi_{\tau}^{I}(w)   &:= w - \tau P_{N} F(t_{n}, w), \label{eq:PhiI} \\
	\Phi_{\tau}^{II}(w)  &:= w - \frac{\tau}{2} P_{N}
	\bigl[ F(t_{n}, w) + F(t_{n+1}, \Phi_{\tau}^{I}(w)) \bigr], \label{eq:PhiII}
\end{align}\end{subequations}
and so $v_{N}^{n+1} = \Phi_{\tau}^{II}(v_{N}^{n})$.
The error can then be decomposed as
\begin{align}
	e_{v}^{n+1}
	= \underbrace{V_{N}^{n+1} - \Phi_{\tau}^{II}(V_{N}^{n})}_{\text{local truncation error}}
	\;+\; \underbrace{\Phi_{\tau}^{II}(V_{N}^{n}) - \Phi_{\tau}^{II}(v_{N}^{n})}_{\text{stability}}. \label{eq:error-decomp}
\end{align}
We estimate the two contributions separately.

\begin{lemma}[Local truncation error]\label{lem:local-error-RK}
	Under the regularity Assumption~\ref{assump: regularity for RK2},
	\begin{align}
		\bigl\| V_{N}^{n+1} - \Phi_{\tau}^{II}(V_{N}^{n}) \bigr\|_{H^{\gamma}}
		\le C_{5}\tau \left[ \tau^{2} + \left( 1 + \tau + \tau N \right)N^{-m+1}\right], \label{eq:local-error-bound}
	\end{align}
	where $C_{5}>0$ depends on $M$ and $T$, but is independent of $\varepsilon$, $N$, and $\tau$.
\end{lemma}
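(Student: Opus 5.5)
The plan is to compare one step of the scheme with the exact twisted flow through a quadrature identity, and to express every time derivative of the integrand through $u$ instead of $v$. The second point is what keeps the constants independent of $\varepsilon$. Set $h(s):=\frac1k\partial_x\bigl(u(s)\bigr)^k$ and $g(s):=F(s,v(s))=\fe^{s\varepsilon^2\partial_x^3}h(s)$, so that $\partial_t v=-g$. Since $\fe^{s\varepsilon^2\partial_x^3}$ is unitary on every $H^{s'}$ and commutes with $P_N$ and $\partial_x$, we have
\begin{align*}
g'(s)=\fe^{s\varepsilon^2\partial_x^3}\bigl(\partial_t+\varepsilon^2\partial_x^3\bigr)h(s),\qquad g''(s)=\fe^{s\varepsilon^2\partial_x^3}\bigl(\partial_t+\varepsilon^2\partial_x^3\bigr)^2h(s).
\end{align*}
With $\varepsilon\le1$, the $H^{\gamma}$ norm of $g''$ is controlled by $\|\partial_t^2 h\|_{H^\gamma}+\|\partial_t h\|_{H^{\gamma+3}}+\|h\|_{H^{\gamma+6}}$. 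By the algebra property of $H^{\gamma}$ ($\gamma>\frac12$), this needs only $\partial_t^2u\in H^{\gamma+1}$, $\partial_tu\in H^{\gamma+4}$ and $u\in H^{\gamma+7}$, which is exactly what Assumption~\ref{assump: regularity for RK2} with $m\ge7$ provides. In the same way, $\|g'\|_{H^{\gamma+1}}\lesssim1$ uniformly in $\varepsilon$.

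\textbf{Step 1 (splitting).} Write $v(t_{n+1})=v(t_n)-\int_{t_n}^{t_{n+1}}g(s)\,\mathrm ds$ and apply $P_N$. Then
\begin{align*}
V_N^{n+1}-\Phi_\tau^{II}(V_N^n)
={}&-P_N\Bigl[\int_{t_n}^{t_{n+1}}g\,\mathrm ds-\tfrac\tau2\bigl(g(t_n)+g(t_{n+1})\bigr)\Bigr]
+\tfrac\tau2P_N\bigl[F(t_n,V_N^n)-F(t_n,v(t_n))\bigr]\\
&+\tfrac\tau2P_N\bigl[F(t_{n+1},\Phi_\tau^{I}(V_N^n))-F(t_{n+1},v(t_{n+1}))\bigr].
\end{align*}
The first bracket is the trapezoidal quadrature error. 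It is bounded by $\tau^3\sup\|g''\|_{H^\gamma}\lesssim\tau^3$.

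\textbf{Step 2 (projection terms).} The second term is handled like $R_1^n$ in the proof of Theorem~\ref{thm:CN-convergence}. Using the identity \eqref{eq:f-f-3}, unitarity, and Lemma~\ref{lem:operator-est}, we get $\|F(t,P_Nv)-F(t,v)\|_{H^\gamma}\lesssim N^{-m+1}$, so this term contributes $\tau N^{-m+1}$. For the third term I will use the Lipschitz bound $\|F(t,a)-F(t,b)\|_{H^\gamma}\le C(\|a\|_{H^{\gamma+1}},\|b\|_{H^{\gamma+1}})\|a-b\|_{H^{\gamma+1}}$, which again follows from \eqref{eq:f-f-3}, the algebra property and unitarity. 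It therefore suffices to bound the stage error $\Phi_\tau^I(V_N^n)-v(t_{n+1})$ in $H^{\gamma+1}$, and also to check that $\Phi_\tau^I(V_N^n)$ is bounded in $H^{\gamma+1}$ by a constant depending only on $M$.

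\textbf{Step 3 (stage error).} Decompose
\begin{align*}
\Phi_\tau^I(V_N^n)-v(t_{n+1})=(P_N-I)v(t_{n+1})-\tau P_N\bigl[F(t_n,V_N^n)-F(t_n,v(t_n))\bigr]-P_N\Bigl[\int_{t_n}^{t_{n+1}}\bigl(g(s)-g(t_n)\bigr)\mathrm ds\Bigr].
\end{align*}
The three pieces are bounded as follows.
\begin{enumerate}[label=(\alph*)]
\item The first piece is $\lesssim N^{-m+1}$ in $H^{\gamma+1}$.
\item For the second piece, I will use the inverse inequality on $Y_N$, $\|\cdot\|_{H^{\gamma+1}}\le N\|\cdot\|_{H^\gamma}$, together with the $H^\gamma$ estimate from Step 2. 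This gives $\lesssim\tau N\cdot N^{-m+1}$.
\item The third piece is $\lesssim\tau^2\sup\|g'\|_{H^{\gamma+1}}\lesssim\tau^2$.
\end{enumerate}
Multiplying the resulting stage bound by $\frac{\tau}{2}$ and adding Steps 1 and 2 gives $\tau[\tau^2+(1+\tau+\tau N)N^{-m+1}]$, up to a harmless $\tau^3$ term. This is \eqref{eq:local-error-bound}. The same decomposition shows that $\|\Phi_\tau^I(V_N^n)\|_{H^{\gamma+1}}\le M+1$ once $\tau$ is small and $\tau N\lesssim1$; the latter follows from $\tau\lesssim N^{-2}$.

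\textbf{Main obstacle.} The delicate point is $\varepsilon$-uniformity. A naive Taylor expansion of $F(s,v(s))$ in $s$ differentiates the twisting factor and produces unbounded powers of $\partial_x^3$ acting on $v$, which cannot be controlled uniformly. The remedy is to pull the unitary factor out, $g=\fe^{s\varepsilon^2\partial_x^3}h$, so that every $\varepsilon^2\partial_x^3$ lands on the smooth function $h$ built from $u$. Its cost is six extra spatial derivatives, which explains the requirement $m\ge7$. A second, smaller point is that the stage error must be measured in $H^{\gamma+1}$ because $F$ loses one derivative. The loss from the inverse inequality is what produces the $\tau N$ factor in the bound.
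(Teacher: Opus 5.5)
Your proof is correct and follows essentially the same route as the paper: trapezoidal and left-endpoint quadrature errors for the twisted integrand, bounded uniformly for $\varepsilon\le 1$ using $m\ge 7$; projection errors via \eqref{eq:f-f-3} and Lemma~\ref{lem:operator-est}; and propagation of the first-stage defect through the second stage by a Lipschitz bound losing one derivative, with the inverse inequality on $Y_N$ producing the $\tau N$ factor. The differences are only bookkeeping: you compare with the unprojected $v$ and write $g^{(j)}=\fe^{s\varepsilon^2\partial_x^3}(\partial_t+\varepsilon^2\partial_x^3)^j h$, whereas the paper inserts $P_N$ into the nonlinearity (the residual $R_3^n$) and expands $\partial_t\mathcal F$, $\partial_{tt}\mathcal F$ for $\mathcal F(t)=F(t,P_Nv(t))$ directly, which produces the same terms, so the ``naive'' expansion you warn about is in fact harmless once $\varepsilon\le 1$ is absorbed into the regularity assumption.
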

\begin{proof}
	Applying $P_{N}$ to both sides of~\eqref{eq: gKdV for v} gives
	$\partial_{t}P_{N}v = -P_{N}F(t, v)$.  Inserting the projection inside the
	nonlinear term and using~\eqref{eq:f-f-3}, we obtain
	\begin{align}
		\partial_{t}P_{N}v
		&= -P_{N}F(t, P_{N}v)
		+ \fe^{t\varepsilon^{2}\partial_{x}^{3}} P_{N}
		\Bigl[ \fe^{-t\varepsilon^{2}\partial_{x}^{3}}(P_{N}v - v) \,
		Q_{1}\bigl( \fe^{-t\varepsilon^{2}\partial_{x}^{3}}P_{N}v,\,
		\fe^{-t\varepsilon^{2}\partial_{x}^{3}}v \bigr) \notag \\
		&\qquad + \fe^{-t\varepsilon^{2}\partial_{x}^{3}}\partial_{x}(P_{N}v - v) \,
		Q_{2}\bigl( \fe^{-t\varepsilon^{2}\partial_{x}^{3}}v \bigr) \Bigr]. \label{eq:PN-v}
	\end{align}
	Integrating~\eqref{eq:PN-v} over $[t_{n}, t_{n+1}]$ yields
	\begin{align*}
		V_{N}^{n+1}
		= V_{N}^{n} - \int_{0}^{\tau} P_{N}F\bigl(t_{n}+s, P_{N}v(t_{n}+s)\bigr)\,\mathrm{d}s
		+ R_{3}^{n},
	\end{align*}
	where the residual $R_{3}^{n}$ collects the terms involving $P_{N}v - v$.
	By Lemma~\ref{lem:operator-est} and together with the algebra property of $H^{\gamma}$ ($\gamma>3/2$), we obtain the estimate
	\begin{align}
		\| R_{3}^{n} \|_{H^{\gamma}} \lesssim \tau N^{-m+1} \, \| v \|_{L^{\infty}(t_{n},t_{n+1};H^{\gamma+m})} \| v \|_{L^{\infty}(t_{n},t_{n+1}; H^{\gamma})}^{k-1}. \label{eq:R0-est}
	\end{align}
	An analogous bound 
	\begin{align}
		\| R_{3}^{n} \|_{H^{\gamma+1}} \lesssim (\tau N) N^{-m+1} \, \| v \|_{L^{\infty}(t_{n},t_{n+1};H^{\gamma+m})} \| v \|_{L^{\infty}(t_{n},t_{n+1}; H^{\gamma})}^{k-1}. \label{eq:R0-est-plus1}
	\end{align}
	can be obtained and will be used later for the higher-order estimates.
	
	Set $\mathcal{F}(t) := F(t, P_{N} v(t))$.  Then the exact solution satisfies
	\begin{subequations}\label{eq:VN}
		\begin{align}
			V_{N}^{n+1} &= V_{N}^{n} - \tau P_{N} F(t_{n}, V_{N}^{n}) + R_{4}^{n} + R_{3}^{n}, \label{eq:VN-a} \\
			V_{N}^{n+1} &= V_{N}^{n} - \frac{\tau}{2} P_{N}\bigl( F(t_{n}, V_{N}^{n}) + F(t_{n+1}, V_{N}^{n+1}) \bigr) + R_{5,1}^{n} + R_{3}^{n}, \label{eq:VN-b}
		\end{align}
	\end{subequations}
	where the quadrature residuals are given by
	\begin{align}
		R_{4}^{n} :=&\, \int_{0}^{\tau}
		P_{N}\bigl[ F(t_{n}, V_{N}^{n}) - F(t_{n}+s, P_{N}v(t_{n}+s)) \bigr]\,\mathrm{d}s = -\int_{0}^{\tau} s \int_{0}^{1}
		P_{N}\,\partial_{t}\mathcal{F}(t_{n}+\rho s)\,\mathrm{d}\rho\,\mathrm{d}s, \label{eq:R1} \\[4pt]
		R_{5,1}^{n} :=&\, \int_{0}^{\tau}
		P_{N}\Bigl[ \frac{\tau-s}{\tau}F(t_{n}, V_{N}^{n})
		+ \frac{s}{\tau}F(t_{n+1}, V_{N}^{n+1})
		- F(t_{n}+s, P_{N}v(t_{n}+s)) \Bigr]\,\mathrm{d}s \notag \\
		=&\, \int_{0}^{\tau} \frac{(\tau-s)s^{2}}{2\tau}
		\int_{0}^{1} (1-\rho) P_{N}\,\partial_{tt}\mathcal{F}
		\bigl(t_{n}+(1-\rho)s\bigr)\,\mathrm{d}\rho\,\mathrm{d}s \notag \\
		&\, + \int_{0}^{\tau} \frac{(\tau-s)^{2}s}{2\tau}
		\int_{0}^{1} (1-\rho) P_{N}\,\partial_{tt}\mathcal{F}
		\bigl(t_{n}+s+\rho(\tau-s)\bigr)\,\mathrm{d}\rho\,\mathrm{d}s, \label{eq:R21}
	\end{align}
	with the time derivatives of $\mathcal{F}$
	\begin{align*}
		\partial_{t}\mathcal{F}(t) =&\, \frac{\varepsilon^{2}}{k} \fe^{t \varepsilon^{2}\partial_{x}^{3}} \partial_{x}^{4} \left( \fe^{-t \varepsilon^{2}\partial_{x}^{3}} P_{N} v(t) \right)^{k} + \fe^{t \varepsilon^{2}\partial_{x}^{3}} \partial_{x} \Big[ \left( \fe^{-t \varepsilon^{2}\partial_{x}^{3}} P_{N} v(t) \right)^{k-1}\Big( -\varepsilon^{2}\partial_{x}^{3} \fe^{-t \varepsilon^{2}\partial_{x}^{3}} P_{N}v(t) \\
		&\, + \fe^{-t \varepsilon^{2}\partial_{x}^{3}} P_{N} \partial_{t} v(t) \Big) \Big], \\
		\partial_{tt}\mathcal{F}(t) =&\, \frac{\varepsilon^{4}}{k} \fe^{t \varepsilon^{2}\partial_{x}^{3}} \partial_{x}^{7} \left( \fe^{-t \varepsilon^{2}\partial_{x}^{3}} P_{N} v(t) \right)^{k} + \frac{\varepsilon^{2}}{k} \fe^{t \varepsilon^{2}\partial_{x}^{3}} \partial_{x}^{4} \Big[ \left( \fe^{-t \varepsilon^{2}\partial_{x}^{3}} P_{N} v(t) \right)^{k-1}\Big( -\varepsilon^{2}\partial_{x}^{3} \fe^{-t \varepsilon^{2}\partial_{x}^{3}} P_{N}v(t) \\
		&\, + \fe^{-t \varepsilon^{2}\partial_{x}^{3}} P_{N} \partial_{t} v(t) \Big) \Big] + \varepsilon^{2} \fe^{t \varepsilon^{2}\partial_{x}^{3}} \partial_{x}^{4} \Big[ \left( \fe^{-t \varepsilon^{2}\partial_{x}^{3}} P_{N} v(t) \right)^{k-1}\Big( -\varepsilon^{2}\partial_{x}^{3} \fe^{-t \varepsilon^{2}\partial_{x}^{3}} P_{N}v(t) \\
		&\, + \fe^{-t \varepsilon^{2}\partial_{x}^{3}} P_{N} \partial_{t} v(t) \Big) \Big] + (k-1) \fe^{t \varepsilon^{2}\partial_{x}^{3}}\partial_{x} \Big[ \left( \fe^{-t \varepsilon^{2}\partial_{x}^{3}} P_{N} v(t) \right)^{k-2}\Big( -\varepsilon^{2}\partial_{x}^{3} \fe^{-t \varepsilon^{2}\partial_{x}^{3}} P_{N}v(t) \\
		&\, + \fe^{-t \varepsilon^{2}\partial_{x}^{3}} P_{N} \partial_{t} v(t) \Big)^{2} \Big] + \fe^{t \varepsilon^{2}\partial_{x}^{3}} \partial_{x} \Big[ \left( \fe^{-t \varepsilon^{2}\partial_{x}^{3}} P_{N} v(t) \right)^{k-1}\Big( -\varepsilon^{4}\partial_{x}^{6} \fe^{-t \varepsilon^{2}\partial_{x}^{3}} P_{N}v(t) \\
		&\, - 2\varepsilon^{2}\partial_{x}^{3} \fe^{-t \varepsilon^{2}\partial_{x}^{3}} P_{N} \partial_{t} v(t) + \fe^{-t \varepsilon^{2}\partial_{x}^{3}} P_{N} \partial_{tt} v(t) \Big) \Big].
	\end{align*}
	From~\eqref{eq:R1}--\eqref{eq:R21}, the isometry of $\fe^{\pm t\varepsilon^{2}\partial_{x}^{3}}$ and the algebra property of $H^{\gamma}$, we obtain 
	\begin{align}
		\left\| R_{4}^{n} \right\|_{H^{\gamma}} \lesssim&\, \tau^{2} \left\| \partial_{t} \mathcal{F}(t) \right\|_{L^{\infty}\left( t_{n}, t_{n+1}; H^{\gamma} \right)} \nonumber\\
		\lesssim &\, \tau^{2} \Big( \varepsilon^{2} \left\| P_{N}v \right\|_{L^{\infty}\left( t_{n}, t_{n+1}; H^{\gamma+4} \right)}^{k} + \varepsilon^{2}\left\| P_{N}v \right\|_{L^{\infty}\left( t_{n}, t_{n+1}; H^{\gamma+1} \right)}^{k-1}\left\| P_{N}v \right\|_{L^{\infty}\left( t_{n}, t_{n+1}; H^{\gamma+4} \right)} \nonumber\\
		&\, + \left\| P_{N}v \right\|_{L^{\infty}\left( t_{n}, t_{n+1}; H^{\gamma+1} \right)}^{k-1} \left\| P_{N}\partial_{t} v \right\|_{L^{\infty}\left( t_{n}, t_{n+1}; H^{\gamma+1} \right)} \Big), \nonumber\\
		\left\| R_{5,1}^{n} \right\|_{H^{\gamma}} \lesssim&\, \tau^{3} \left\| \partial_{tt} \mathcal{F}(t) \right\|_{L^{\infty}\left( t_{n}, t_{n+1}; H^{\gamma} \right)}   \nonumber\\
		\lesssim&\, \tau^{3}\varepsilon^{4}\Big( \left\| P_{N}v \right\|_{L^{\infty}\left( t_{n}, t_{n+1}; H^{\gamma+7} \right)}^{k} + \left\| P_{N}v \right\|_{L^{\infty}\left( t_{n}, t_{n+1}; H^{\gamma+4} \right)}^{k-1}\left\| P_{N}v \right\|_{L^{\infty}\left( t_{n}, t_{n+1}; H^{\gamma+7} \right)}  \nonumber\\
		&\, + \left\| P_{N}v \right\|_{L^{\infty}\left( t_{n}, t_{n+1}; H^{\gamma+1} \right)}^{k-2} \left\| P_{N}v \right\|_{L^{\infty}\left( t_{n}, t_{n+1}; H^{\gamma+4} \right)}^{2} \nonumber\\
		&\, + \left\| P_{N}v \right\|_{L^{\infty}\left( t_{n}, t_{n+1}; H^{\gamma+1} \right)}^{k-1} \left\| P_{N}v \right\|_{L^{\infty}\left( t_{n}, t_{n+1}; H^{\gamma+7} \right)}  \Big) \nonumber\\
		&\, + \tau^{3}\varepsilon^{2} \Big( \left\| P_{N}v \right\|_{L^{\infty}\left( t_{n}, t_{n+1}; H^{\gamma+4} \right)}^{k-1} + \left\| P_{N}v \right\|_{L^{\infty}\left( t_{n}, t_{n+1}; H^{\gamma+1} \right)}^{k-1}  \Big) \left\| P_{N}\partial_{t}v \right\|_{L^{\infty}\left( t_{n}, t_{n+1}; H^{\gamma+4} \right)} \nonumber\\
		&\, + \tau^{3} \left\| P_{N}v \right\|_{L^{\infty}\left( t_{n}, t_{n+1}; H^{\gamma+1} \right)}^{k-2} \Big( \left\| P_{N}\partial_{t}v \right\|_{L^{\infty}\left( t_{n}, t_{n+1}; H^{\gamma+1} \right)}^{2} \nonumber\\
		&\, + \left\| P_{N}v \right\|_{L^{\infty}\left( t_{n}, t_{n+1}; H^{\gamma+1} \right)}\left\| P_{N}\partial_{tt}v \right\|_{L^{\infty}\left( t_{n}, t_{n+1}; H^{\gamma+1} \right)}  \Big). \label{eq:R21-est}
	\end{align}
	Under the regularity Assumption~\ref{assump: regularity for RK2}, all
	$H^{\gamma}$-norms appearing above are bounded uniformly in $N$, $\tau$,
	and $\varepsilon$ (note that $\varepsilon\le 1$).  Consequently, $\| R_{4}^{n} \|_{H^{\gamma}} \lesssim \tau^{2}$ and $\| R_{5,1}^{n} \|_{H^{\gamma}} \lesssim \tau^{3}$.
	
	Subtracting $\Phi_{\tau}^{I}(V_{N}^{n})$ from~\eqref{eq:VN-a} and
	$\Phi_{\tau}^{II}(V_{N}^{n})$ from~\eqref{eq:VN-b} yields
	\begin{subequations}\begin{align}
		V_{N}^{n+1} - \Phi_{\tau}^{I}(V_{N}^{n}) &= R_{4}^{n} + R_{3}^{n}, \label{eq:diff-PhiI} \\
		V_{N}^{n+1} - \Phi_{\tau}^{II}(V_{N}^{n})
		&= -\frac{\tau}{2} P_{N}\bigl[ F(t_{n+1}, V_{N}^{n+1})
		- F(t_{n+1}, \Phi_{\tau}^{I}(V_{N}^{n})) \bigr]
		+ R_{5,1}^{n} + R_{3}^{n}. \label{eq:diff-PhiII}
	\end{align}\end{subequations}
	The difference of the nonlinear terms in~\eqref{eq:diff-PhiII} is expanded using~\eqref{eq:f-f-3} together with~\eqref{eq:diff-PhiI}:
	\begin{align}
		&\, F(t_{n+1}, V_{N}^{n+1}) - F(t_{n+1}, \Phi_{\tau}^{I}(V_{N}^{n})) \notag \\
		=&\, \fe^{t_{n+1}\varepsilon^{2}\partial_{x}^{3}}
		\Bigl[ \fe^{-t_{n+1}\varepsilon^{2}\partial_{x}^{3}}(R_{4}^{n}+R_{3}^{n}) \,
		Q_{1}\bigl( \fe^{-t_{n+1}\varepsilon^{2}\partial_{x}^{3}} V_{N}^{n+1},\,
		\fe^{-t_{n+1}\varepsilon^{2}\partial_{x}^{3}} \Phi_{\tau}^{I}(V_{N}^{n}) \bigr) \notag \\
		&\, + \fe^{-t_{n+1}\varepsilon^{2}\partial_{x}^{3}}
		\partial_{x}(R_{4}^{n}+R_{3}^{n}) \,
		Q_{2}\bigl( \fe^{-t_{n+1}\varepsilon^{2}\partial_{x}^{3}}
		\Phi_{\tau}^{I}(V_{N}^{n}) \bigr) \Bigr]
		=: R_{5,2}^{n}. \label{eq:R22}
	\end{align}
	A direct estimate gives
	\begin{align}
		\| R_{5,2}^{n} \|_{H^{\gamma}}
		&\lesssim \bigl( \| R_{4}^{n} \|_{H^{\gamma}} + \| R_{3}^{n} \|_{H^{\gamma}} \bigr)
		\| \partial_{x} V_{N}^{n+1} \|_{H^{\gamma}}
		\sum_{j=0}^{k-2} \| V_{N}^{n+1} \|_{H^{\gamma}}^{j}
		\| \Phi_{\tau}^{I}(V_{N}^{n}) \|_{H^{\gamma}}^{k-2-j} \notag \\
		&\quad + \bigl( \| R_{4}^{n} \|_{H^{\gamma+1}} + \| R_{3}^{n} \|_{H^{\gamma+1}} \bigr)
		\| \Phi_{\tau}^{I}(V_{N}^{n}) \|_{H^{\gamma}}^{k-1}. \label{eq:R22-est}
	\end{align}
	Substituting~\eqref{eq:R22} into~\eqref{eq:diff-PhiII}, we obtain
	\begin{align*}
		V_{N}^{n+1} - \Phi_{\tau}^{II}(V_{N}^{n})
		= -\frac{\tau}{2} P_{N} R_{5,2}^{n} + R_{5,1}^{n} + R_{3}^{n}.
	\end{align*}
	Combining~\eqref{eq:R0-est}, \eqref{eq:R0-est-plus1}, \eqref{eq:R21-est}, and~\eqref{eq:R22-est} yields the desired bound~\eqref{eq:local-error-bound}.
\end{proof}

\begin{lemma}[Stability of the Lawson-RK flow]\label{lem:stability-RK}
	Under the assumption of Theorem~\ref{thm:RK2-convergence}, and the induction hypothesis that $\|v_{N}^{n}\|_{H^{\gamma}}$ and $\|V_{N}^{n}\|_{H^{\gamma}}$ are bounded by a constant $M$ (see the proof of Theorem~\ref{thm:RK2-convergence}), there exists $C_{6}>0$ such that
	\begin{align}
		\bigl\| \Phi_{\tau}^{II}(V_{N}^{n}) - \Phi_{\tau}^{II}(v_{N}^{n}) \bigr\|_{H^{\gamma}}
		\le \bigl( 1 + C_{6}\,\tau \bigr) \| e_{v}^{n} \|_{H^{\gamma}}, \label{eq:stab-bound}
	\end{align}
	where $C_{6}$ depends on $M$, but is independent of $\varepsilon$, $N$, and $\tau$.
\end{lemma}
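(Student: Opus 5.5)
The plan is an energy-type argument in $H^{\gamma}$, the same mechanism as in the CN analysis. A direct Lipschitz bound loses a derivative: on $Y_{N}$ we only have $\|P_{N}F(t,w_{1})-P_{N}F(t,w_{2})\|_{H^{\gamma}}\lesssim N\|w_{1}-w_{2}\|_{H^{\gamma}}$. This gives $1+C\tau N$, not $1+C\tau$. So I will estimate the \emph{square} of the difference and let the Kato--Ponce inequality (Lemma~\ref{lem:Kato-Ponce}) absorb the derivative in the leading $\mathcal{O}(\tau)$ cross terms. The CFL condition $\tau\lesssim N^{-2}$ will control all remaining terms, which are quadratic in $\tau$.

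\textbf{Setup.} Set $D_{0}:=P_{N}[F(t_{n},V_{N}^{n})-F(t_{n},v_{N}^{n})]$ and $e_{1}:=\Phi_{\tau}^{I}(V_{N}^{n})-\Phi_{\tau}^{I}(v_{N}^{n})=e_{v}^{n}-\tau D_{0}$. Set also $D_{1}:=P_{N}[F(t_{n+1},\Phi_{\tau}^{I}(V_{N}^{n}))-F(t_{n+1},\Phi_{\tau}^{I}(v_{N}^{n}))]$. Then
\begin{align*}
\Phi_{\tau}^{II}(V_{N}^{n})-\Phi_{\tau}^{II}(v_{N}^{n})=e_{v}^{n}-\tfrac{\tau}{2}(D_{0}+D_{1}).
\end{align*}
Expanding the squared norm gives $\|e_{v}^{n}\|_{H^{\gamma}}^{2}-\tau\langle e_{v}^{n},D_{0}+D_{1}\rangle_{H^{\gamma}}+\frac{\tau^{2}}{4}\|D_{0}+D_{1}\|_{H^{\gamma}}^{2}$.

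\textbf{Preliminary bounds.}
\begin{itemize}
\item By the induction hypothesis, the algebra property of $H^{\gamma}$, the isometry of $\fe^{\pm t\varepsilon^{2}\partial_{x}^{3}}$, and the inverse inequality $\|\partial_{x}w\|_{H^{\gamma}}\le \frac{N}{2}\|w\|_{H^{\gamma}}$ on $Y_{N}$, we get $\|D_{0}\|_{H^{\gamma}}\lesssim N\|e_{v}^{n}\|_{H^{\gamma}}$.
\item Similarly $\|P_{N}F(t_{n},w)\|_{H^{\gamma}}\lesssim N$ for $w\in\{V_{N}^{n},v_{N}^{n}\}$. Hence $\|\Phi_{\tau}^{I}(w)\|_{H^{\gamma}}\le M+C\tau N\le 2M$ for small $\tau$.
\item Therefore $\|e_{1}\|_{H^{\gamma}}\le(1+C\tau N)\|e_{v}^{n}\|_{H^{\gamma}}\lesssim\|e_{v}^{n}\|_{H^{\gamma}}$, and $\|D_{1}\|_{H^{\gamma}}\lesssim N\|e_{1}\|_{H^{\gamma}}\lesssim N\|e_{v}^{n}\|_{H^{\gamma}}$.
\end{itemize}
All constants here depend only on $M$, not on $\varepsilon$.

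\textbf{The $D_{0}$ cross term.} Write $w:=\fe^{-t_{n}\varepsilon^{2}\partial_{x}^{3}}e_{v}^{n}$ and use \eqref{eq:f-f-3} to get
\begin{align*}
D_{0}=\fe^{t_{n}\varepsilon^{2}\partial_{x}^{3}}P_{N}\bigl[w\,Q_{1}(\cdot,\cdot)+\partial_{x}w\,Q_{2}(\cdot)\bigr].
\end{align*}
The group commutes with $J^{\gamma}$ and is unitary, and $P_{N}$ can be dropped against $e_{v}^{n}\in Y_{N}$. So $\langle e_{v}^{n},D_{0}\rangle_{H^{\gamma}}=\langle w,\,wQ_{1}+\partial_{x}wQ_{2}\rangle_{H^{\gamma}}$. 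Exactly as in \eqref{eq:G3-est}, Lemma~\ref{lem:Kato-Ponce} bounds this by $C\|e_{v}^{n}\|_{H^{\gamma}}^{2}$. The coefficients $Q_{1}$ contain $\partial_{x}\fe^{-t_{n}\varepsilon^{2}\partial_{x}^{3}}V_{N}^{n}$, whose $H^{\gamma}$ norm is controlled by Assumption~\ref{assump: regularity for RK2}. The $Q_{2}$ coefficient is real-valued and bounded in $H^{\gamma}$.

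\textbf{The $D_{1}$ cross term.} Split $\langle e_{v}^{n},D_{1}\rangle_{H^{\gamma}}=\langle e_{1},D_{1}\rangle_{H^{\gamma}}+\tau\langle D_{0},D_{1}\rangle_{H^{\gamma}}$. The first piece is handled by the same Kato--Ponce argument at time $t_{n+1}$ with $e_{1}$ in place of $e_{v}^{n}$, giving $\lesssim\|e_{1}\|_{H^{\gamma}}^{2}\lesssim\|e_{v}^{n}\|_{H^{\gamma}}^{2}$.

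This is the main obstacle. In the $Q_{1}$ coefficient, $\partial_{x}\Phi_{\tau}^{I}(V_{N}^{n})$ must be bounded in $H^{\gamma}$ uniformly. I plan to use $\Phi_{\tau}^{I}(V_{N}^{n})=V_{N}^{n+1}-R_{4}^{n}-R_{3}^{n}$ together with the $H^{\gamma+1}$ bounds available from the proof of Lemma~\ref{lem:local-error-RK}. That proof gives $\|R_{3}^{n}\|_{H^{\gamma+1}}\lesssim \tau N\cdot N^{-m+1}$, and $R_{4}^{n}$ involves $\partial_{t}\mathcal{F}$ in $H^{\gamma+1}$, which needs $m\ge 5$. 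Both are available under Assumption~\ref{assump: regularity for RK2}. If this ordering of arguments fails, the fallback is to arrange the expansion \eqref{eq:f-f-3} so that the differentiated factor falls on the exact-solution side.

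\textbf{Remaining terms and conclusion.} The leftover pieces are $\tau^{2}|\langle D_{0},D_{1}\rangle_{H^{\gamma}}|$ and $\tau^{2}\|D_{0}+D_{1}\|_{H^{\gamma}}^{2}$. Both are $\lesssim\tau^{2}N^{2}\|e_{v}^{n}\|_{H^{\gamma}}^{2}\lesssim\tau\|e_{v}^{n}\|_{H^{\gamma}}^{2}$ by $\tau\lesssim N^{-2}$. Collecting everything gives $\|\Phi_{\tau}^{II}(V_{N}^{n})-\Phi_{\tau}^{II}(v_{N}^{n})\|_{H^{\gamma}}^{2}\le(1+C\tau)\|e_{v}^{n}\|_{H^{\gamma}}^{2}$. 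Taking square roots, using $\sqrt{1+C\tau}\le 1+C\tau/2$, yields \eqref{eq:stab-bound} with $C_{6}$ independent of $\varepsilon$, $N$, $\tau$.
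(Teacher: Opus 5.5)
Your proposal is correct and follows the paper's proof in substance: expand the squared $H^{\gamma}$ norm, control the $\mathcal{O}(\tau)$ cross terms by Lemma~\ref{lem:Kato-Ponce}, and absorb everything else with the Bernstein inequality and $\tau N^{2}\lesssim 1$. The only bookkeeping difference is that you center the second-stage cross term on $e_{1}$ via $e_{v}^{n}=e_{1}+\tau D_{0}$, whereas the paper expands $G_{3}^{n}$ around $e_{v}^{n}$ as $G_{3,1}^{n}-\tau G_{3,2}^{n}$. The step you flag as the main obstacle is immediate: $\|\Phi_{\tau}^{I}(V_{N}^{n})\|_{H^{\gamma+1}}\le\|V_{N}^{n}\|_{H^{\gamma+1}}+\tau\|F(t_{n},V_{N}^{n})\|_{H^{\gamma+1}}\lesssim M+\tau M^{k}$ by the algebra property and $m\ge 2$, so the detour through $V_{N}^{n+1}-R_{4}^{n}-R_{3}^{n}$ is not needed.
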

\begin{proof}
	We proceed in two stages, corresponding to the two stages of the Lawson-RK flow. 
    
    \textbf{Stage 1.}  From~\eqref{eq:PhiI},
	\begin{align}
		\Phi_{\tau}^{I}(V_{N}^{n}) - \Phi_{\tau}^{I}(v_{N}^{n})
		= e_{v}^{n} - \tau P_{N} \bigl[ F(t_{n}, V_{N}^{n}) - F(t_{n}, v_{N}^{n}) \bigr]
		=: e_{v}^{n} - \tau P_{N} G_{2}^{n}. \label{eq:PhiI-diff}
	\end{align}
	Using~\eqref{eq:f-f-3}, $G_{2}^{n}$ expands as
	\begin{align*}
		G_{2}^{n}
		&= \fe^{t_{n}\varepsilon^{2}\partial_{x}^{3}}
		\Bigl[ \fe^{-t_{n}\varepsilon^{2}\partial_{x}^{3}} e_{v}^{n} \,
		Q_{1}\bigl( \fe^{-t_{n}\varepsilon^{2}\partial_{x}^{3}} V_{N}^{n},\,
		\fe^{-t_{n}\varepsilon^{2}\partial_{x}^{3}} v_{N}^{n} \bigr) + \fe^{-t_{n}\varepsilon^{2}\partial_{x}^{3}} \partial_{x} e_{v}^{n} \,
		Q_{2}\bigl( \fe^{-t_{n}\varepsilon^{2}\partial_{x}^{3}} v_{N}^{n} \bigr) \Bigr].
	\end{align*}
	Taking the $H^{\gamma}$-norm squared of~\eqref{eq:PhiI-diff},
	\begin{align}
		\| \Phi_{\tau}^{I}(V_{N}^{n}) - \Phi_{\tau}^{I}(v_{N}^{n}) \|_{H^{\gamma}}^{2}
		= \| e_{v}^{n} \|_{H^{\gamma}}^{2}
		- 2\tau \langle e_{v}^{n}, P_{N} G_{2}^{n} \rangle_{H^{\gamma}}
		+ \tau^{2} \| P_{N} G_{2}^{n} \|_{H^{\gamma}}^{2}. \label{eq:PhiI-sq}
	\end{align}
	For the inner product term, applying Lemma~\ref{lem:Kato-Ponce} and the
	definition~\eqref{eq:Q-func} of $\mathcal{Q}$ yields
	\begin{align}
		\left\langle e_{v}^{n}, P_{N} G_{2}^{n} \right\rangle_{H^{\gamma}} =&\, \left\langle \fe^{-t_{n}\varepsilon^{2}\partial_{x}^{3}}e_{v}^{n}, \fe^{-t_{n}\varepsilon^{2}\partial_{x}^{3}} e_{v}^{n} \cdot Q_{1}\left( \fe^{-t_{n}\varepsilon^{2}\partial_{x}^{3}} V_{N}^{n}, \fe^{-t_{n}\varepsilon^{2}\partial_{x}^{3}} v_{N}^{n} \right) \right\rangle_{H^{\gamma}}  \nonumber\\
		&\, + \left\langle \fe^{-t_{n}\varepsilon^{2}\partial_{x}^{3}}e_{v}^{n}, \fe^{-t_{n}\varepsilon^{2}\partial_{x}^{3}} \partial_{x} e_{v}^{n} \cdot Q_{2}\left( \fe^{-t_{n}\varepsilon^{2}\partial_{x}^{3}} v_{N}^{n} \right) \right\rangle_{H^{\gamma}} \lesssim \left\| e_{v}^{n} \right\|_{H^{\gamma}}^{2} \mathcal{Q}[V_{N}^{n}, v_{N}^{n}]. \label{eq:inner-G1}
	\end{align}
	For the quadratic term, the Bernstein inequality $\|\partial_{x} w\|_{H^{\gamma}}
	\lesssim N \|w\|_{H^{\gamma}}$ on $Y_{N}$ gives
	\begin{align}
		\| P_{N} G_{2}^{n} \|_{H^{\gamma}}^{2}
		\lesssim N^{2} \| e_{v}^{n} \|_{H^{\gamma}}^{2}
		\bigl( \mathcal{Q}[V_{N}^{n}, v_{N}^{n}] \bigr)^{2}. \label{eq:quad-G1}
	\end{align}
	Substituting~\eqref{eq:inner-G1}--\eqref{eq:quad-G1} into~\eqref{eq:PhiI-sq}, we find
	\begin{align}
		\| \Phi_{\tau}^{I}(V_{N}^{n}) - \Phi_{\tau}^{I}(v_{N}^{n}) \|_{H^{\gamma}}^{2}
		- \| e_{v}^{n} \|_{H^{\gamma}}^{2}
		\lesssim \tau \| e_{v}^{n} \|_{H^{\gamma}}^{2} \,
		\mathcal{Q}[V_{N}^{n}, v_{N}^{n}]
		\bigl( 1 + \tau N^{2} \, \mathcal{Q}[V_{N}^{n}, v_{N}^{n}] \bigr). \label{eq:PhiI-final}
	\end{align}
	
	\textbf{Stage 2.}  From~\eqref{eq:PhiII},
	\begin{align}
		\Phi_{\tau}^{II}(V_{N}^{n}) - \Phi_{\tau}^{II}(v_{N}^{n})
		= e_{v}^{n} - \frac{\tau}{2} P_{N} G_{2}^{n}
		- \frac{\tau}{2} P_{N} G_{3}^{n}, \label{eq:PhiII-diff}
	\end{align}
	where $G_{3}^{n} := F(t_{n+1}, \Phi_{\tau}^{I}(V_{N}^{n})) - F(t_{n+1}, \Phi_{\tau}^{I}(v_{N}^{n}))$.
	Expanding $G_{3}^{n}$ via~\eqref{eq:f-f-3} and inserting~\eqref{eq:PhiI-diff},
	\begin{align}
		G_{3}^{n}
		=&\, \fe^{t_{n+1}\varepsilon^{2}\partial_{x}^{3}}
		\Bigl[ \fe^{-t_{n+1}\varepsilon^{2}\partial_{x}^{3}} e_{v}^{n} \,
		Q_{1}\bigl( \fe^{-t_{n+1}\varepsilon^{2}\partial_{x}^{3}} \Phi_{\tau}^{I}(V_{N}^{n}),\,
		\fe^{-t_{n+1}\varepsilon^{2}\partial_{x}^{3}} \Phi_{\tau}^{I}(v_{N}^{n}) \bigr) \notag \\
		&\, + \fe^{-t_{n+1}\varepsilon^{2}\partial_{x}^{3}} \partial_{x} e_{v}^{n} \,
		Q_{2}\bigl( \fe^{-t_{n+1}\varepsilon^{2}\partial_{x}^{3}}
		\Phi_{\tau}^{I}(v_{N}^{n}) \bigr) \Bigr] \notag \\
		&\, - \tau \fe^{t_{n+1}\varepsilon^{2}\partial_{x}^{3}}
		\Bigl[ \fe^{-t_{n+1}\varepsilon^{2}\partial_{x}^{3}} P_{N} G_{2}^{n} \,
		Q_{1}\bigl( \fe^{-t_{n+1}\varepsilon^{2}\partial_{x}^{3}}
		\Phi_{\tau}^{I}(V_{N}^{n}),\,
		\fe^{-t_{n+1}\varepsilon^{2}\partial_{x}^{3}}
		\Phi_{\tau}^{I}(v_{N}^{n}) \bigr) \notag \\
		&\, + \fe^{-t_{n+1}\varepsilon^{2}\partial_{x}^{3}}
		\partial_{x}(P_{N} G_{2}^{n}) \,
		Q_{2}\bigl( \fe^{-t_{n+1}\varepsilon^{2}\partial_{x}^{3}}
		\Phi_{\tau}^{I}(v_{N}^{n}) \bigr) \Bigr]
		=: G_{3,1}^{n} - \tau G_{3,2}^{n}. \label{eq:G2-decomp}
	\end{align}
	Consequently,
	\begin{align*}
		\Phi_{\tau}^{II}(V_{N}^{n}) - \Phi_{\tau}^{II}(v_{N}^{n})
		= e_{v}^{n} - \frac{\tau}{2} P_{N}(G_{2}^{n} + G_{3,1}^{n})
		+ \frac{\tau^{2}}{2} P_{N} G_{3,2}^{n}.
	\end{align*}
	Taking the $H^{\gamma}$-norm squared and expanding the right hand side, we obtain
	\begin{align}
		\left\| \Phi_{\tau}^{II}(V_{N}^{n}) - \Phi_{\tau}^{II}(v_{N}^{n}) \right\|_{H^{\gamma}}^{2} =&\, \left\| e_{v}^{n} \right\|_{H^{\gamma}}^{2} - \tau \left\langle e_{v}^{n}, P_{N}(G_{2}^{n} + G_{3,1}^{n}) \right\rangle_{H^{\gamma}} + \tau^{2} \bigg( \frac{1}{4}\left\| P_{N} \left( G_{2}^{n} + G_{3,1}^{n} \right) \right\|_{H^{\gamma}}^{2} \nonumber\\
		&\, + \left\langle e_{v}^{n}, P_{N} G_{3,2}^{n} \right\rangle_{H^{\gamma}} \bigg) - \frac{\tau^{3}}{2}\left\langle P_{N} \left( G_{2}^{n} + G_{3,1}^{n} \right), P_{N} G_{3,2}^{n} \right\rangle_{H^{\gamma}} + \frac{\tau^{4}}{4}\left\| P_{N} G_{3,2}^{n} \right\|_{H^{\gamma}}^{2}. \label{eq:PhiII-sq}
	\end{align}
	Each term is estimated using Lemma~\ref{lem:Kato-Ponce}, the Bernstein inequality, and the induction hypothesis.  We summarize the resulting bounds:
	\begin{align}
		\langle e_{v}^{n}, P_{N}(G_{2}^{n}+G_{3,1}^{n}) \rangle_{H^{\gamma}}
		&\lesssim \| e_{v}^{n} \|_{H^{\gamma}}^{2}
		\bigl( \mathcal{Q}[V_{N}^{n}, v_{N}^{n}]
		+ \mathcal{Q}[\Phi_{\tau}^{I}(V_{N}^{n}), \Phi_{\tau}^{I}(v_{N}^{n})] \bigr), \label{eq:inner-sum} \\[4pt]
		\tau^{2} \langle e_{v}^{n}, P_{N} G_{3,2}^{n} \rangle_{H^{\gamma}}
		&\lesssim \tau \| e_{v}^{n} \|_{H^{\gamma}}^{2}
		+ \tau (\tau N^{2})^{2} \| e_{v}^{n} \|_{H^{\gamma}}^{2}
		\bigl( \mathcal{Q}[V_{N}^{n}, v_{N}^{n}] \bigr)^{2}
		\bigl( \mathcal{Q}[\Phi_{\tau}^{I}(V_{N}^{n}), \Phi_{\tau}^{I}(v_{N}^{n})] \bigr)^{2}, \label{eq:inner-G22} \\[4pt]
		\text{remaining terms}
		&\lesssim \tau (\tau N^{2}) \| e_{v}^{n} \|_{H^{\gamma}}^{2}
		\Bigl( \bigl( \mathcal{Q}[V_{N}^{n}, v_{N}^{n}] \bigr)^{2}
		+ \bigl( \mathcal{Q}[\Phi_{\tau}^{I}(V_{N}^{n}), \Phi_{\tau}^{I}(v_{N}^{n})] \bigr)^{2} \Bigr) \notag \\
		&\quad + \tau^{2} (\tau N^{2})^{2} \| e_{v}^{n} \|_{H^{\gamma}}^{2}
		\bigl( \mathcal{Q}[V_{N}^{n}, v_{N}^{n}] \bigr)^{2}
		\bigl( \mathcal{Q}[\Phi_{\tau}^{I}(V_{N}^{n}), \Phi_{\tau}^{I}(v_{N}^{n})] \bigr)^{2}. \label{eq:remaining}
	\end{align}
	Substituting~\eqref{eq:inner-sum}--\eqref{eq:remaining} into~\eqref{eq:PhiII-sq}, we obtain
	\begin{align}
		\bigl\| \Phi_{\tau}^{II}(V_{N}^{n}) - \Phi_{\tau}^{II}(v_{N}^{n}) \bigr\|_{H^{\gamma}}^{2}
		\le \bigl( 1 + \widetilde{C}_{6}\,\tau \bigr) \| e_{v}^{n} \|_{H^{\gamma}}^{2},
	\end{align}
	where $\widetilde{C}_{6}$ encapsulates all quantities involving $\mathcal{Q}[\,\cdot\,{,}\,\cdot\,]$, which are bounded under the induction hypothesis and the step-size condition $\tau N^{2} \lesssim 1$. Taking the square root and using $\sqrt{1+z} \le 1 + \frac{1}{2}z$ for $z\ge0$ yields~\eqref{eq:stab-bound} with $C_{6} = \widetilde{C}_{6}/2$.
\end{proof}

\begin{proof}[Proof of Theorem~\ref{thm:RK2-convergence}]
	We proceed by an induction on $n$.  Suppose that
	\begin{align}
		\| v_{N}^{m} \|_{H^{\gamma}} \le M, \qquad 0 \le m \le n \le T/\tau - 1. \label{eq:ind-hyp}
	\end{align}
	By the isometry of $\fe^{\pm t\varepsilon^{2}\partial_{x}^{3}}$, $\|V_{N}^{m}\|_{H^{\gamma}} \le \|v(t_m)\|_{H^{\gamma}} = \|u(t_m)\|_{H^{\gamma}} \le M-1 < M$, so the induction hypothesis also covers the exact projection.
	
	Under~\eqref{eq:ind-hyp} and the condition $\tau N^{2} \lesssim 1$, the quantities $\mathcal{Q}[V_{N}^{n}, v_{N}^{n}]$ and $\mathcal{Q}[\Phi_{\tau}^{I}(V_{N}^{n}), \Phi_{\tau}^{I}(v_{N}^{n})]$ are bounded by constants depending only on $M$ and $k$.  Consequently, Lemma~\ref{lem:stability-RK} provides a constant $C_{6}>0$ such that
	\begin{align*}
		\bigl\| \Phi_{\tau}^{II}(V_{N}^{n}) - \Phi_{\tau}^{II}(v_{N}^{n}) \bigr\|_{H^{\gamma}}
		\le \bigl( 1 + C_{6}\,\tau \bigr) \| e_{v}^{n} \|_{H^{\gamma}}.
	\end{align*}
	Combining this with the error decomposition~\eqref{eq:error-decomp} and the local truncation estimate~\eqref{eq:local-error-bound} from Lemma~\ref{lem:local-error-RK}, we find
	\begin{align}
		\| e_{v}^{n+1} \|_{H^{\gamma}}
		&\le \bigl\| \Phi_{\tau}^{II}(V_{N}^{n}) - \Phi_{\tau}^{II}(v_{N}^{n}) \bigr\|_{H^{\gamma}}
		+ \bigl\| V_{N}^{n+1} - \Phi_{\tau}^{II}(V_{N}^{n}) \bigr\|_{H^{\gamma}} \notag \\
		&\le \bigl( 1 + C_{6}\,\tau \bigr) \| e_{v}^{n} \|_{H^{\gamma}}
		+ C_{5} \tau \left( \tau^{2} + N^{-m+1} \right). \label{eq:error-recur}
	\end{align} 
	For the initial error, using $u_{N}^{0} = P_{N} I_{2N} u_{0}$ and the isometry of the twisted-variable transformation,
	\begin{align}
		\| e_{v}^{0} \|_{H^{\gamma}}
		=&\, \bigl\| \fe^{t_{0}\varepsilon^{2}\partial_{x}^{3}}
		\bigl( P_{N}u(t_{0}) - u_{N}^{0} \bigr) \bigr\|_{H^{\gamma}}
		= \bigl\| P_{N}u(t_{0}) - P_{N}I_{2N}u(t_{0}) \bigr\|_{H^{\gamma}} \lesssim (2N)^{-m} \| u(t_{0}) \|_{H^{\gamma+m}}. \label{eq:init-error}
	\end{align}
	Applying the discrete Gr\"{o}nwall inequality to~\eqref{eq:error-recur} and
	using~\eqref{eq:init-error}, we get
	\begin{align*}
		\| e_{v}^{n+1} \|_{H^{\gamma}}
		&\le \Bigl( \| e_{v}^{0} \|_{H^{\gamma}}
		+ \tau \sum_{m=1}^{n+1} C_{5} \left( \tau^{2} + N^{-m+1} \right) \Bigr)
		\exp\bigl( C_{6} T \bigr) \le \widetilde{C}_{6} \bigl( \tau^{2} + N^{-m+1} \bigr),
	\end{align*}
	where $\widetilde{C}_{6}>0$ depends on $M$, $T$, but is independent of $\varepsilon$, $N$, and $\tau$.
	
	To close the induction, choose $\tau_{2}>0$ and $N_{2}>0$ such that
	$\widetilde{C}_{6}(\tau_{2}^{2} + N_{2}^{-m+1}) \le 1$.  Then for all
	$0<\tau\le\tau_{2}$ and $N\ge N_{2}$,
	\begin{align*}
		\| v_{N}^{n+1} \|_{H^{\gamma}}
		\le \| V_{N}^{n+1} \|_{H^{\gamma}} + \| e_{v}^{n+1} \|_{H^{\gamma}}
		\le (M-1) + 1 = M,
	\end{align*}
	which restores the induction hypothesis~\eqref{eq:ind-hyp} at step $n+1$.
	Thus, the estimate holds for all $0\le n \le T/\tau - 1$.
	Transforming back to the original variable,
	$\|u(\cdot,t_n) - u_N^n\|_{H^{\gamma}} = \|e_v^n\|_{H^{\gamma}}$ and
	\eqref{eq:RK2-error} follows. 
\end{proof}

\section{Numerical Experiments}\label{sec:5}

In this section, we present a series of numerical experiments to validate the theoretical analysis developed in the preceding sections.  Throughout, the spatial discretization is carried out using the Fourier pseudo-spectral method described in Section~\ref{sec:2}.  We fix $k = 5$, i.e., the $L^2$-critical case, for which the model is non-integrable and  challenging to solve numerically \cite{blowupDG,klein2015numerical}. The results of our concern for other values of $k$ remain essentially the same. 

\begin{figure}[t!]
	\centering
	\begin{subfigure}[b]{0.32\textwidth}
		\includegraphics[width=\linewidth]{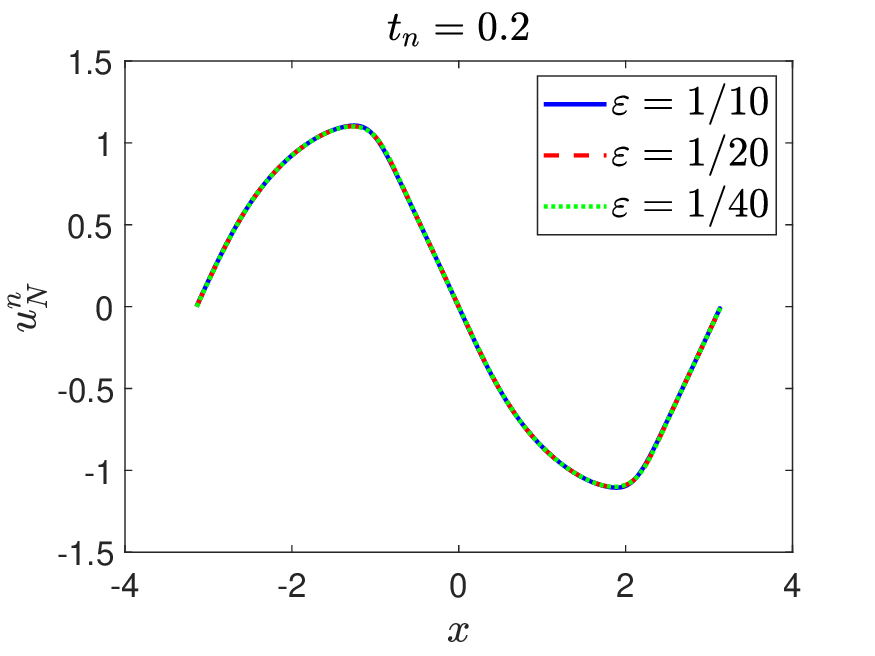}
	\end{subfigure}
	\begin{subfigure}[b]{0.32\textwidth}
		\includegraphics[width=\linewidth]{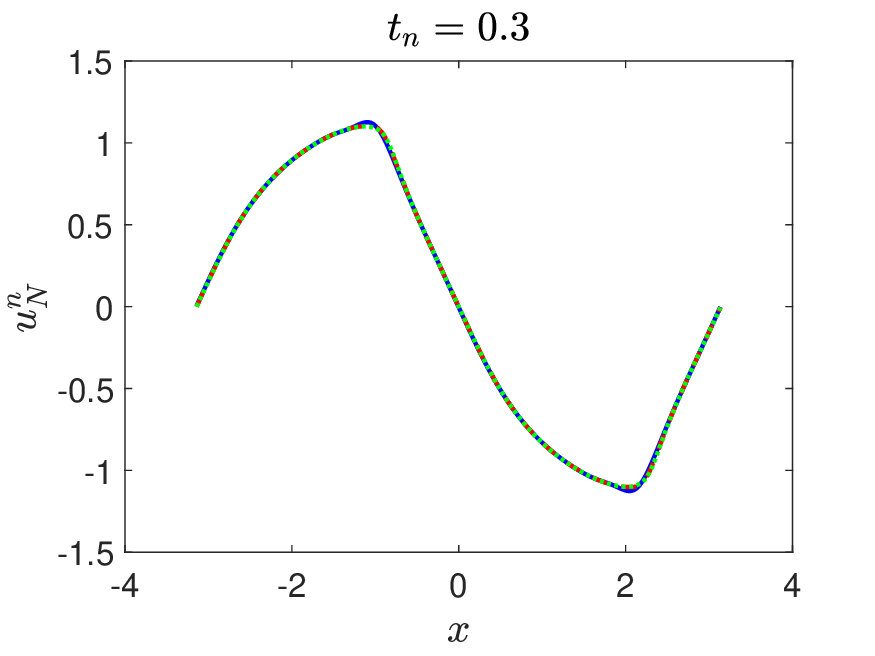}
	\end{subfigure} 
	\begin{subfigure}[b]{0.32\textwidth}
		\includegraphics[width=\linewidth]{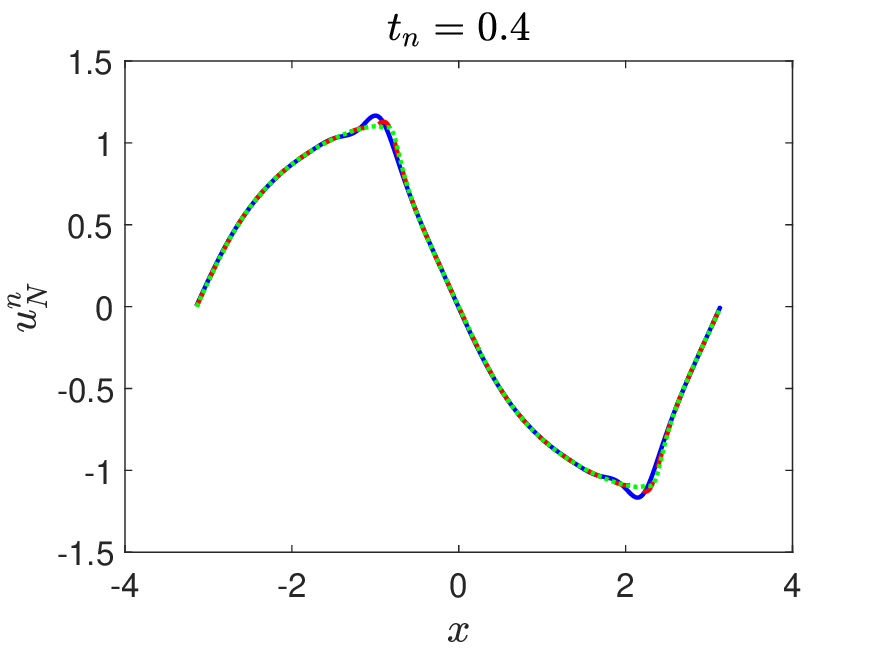}
	\end{subfigure} \\
	\begin{subfigure}[b]{0.32\textwidth}
		\includegraphics[width=\linewidth]{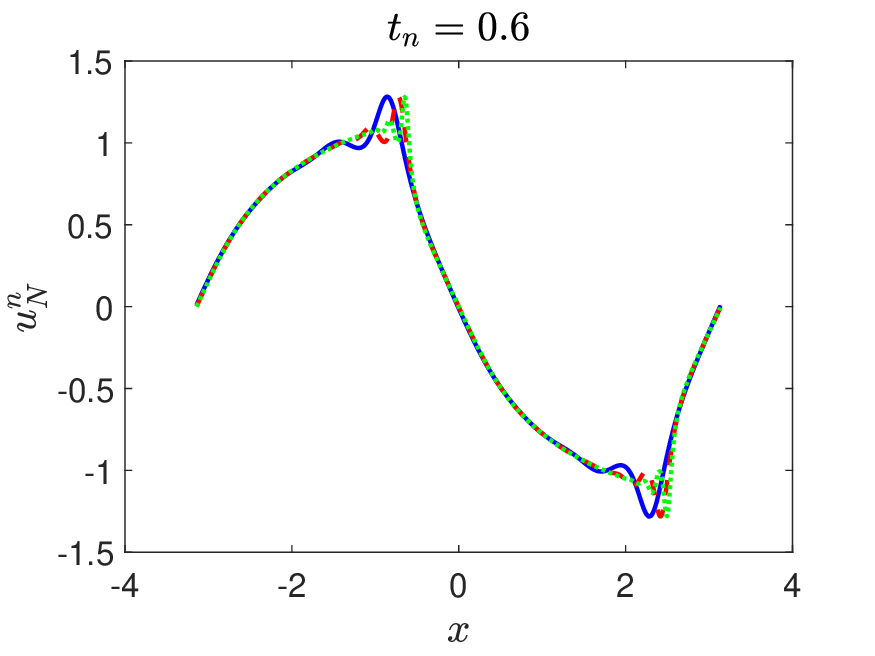}
	\end{subfigure} 
	\begin{subfigure}[b]{0.32\textwidth}
		\includegraphics[width=\linewidth]{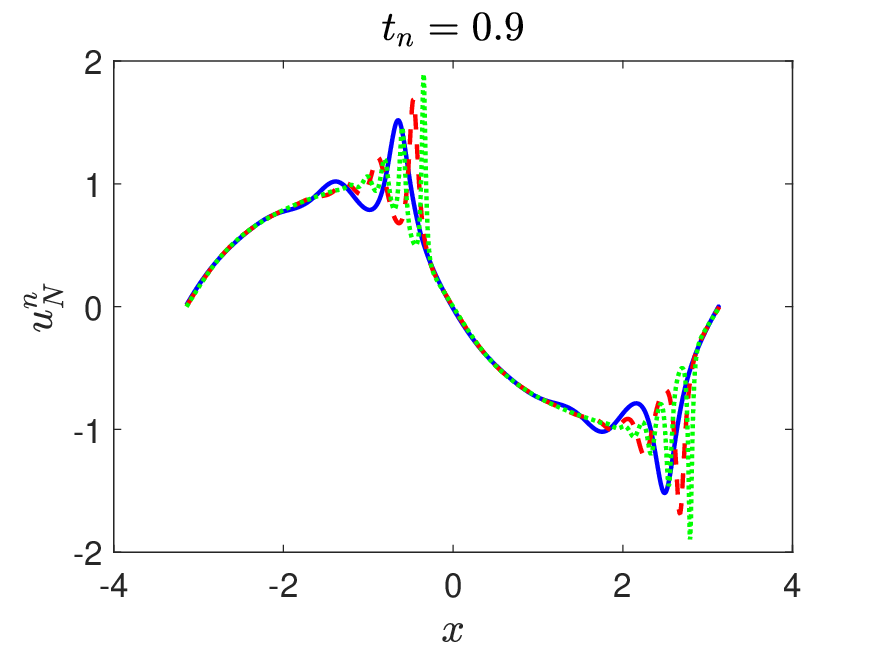}
	\end{subfigure}
	\caption{Snapshots of the numerical solution at different time instants, computed using the CN scheme with $\tau = 10^{-5}$, $N = 1024$, and $\gamma_{0}=2$, for $\varepsilon = 1/10$, $1/20$, and $1/40$.} \label{fig: slice Ex1}
\end{figure}

The initial data is taken as 
\begin{align*}
	u_{0}(x) = 1.1 \sin(x + \pi), \qquad x \in \mathbb{T} = (-\pi, \pi).
\end{align*}
For the fixed-point iteration~\eqref{eq: fixed-point iteration} employed in the CN scheme, the stopping criterion is set as
$
	\bigl\| u_{N,(i+1)}^{n+1} - u_{N,(i)}^{n+1} \bigr\|_{H^{\gamma_{0}}} <
	10^{-12},$ 
with the maximum number of iterations capped at $10^{5}$; if this threshold is exceeded, the iteration is deemed divergent and the computation is terminated.
We introduce the parameter $\gamma_{0}$ to denote the norm index used in the stopping criterion, distinguishing it from the norm index $\gamma$ employed in the error measurement and theoretical analysis.  In the experiments that follow, $\gamma_{0}$ takes different values depending on the computational setting.  For the computation of reference solutions and short-time error plots, we use $\gamma_{0} = 3$, which results in higher accuracy for the iterative solutions.  For long-time simulations extending beyond $t = 0.6$, however, the emergence of high-frequency oscillations associated with DSW formation may cause very large norm values that diverge; accordingly, we set $\gamma_{0} = 2$ in such cases to ensure robust convergence. 
Unless stated otherwise, the reference solution is computed using the CN scheme with $\tau = 10^{-5}$ and $N = 1024$.

Figure~\ref{fig: slice Ex1} displays the numerical solution at five representative time instants for three different values of $\varepsilon$.  The computation is performed with the CN scheme using a fine discretization ($\tau = 10^{-5}$, $N = 1024$, $\gamma_{0} = 2$).  For $t \le 0.3$, the numerical solution evolves smoothly and stably for all values of $\varepsilon$ considered. Starting from $t = 0.4$, however, high-frequency oscillations emerge and grow in amplitude, signaling the onset of DSW formation.  This observation is consistent with the existence of a critical time $t_{c}$ (here approximately $0.3 < t_{c} < 0.4$) beyond which the exact solution develops a gradient catastrophe in the dispersionless limit.

\begin{figure}[t!]
	\centering
	\begin{subfigure}[b]{0.45\textwidth}
		\includegraphics[width=\linewidth]{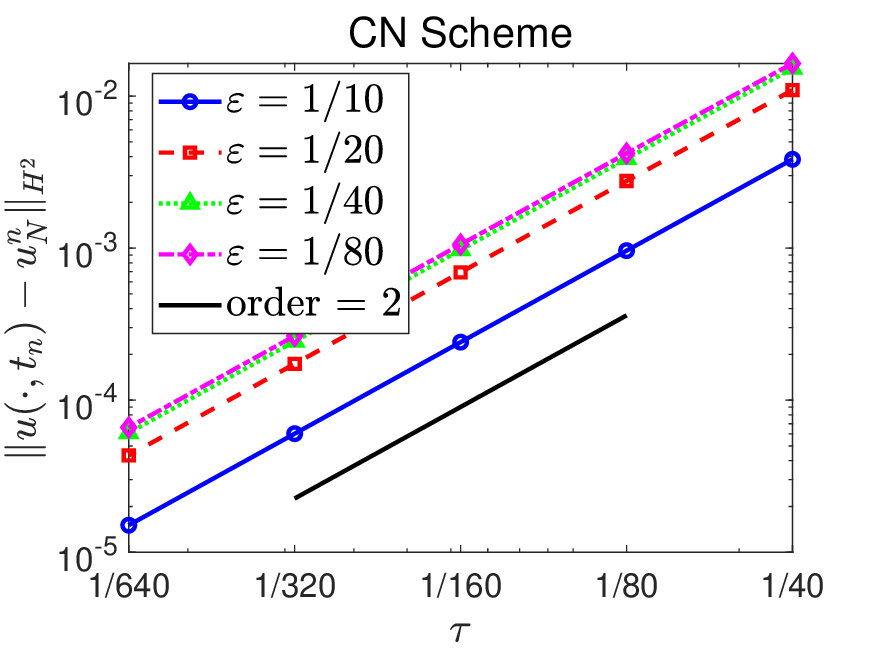}
	\end{subfigure}
	\begin{subfigure}[b]{0.45\textwidth}
		\includegraphics[width=\linewidth]{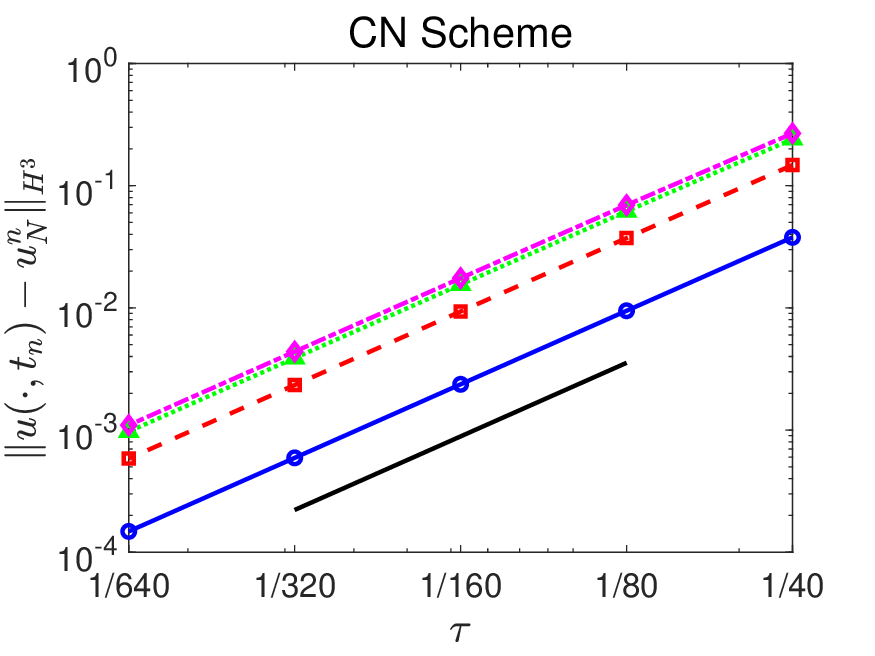}
	\end{subfigure} \\
	\begin{subfigure}[b]{0.45\textwidth}
		\includegraphics[width=\linewidth]{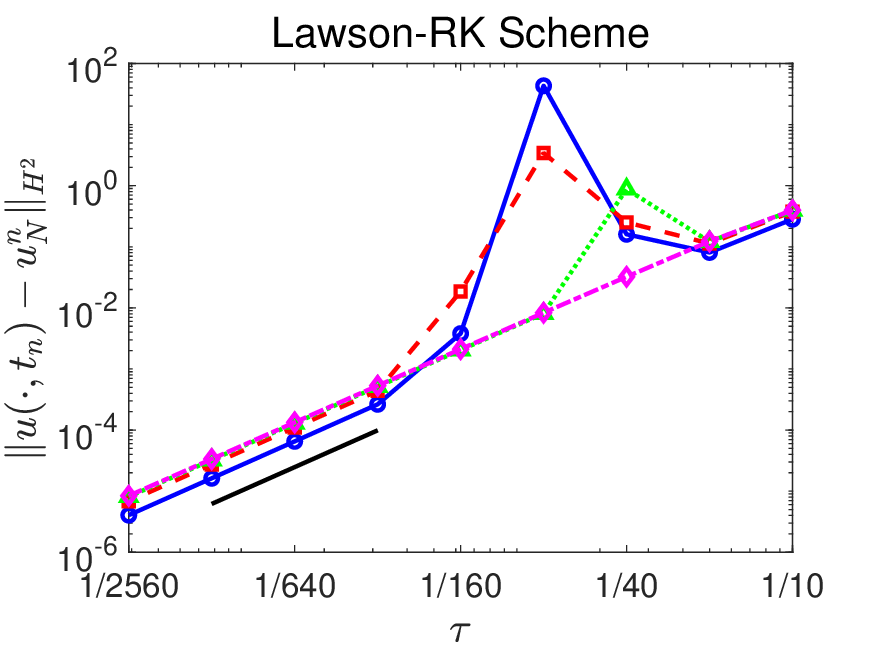}
	\end{subfigure}
	\begin{subfigure}[b]{0.45\textwidth}
		\includegraphics[width=\linewidth]{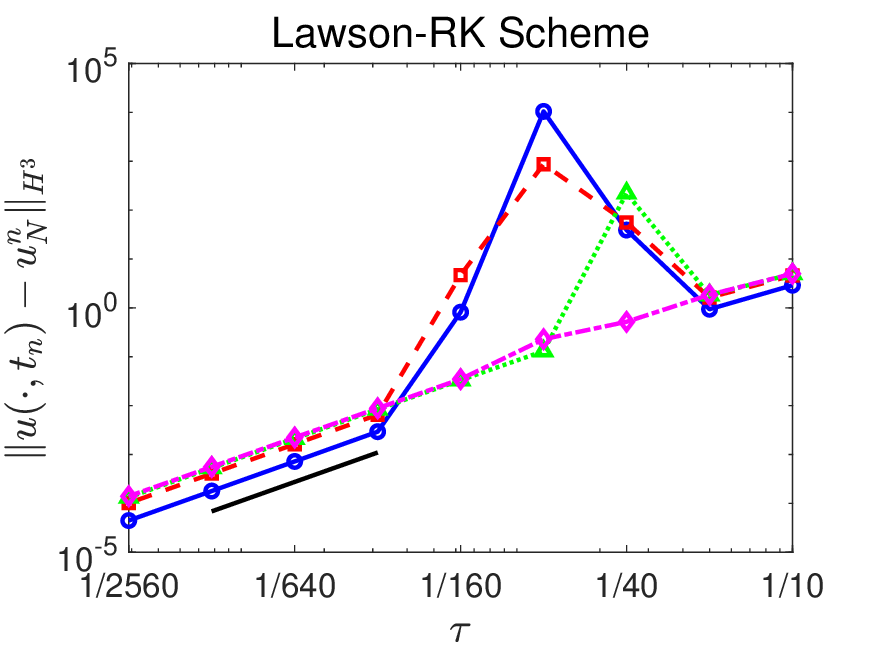}
	\end{subfigure}
	\caption{Temporal convergence: $\| u(\cdot,t_{n}) - u_{N}^{n} \|_{H^{\gamma}}$ at $t_{n} = 0.2$, with $N = 512$ and $\gamma_{0}=3$. Top row: CN scheme; bottom row: Lawson-RK scheme.  The black solid line indicates the reference rate $\mathcal{O}(\tau^{2})$.}
	\label{fig: temporal error Ex1}
\end{figure}

Figure~\ref{fig: temporal error Ex1} presents the temporal discretization errors in the $H^{2}$- and $H^{3}$-norms at $t_{n} = 0.2$, measured against the reference solution, with the spatial resolution fixed at $N = 512$.  For the CN scheme (top row), the error curves align closely with the second-order reference line across all tested values of $\varepsilon$, confirming that the $\mathcal{O}(\tau^{2})$ convergence rate established in Theorem~\ref{thm:CN-convergence} is sharp.  For the Lawson-RK scheme (bottom row), the second-order convergence behavior is observed only when $\tau$ falls below a certain threshold; for larger step sizes, the error deviates from the predicted rate.  Notably, this threshold is considerably less restrictive than the theoretical condition $\tau \lesssim N^{-2}$ required in Theorem~\ref{thm:RK2-convergence}, suggesting that the theoretical step-size restriction for the Lawson-RK scheme may have room for improvement in the analysis.

\begin{figure}[t!]
	\centering
	\begin{subfigure}[b]{0.45\textwidth}
		\includegraphics[width=\linewidth]{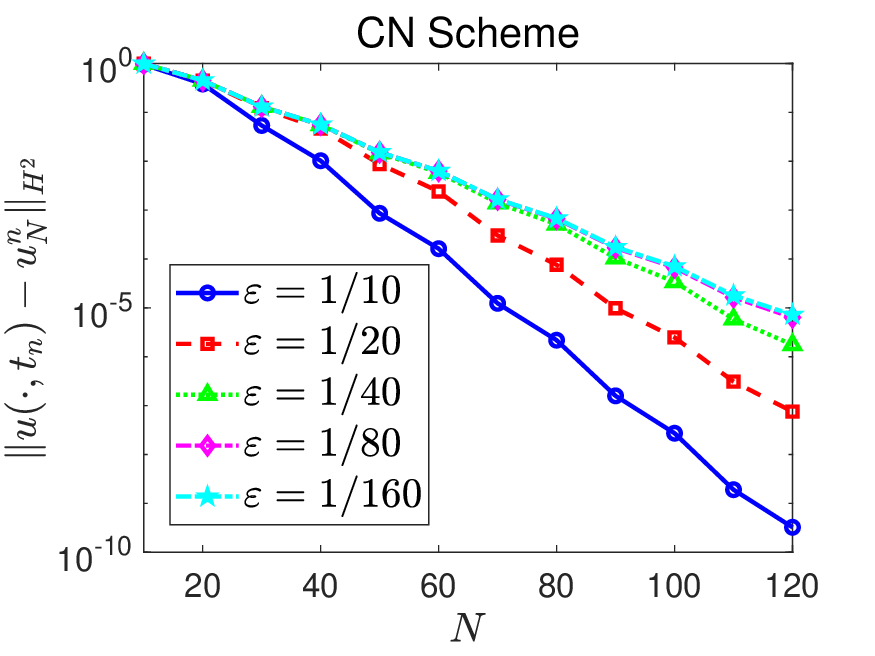}
	\end{subfigure}
	\begin{subfigure}[b]{0.45\textwidth}
		\includegraphics[width=\linewidth]{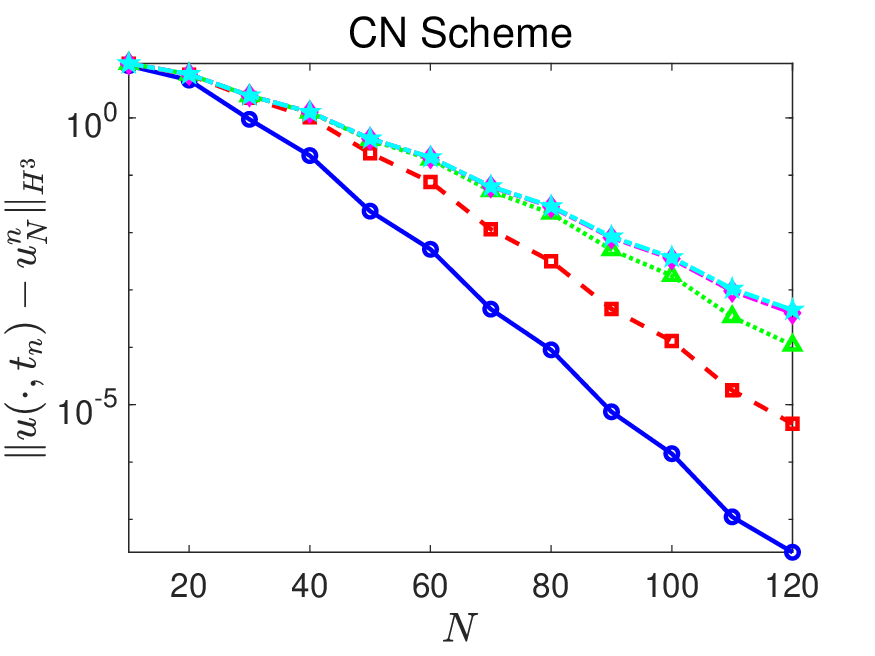}
	\end{subfigure} \\
	\begin{subfigure}[b]{0.45\textwidth}
		\includegraphics[width=\linewidth]{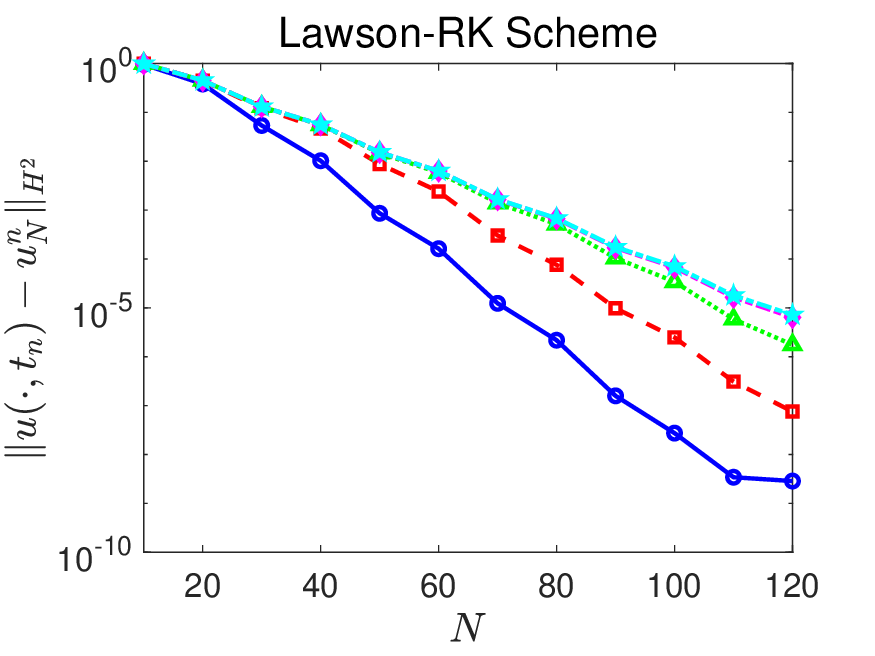}
	\end{subfigure}
	\begin{subfigure}[b]{0.45\textwidth}
		\includegraphics[width=\linewidth]{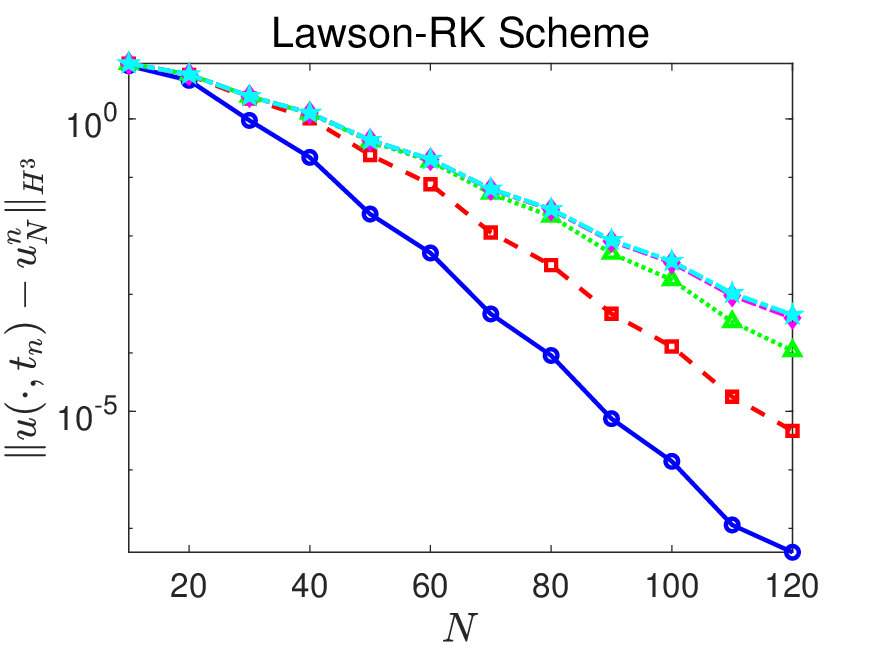}
	\end{subfigure}
	\caption{Spatial convergence: $\| u(\cdot,t_{n}) - u_{N}^{n} \|_{H^{\gamma}}$ at $t_{n} = 0.2$, with $\tau = 10^{-5}$ and $\gamma_{0}=3$.  Top row: CN scheme; bottom row: Lawson-RK scheme.} \label{fig: spatial error Ex1}
\end{figure}

Figure~\ref{fig: spatial error Ex1} shows the spatial discretization errors at $t_{n} = 0.2$ with the time step fixed at $\tau = 10^{-5}$ (so that the temporal error is negligible).  Both schemes exhibit spectral accuracy, as evidenced by the rapid decay of the error with increasing $N$.  A closer inspection reveals that the value of $\varepsilon$ exerts a mild influence on the rate of error decay: for larger values of $\varepsilon$, the error curves descend somewhat more steeply.  We attribute this behavior to the fact that $\varepsilon$ may affect the Sobolev norms of the exact solution, which in turn
modulate the prefactor in the spatial error bound.  Crucially, however, this influence saturates as $\varepsilon$ becomes sufficiently small, and for very small values of $\varepsilon$ the error curves are virtually indistinguishable.  Consequently, the spectral accuracy predicted by our theoretical analysis remains valid uniformly in $\varepsilon$.

\begin{table}[t!]
	\centering
	\caption{Step-size restriction test for the CN scheme: dependence of $\tau$ on $N$, with $\varepsilon = 1/512$, $T_{\max} = 0.5$, and $\gamma_{0}=3$.} \label{tab: iteration test 1 Ex1}
	\fontsize{10pt}{15pt}\selectfont
	\setlength{\tabcolsep}{10pt}
	\begin{tabular}{c|cccccc}
		\hline
		~ &$N = 16$ & $N = 32$ & $N = 64$ & $N = 128$ & $N = 256$ & $N = 512$ \\ \hline
		$\tau = 1/4$ & Y & N ($n = 0$) & N ($n = 0$) & N ($n = 0$) & N ($n = 0$) & N ($n = 0$) \\ 
		$\tau = 1/8$ & Y & N ($n = 0$) & N ($n = 0$) & N ($n = 0$) & N ($n = 0$) & N ($n = 0$) \\
		$\tau = 1/16$ & Y & Y & N ($n = 0$) & N ($n = 0$) & N ($n = 0$) & N ($n = 0$) \\
		$\tau = 1/32$ & Y & Y & Y & N ($n = 0$) & N ($n = 0$) & N ($n = 0$) \\ 
		$\tau = 1/64$ & Y & Y & Y & Y & N ($n = 0$) & N ($n = 0$) \\ 
		$\tau = 1/128$ & Y & Y & Y & Y & Y & N ($n = 0$) \\ \hline
	\end{tabular}
\end{table}

\begin{table}[!ht]
	\centering
	\caption{Step-size restriction test for the CN scheme: dependence of $\tau$ on $\varepsilon$, with $N = 512$, $T_{\max} = 0.5$, and $\gamma_{0}=3$.} \label{tab: iteration test 2 Ex1}
	\fontsize{10pt}{15pt}\selectfont
	\setlength{\tabcolsep}{10pt}
	\begin{tabular}{c|cccccc}
		\hline
		~ &$\varepsilon = 1/16$ & $\varepsilon = 1/32$ & $\varepsilon = 1/64$ & $\varepsilon = 1/128$ & $\varepsilon = 1/256$ & $\varepsilon = 1/512$ \\ \hline
		$\tau = 1/4$ & N ($n = 0$) & N ($n = 0$) & N ($n = 0$) & N ($n = 0$) & N ($n = 0$) & N ($n = 0$) \\ 
		$\tau = 1/8$ & Y & N ($n = 0$) & N ($n = 0$) & N ($n = 0$) & N ($n = 0$) & N ($n = 0$) \\ 
		$\tau = 1/16$ & Y & Y & N ($n = 0$) & N ($n = 0$) & N ($n = 0$) & N ($n = 0$) \\ 
		$\tau = 1/32$ & Y & Y & Y & N ($n = 0$) & N ($n = 0$) & N ($n = 0$) \\ 
		$\tau = 1/64$ & Y & Y & Y & N ($n = 30$) & N ($n = 0$) & N ($n = 0$) \\ 
		$\tau = 1/128$ & Y & Y & Y & Y & N ($n = 0$) & N ($n = 0$) \\ \hline
	\end{tabular}
\end{table}

We next examine the convergence behavior of the fixed-point iteration for the CN scheme.  Tables~\ref{tab: iteration test 1 Ex1} and~\ref{tab: iteration test 2 Ex1} report, respectively, the dependence of the admissible time step $\tau$ on the spatial discretization parameter $N$ (with $\varepsilon = 1/512$ fixed) and on the dispersion parameter $\varepsilon$ (with $N = 512$ fixed). In these tables, ``Y'' indicates that the fixed-point iteration converges within the prescribed maximum number of iterations throughout the entire time interval $[0, T_{\max}]$ with $T_{\max} = 0.5$, whereas ``N($n = n^{*}$)'' indicates that the scheme diverges at time level $n^{*}$, i.e., the iteration exceeds the maximum allowable count.  A clear pattern emerges: the admissible time step $\tau$ is essentially constrained linearly by both $N$ and $\varepsilon$ in a first-order manner, which is in good qualitative agreement with the theoretical step-size condition $\tau \lesssim \min\{ \varepsilon^{1+\sigma_{0}},\; N^{-(1+\sigma_{0})} \}$ for any $\sigma_{0} > 0$ derived in Lemma~\ref{lem:contraction-map}.



\begin{figure}[h!]
	\centering
	\begin{subfigure}[b]{0.32\textwidth}
		\includegraphics[width=\linewidth]{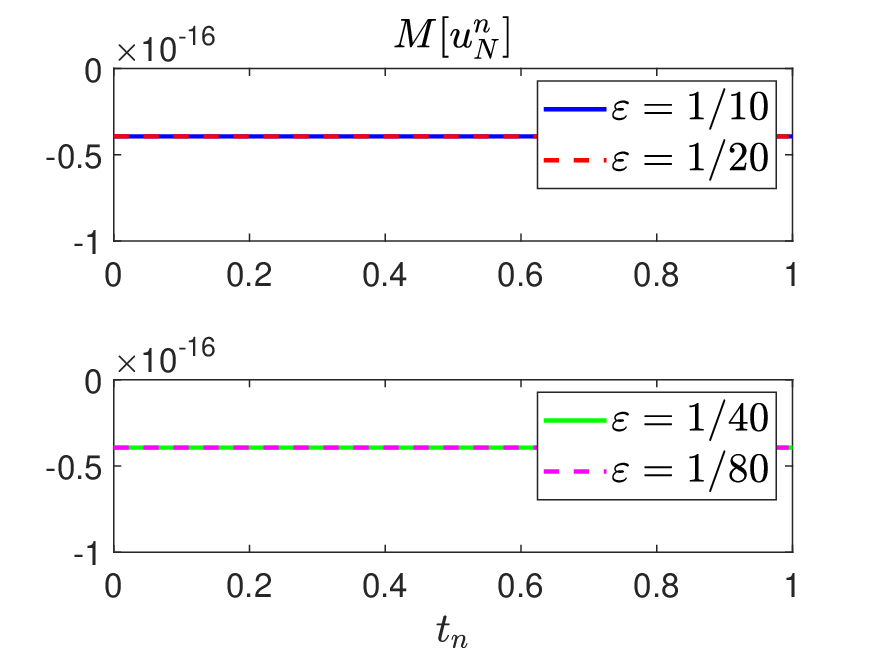}
	\end{subfigure}
	\begin{subfigure}[b]{0.32\textwidth}
		\includegraphics[width=\linewidth]{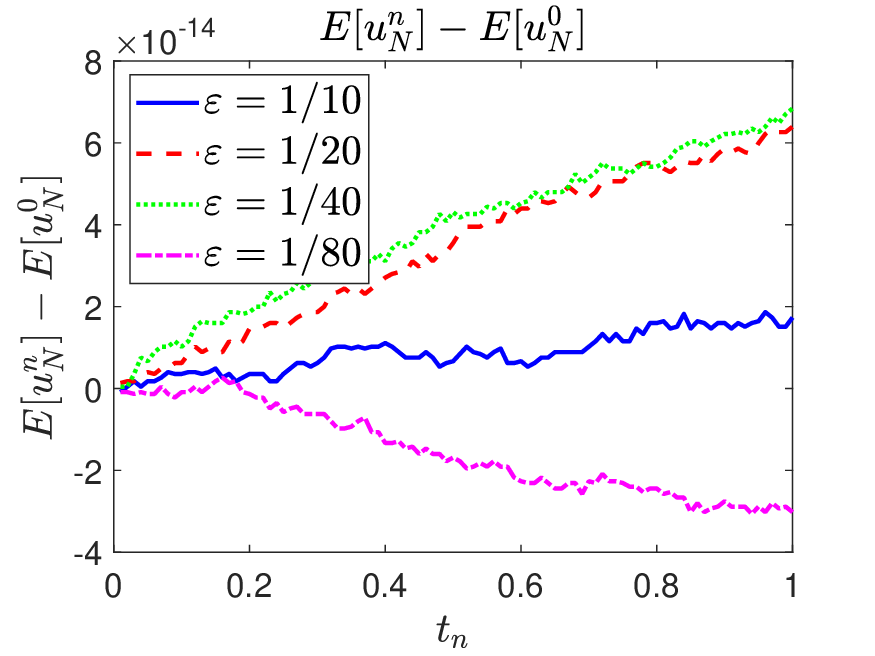}
	\end{subfigure}
	\begin{subfigure}[b]{0.32\textwidth}
		\includegraphics[width=\linewidth]{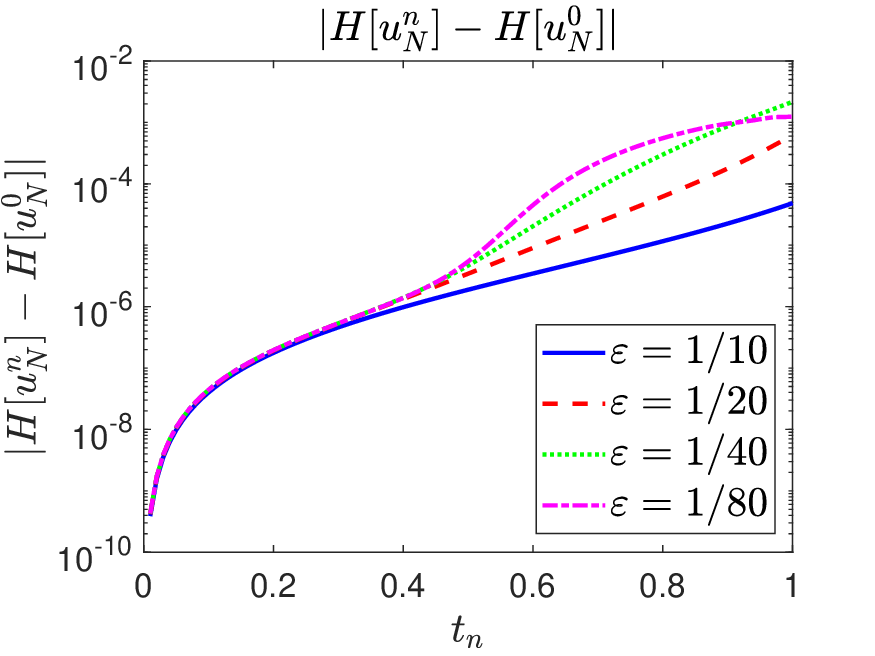}
	\end{subfigure}
	\caption{Evolution of the conserved quantities for the CN scheme, with $\tau = 0.01$, $N = 256$, and $\gamma_{0}=2$.  From left to right: mass $M[u]$, energy $E[u]$, and Hamiltonian $H[u]$.} \label{fig: conservation Ex1}
\end{figure}

\begin{figure}[h!]
	\centering
	\begin{subfigure}[b]{0.32\textwidth}
		\includegraphics[width=\linewidth]{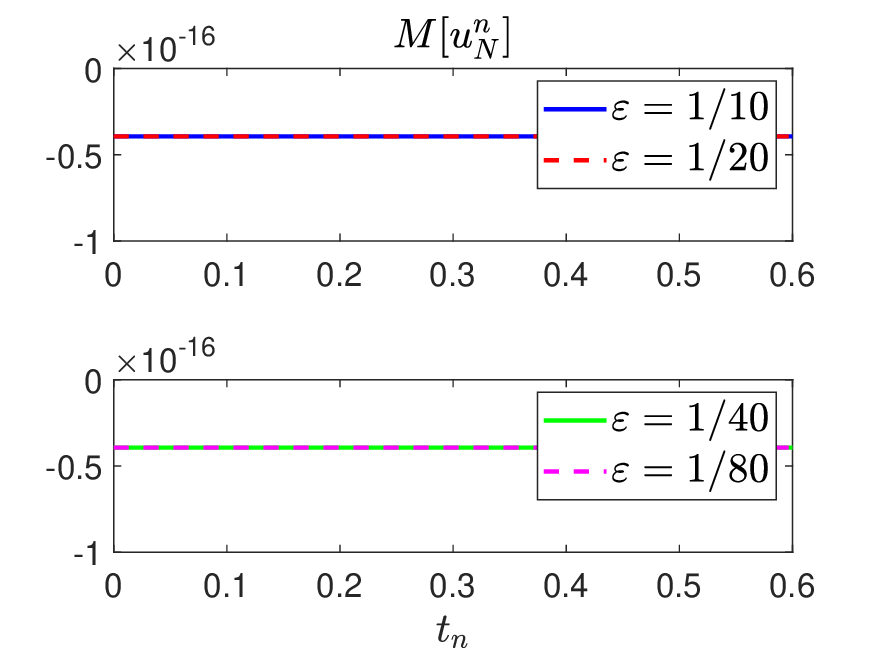}
	\end{subfigure}
	\begin{subfigure}[b]{0.32\textwidth}
		\includegraphics[width=\linewidth]{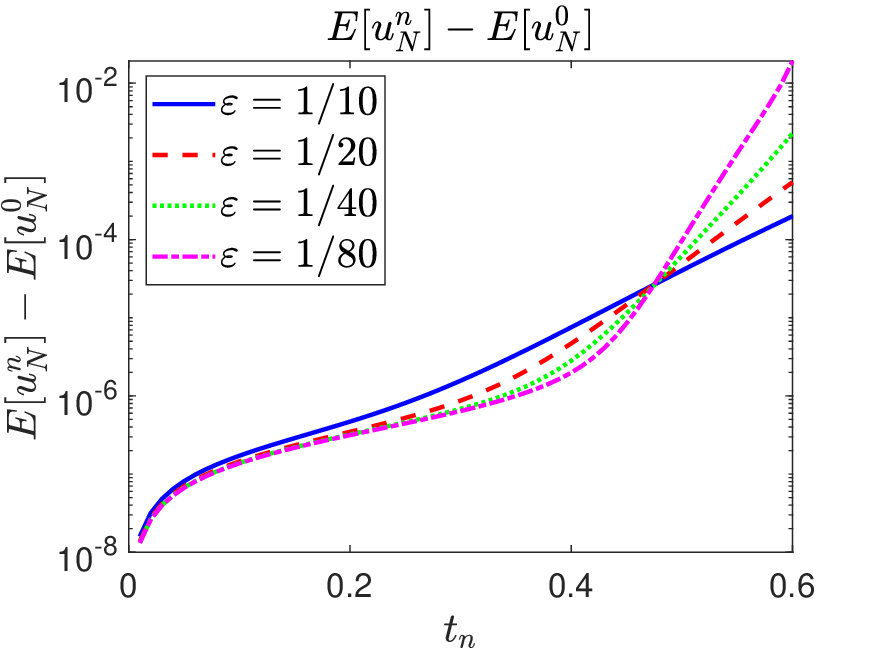}
	\end{subfigure}
	\begin{subfigure}[b]{0.32\textwidth}
		\includegraphics[width=\linewidth]{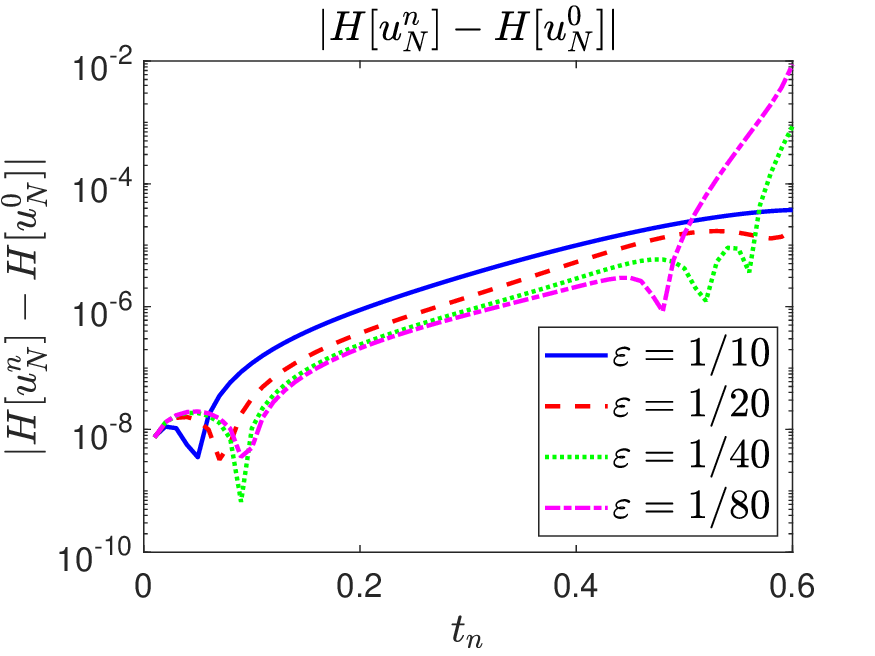}
	\end{subfigure}
	\caption{Evolution of the conserved quantities for the Lawson-RK scheme, with $\tau = 0.01$, $N = 256$, and $\gamma_{0}=2$.  From left to right: mass $M[u]$, energy $E[u]$, and Hamiltonian $H[u]$.} \label{fig: conservation Ex1 for RK2}
\end{figure}

Finally, we examine the preservation of the three invariants---mass $M[u]$, energy $E[u]$, and Hamiltonian $H[u]$---for both schemes, with the discretization parameters fixed at $\tau = 0.01$, $N = 256$, and $\gamma_{0} = 2$.  Figure~\ref{fig: conservation Ex1} shows the results for the CN scheme. As guaranteed by Lemma~\ref{lem: conservative law}, the discrete mass and energy are preserved exactly (up to machine precision) throughout the entire simulation. The numerical value of the Hamiltonian is not preserved. Especially after $t_c$, the drift is more significant, meaning that the performance of the scheme is deteriorating due to the occurrence of multiscale phenomenon. Figure~\ref{fig: conservation Ex1 for RK2} presents the corresponding results for the Lawson-RK scheme.  Here, only the mass is conserved exactly, while both the energy and the Hamiltonian deviate from their initial values, in accordance with the theoretical expectation. In particular, things also deteriorate after $t_c$. 

\begin{figure}[h!]
	\centering
	\begin{subfigure}[b]{0.45\textwidth}
		\includegraphics[width=\linewidth]{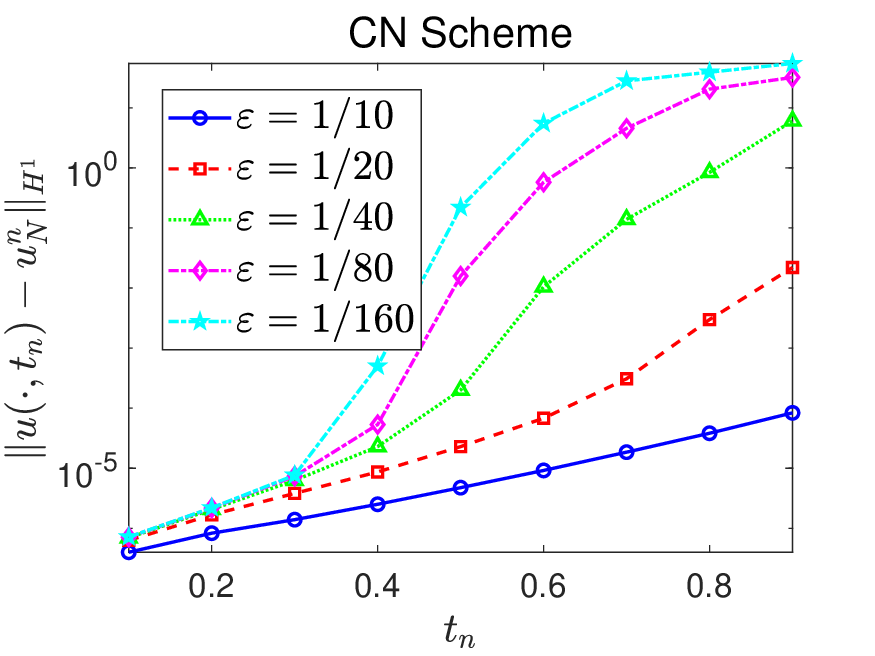}
	\end{subfigure}
	\begin{subfigure}[b]{0.45\textwidth}
		\includegraphics[width=\linewidth]{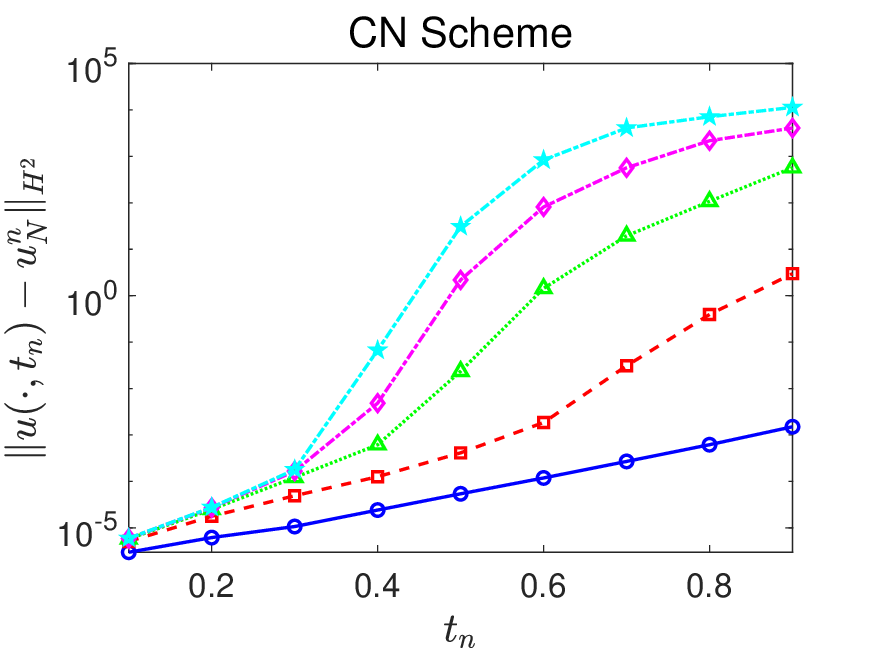}
	\end{subfigure} \\
	\begin{subfigure}[b]{0.45\textwidth}
		\includegraphics[width=\linewidth]{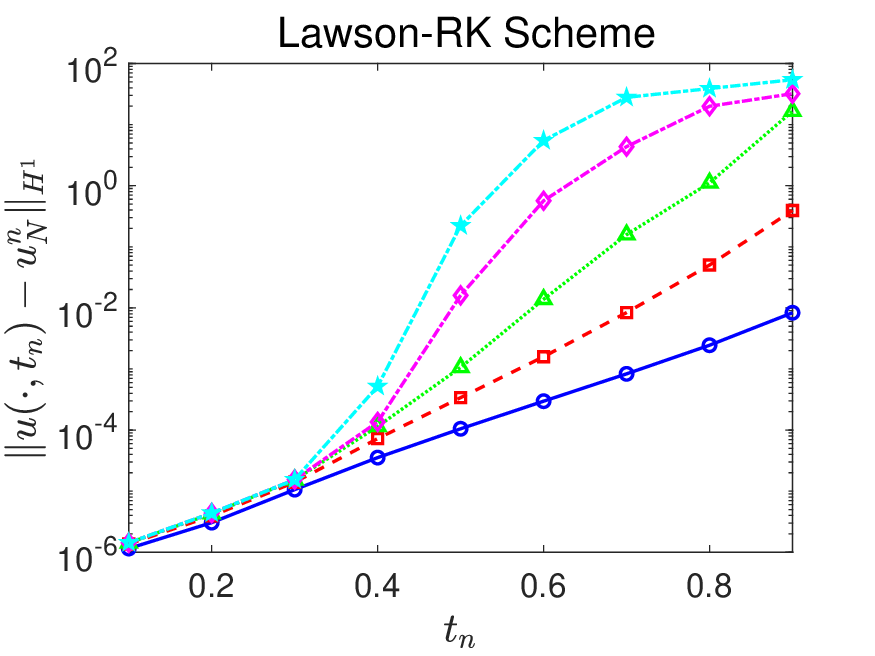}
	\end{subfigure}
	\begin{subfigure}[b]{0.45\textwidth}
		\includegraphics[width=\linewidth]{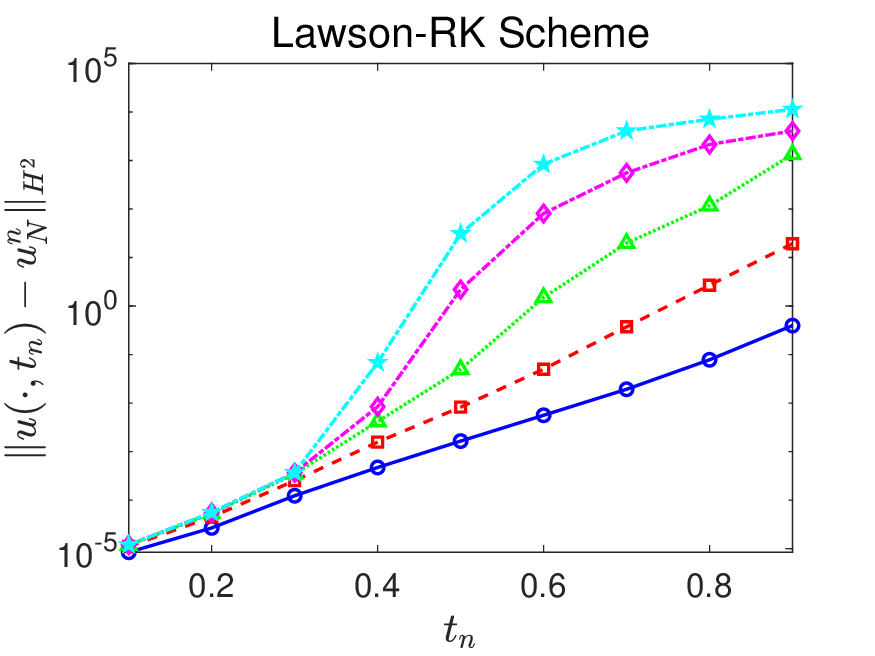}
	\end{subfigure}
	\caption{Error evolution over the full time interval: $\| u(\cdot,t_{n}) - u_{N}^{n} \|_{H^{\gamma}}$ for $t_{n} = 0.1, 0.2, \dots, 0.9$, with $\tau = 0.001$, $N = 256$, and $\gamma_{0}=2$.  Top row: CN scheme; bottom row: Lawson-RK scheme.  The reference solution is the CN numerical solution used $\gamma_{0}=2$.} \label{fig: error evolution Ex1}
\end{figure}

All the convergence tests presented above were conducted at $t_{n} = 0.2$, which lies safely before the critical time $t_{c}$.  To illustrate what happens if one attempts to push the computation beyond $t_{c}$, we plot in Figure~\ref{fig: error evolution Ex1} the $H^{1}$- and $H^{2}$-errors as functions of $t_{n}$ over the extended interval $t_{n} \in [0.1, 0.9]$, for $\varepsilon = 1/10$, $1/20$, $1/40$, $1/80$, and $1/160$.  For $t_{n} \le 0.3$, the error increases only linearly with $t_{n}$ on the logarithmic vertical axis (equivalently, exponentially in time), and the influence of $\varepsilon$ is mild; moreover, this influence saturates as $\varepsilon \to 0$, consistent with the $\varepsilon$-uniform estimates established in Theorems~\ref{thm:CN-convergence} and~\ref{thm:RK2-convergence}. Starting from $t_{n} = 0.4$, however, a dramatic change occurs: as $\varepsilon$ decreases, the error grows substantially, indicating that the numerical solution can no longer reliably approximate the exact solution in the post-$t_{c}$ regime under the current discretization strategy.  This observation explains why our convergence analysis is restricted to times $T < t_{c}$: beyond the critical time, the solution of \eqref{eq: gKdV} becomes highly oscillatory in both time and space with oscillating frequencies inversely dependent on $\varepsilon$, making classical numerical approximations inaccurate and  inefficient. This, on the other hand, calls for developments of multiscale techniques and analysis that would be the direction of our future work.


\section{Conclusion}\label{sec:con} 
This work considers numerical analysis for the generalized Korteweg-de Vries (gKdV) equation in the dispersionless limit regime before the formation of dispersive shock waves (DSW). Two classical schemes---the Crank-Nicolson (CN) scheme and a Lawson-type Runge-Kutta (Lawson-RK) scheme, both combined with the Fourier pseudo-spectral method in space---are studied. 
The Lawson-RK scheme, offering an explicit time-stepping approach,  preserves discrete mass, while the implicit CN scheme conserves both discrete mass and energy. Our primary contribution lies in the rigorous proof that both of these fully discrete schemes achieve optimal second-order temporal accuracy and spectral spatial accuracy. Crucially, the derived error constants are independent of the dispersion parameter $\varepsilon$, ensuring reliable convergence as $\varepsilon$ approaches zero. 
For the CN scheme, we establish its unique solvability at each time step, facilitated by a fixed-point iteration algorithm whose convergence is uniform with respect to $\varepsilon$, subject to a specific step-size restriction. Numerical experiments presented herein serve to validate our theoretical findings. These experiments confirm the sharp temporal and spatial convergence rates with the $\varepsilon$-uniform accuracy and support the necessity of the theoretical step-size restriction on stability of the fixed-point iteration for the CN scheme. 
Computations beyond the critical time for the occurrence of DSW are also explored, where the performance of the schemes quickly deteriorates. 
 Our findings in total 
give credit to classical methods before DSW and leave the demand for  sophisticated multiscale numerical techniques afterwards,  
 forming an important step towards understanding and reliably simulating the gKdV equation in the dispersionless limit regime.

\section*{Acknowledgment}
T. Wang and X. Zhao are supported by National Key Research and Development Program of China, National MCF Energy R\&D Program (No. 2024YFE03240400), NSFC 42450275, 12271413. B. Li is supported by the National Natural Science Foundation of China (Grant No. 12401308) and the Natural Science Foundation of Sichuan Province (Grant No. 2026NSFSC0749). 

\bibliographystyle{siam}
\bibliography{ref}

\end{document}